\PassOptionsToPackage{dvipsnames}{xcolor}
\documentclass[a4paper,english]{jnsao} %
\KOMAoption{parskip}{half-}
\usepackage[utf8]{inputenc}
\usepackage[T1]{fontenc}
\usepackage{bbm}
\usepackage{float}
\usepackage{algpseudocode}
\usepackage{enumitem}
\usepackage{mathtools}
\usepackage[nameinlink,capitalize]{cleveref}
\usepackage{pgfplots}
\usepackage{pgfplotstable}
\usepackage{booktabs}
\pgfplotsset{compat=newest}

\usepgfplotslibrary{colorbrewer}

\manuscriptcopyright{}
\manuscriptlicense{}
\manuscripteprinttype{arXiv}
\manuscripteprint{}

\usepackage[margin=2.1cm]{geometry}

\newcommand{\N}{\mathbb{N}}
\newcommand{\norm}[1]{\|#1\|}
\newcommand{\abs}[1]{|#1|}
\newcommand{\defeq}{:=}
\DeclareMathOperator{\prox}{prox}
\DeclareMathOperator{\proj}{proj}

\theoremstyle{definition}
\newtheorem{assumption}[theorem]{Assumption}
\crefname{assumption}{Assumption}{Assumptions}
\floatstyle{ruled}
\floatname{algorithm}{Algorithm}
\newfloat{algorithm}{tbp!}{loa}
\crefname{algorithm}{Algorithm}{Algorithms}

\AddToHook{env/proposition/begin}{\crefalias{theorem}{proposition}}
\AddToHook{env/lemma/begin}{\crefalias{theorem}{lemma}}
\AddToHook{env/corollary/begin}{\crefalias{theorem}{corollary}}
\AddToHook{env/definition/begin}{\crefalias{theorem}{definition}}
\AddToHook{env/assumption/begin}{\crefalias{theorem}{assumption}}
\AddToHook{env/remark/begin}{\crefalias{theorem}{remark}}
\AddToHook{env/algorithm/begin}{\crefalias{theorem}{algorithm}}

\algrenewcommand{\algorithmiccomment}[1]{\hfill$\rightsquigarrow$\ {\itshape #1}}

\author{\texorpdfstring{%
        Christian Clason\thanks{Department of Mathematics and Scientific Computing, NAWI Graz, University of Graz, Austria, \email{c.clason@uni-graz.at}, \orcid{0000-0002-9948-8426}}
        \and
        Alberto Dom\'inguez Corella\thanks{Department of Mathematics and Scientific Computing, NAWI Graz, University of Graz, Austria, \email{alberto.dominguez-corella@uni-graz.at}, \orcid{0000-0002-2269-8070}}
        \and
        Jyrki Jauhiainen\thanks{Department of Mathematics and Scientific Computing, NAWI Graz, University of Graz, Austria, \email{jyrki.jauhiainen@uni-graz.at}, \orcid{0000-0001-6711-6997}}
    }{%
        Christian Clason; Alberto Domínguez Corella; Jyrki Jauhiainen%
}}
\shorttitle{Observability as bilevel constraint qualification}
\shortauthor{Clason, Domínguez Corella, Jauhiainen}

\title{Observability as a constraint qualification for a nonsmooth bilevel PDE-constrained problem}

\acknowledgements{%
    This research was funded in whole or in part by the Austrian Science
    Fund (FWF) \href{https://dx.doi.org/10.55776/F100800}{10.55776/F100800}.
}

\begin{document}

\maketitle

\begin{abstract}
    Motivated by experiment design for parameter identification in image reconstruction, we analyze a bilevel control problem involving sparse inversion at the lower level. Qualified optimality conditions in explicit multiplier form are shown to hold under either an observability condition for a coupled system or the Lebesgue nullity of a contact set induced by the lower-level adjoint variable. We prove that these conditions are independent of one another and that they cannot, in general, both be omitted. The approach is based on the characterization of local first-order geometric objects (tangent and normal cones) associated with a set of lifted constraints. Based on the resulting optimality conditions, we formulate a reduced gradient method and illustrate it through numerical experiments.

    \paragraph{Keywords:}
    bilevel control, PDE-constrained optimization,
    nonsmooth optimization, observability, constraint qualifications.

    \paragraph{2020 Mathematics Subject Classification:}
    49\textsc{k}20 (primary); 49\textsc{j}52, 49\textsc{j}53, 93\textsc{b}07 (secondary).
\end{abstract}

\tableofcontents

\newpage

\section{Introduction, main  contributions and state of the art}

The aim of this paper is to analyze an experiment design problem for parameter identification as a PDE-constrained bilevel optimization problem. To set the stage, consider the prototypical initial value problem:
\begin{equation}\label{eq:intro:pde}
    \left\{
        \begin{aligned}
            \partial_t y-\Delta y+q(t)\,y &= 0,\\
            y(0) &= u(x),
        \end{aligned}
    \right.
\end{equation}
together with suitable boundary conditions. Equation \eqref{eq:intro:pde} is sometimes referred to as the bio-heat equation, where the bilinear term $qy$ models the temperature transport due to blood diffusion; see, e.g., \cite{trucu2009bioheat}. We are primarily interested in equation \eqref{eq:intro:pde} not for its physical interpretation, but because it serves as a tractable model that captures the following structural features:
\begin{itemize}
    \item a smoothing effect induced by the diffusion term $-\Delta y$, which makes initial-data recovery ill-posed;
    \item the coupling of a spatially dependent quantity $u(x)$ with a time-dependent coefficient $q(t)$;
    \item the bilinear zero-order term $q(t)y$, through which the time-dependent coefficient acts on the state.
\end{itemize}
These features also appear in more structured models, such as the
Bloch--Torrey equations underlying magnetic resonance imaging (MRI), in which
diffusion is coupled with lower-order terms describing relaxation and
magnetic-field-induced precession effects \cite{PhysRev.104.563}.

The parameter identification problem considered here is the recovery of the
initial condition \(u\) from noisy measurements of the state \(y(t)\) at
positive times for a fixed choice of the time-dependent coefficient \(q\).
This is a linear inverse problem; however, due to the smoothing effect of the
parabolic equation, the forward mapping \(u\longmapsto y(u)\) is compact in
\(L^2\), with singular values that decay exponentially. This makes the
parameter identification exponentially ill-posed in the sense that any error
in the measurement necessarily leads to an exponential error in the
reconstruction, making any practical solution impossible. One therefore resorts to a (Tikhonov) regularization of the problem, which amounts to solving
\begin{equation}\label{eq:intro:lower}
    \min_{y,u} \frac12 \int_{0}^{T}\!\!\int_{\Omega} |y-y_d|^{2}\,dx\,dt
    +\frac{\lambda}{2}\int_{\Omega}|u|^{2}\,dx
    +\mu\int_{\Omega}|u|\,dx\quad\text{subject to \eqref{eq:intro:pde}.}
\end{equation}
The datum $y_d$ is a noisy measurement of the
state corresponding to the true initial condition $u^\dagger$, i.e.,
$y_d = y(u^\dag) + \eta$, where $\eta$ is normally distributed with mean zero
and variance $\delta^2$. The standard deviation $\delta>0$ represents the noise level. The
parameters $\lambda,\mu>0$ are regularization parameters that have to be chosen suitably in relation to the noise level. The theory and numerical solution of such Tikhonov-regularized inverse problems are well understood;
see, e.g., \cite{Engl,ItoJin}. In particular, the third term is known to promote ``sparsity'' in the reconstruction in the sense that the minimizing parameter $u(q)$ is zero on a set whose Lebesgue measure is controlled by the value of $\mu>0$, which serves to efficiently suppress background noise in the reconstruction.

The regularized inverse problem \eqref{eq:intro:lower} serves here as a model problem whose main feature is the nonsmoothness introduced by the
sparsity-promoting term. Our focus is not the reconstruction problem for a
fixed coefficient \(q\) but rather the choice of \(q\) itself. One  seeks to choose the control parameter $q$ such that the corresponding
lower-level solution \(u(q)\) is as close as possible to the true initial
condition \(u^\dagger\). This is an instance of experiment design; see, e.g.,
\cite{Pukelsheim:2006}. Classical approaches to experiment design are based on sensitivities, i.e.,
spectral quantities associated with the linearized mapping $u\longmapsto y(u)$, but these fail to take nonsmooth regularization into account. The approach we propose is to instead treat this as a bilevel optimization
problem, i.e., to compute $q$ as the solution of the upper-level problem
\begin{equation}\label{eq:intro:upper}
    \min_q \frac12 \int_\Omega |u(q) - u^\dag|^2 \,dx + \frac{\alpha}2  \int_0^T |q|^2\, dt\quad\text{subject to \eqref{eq:intro:lower}}.
\end{equation}
Such nested optimization problems involving differential equations are referred to as \emph{bilevel control
problems}; see, e.g., the survey papers \cite{Dempe:2026,Dempe:2022}.
Note that, in an experiment design setting, the datum $y_d$ in
\eqref{eq:intro:lower} should also depend on $q$. Here, however, we can use
the linearity of the mapping $u\longmapsto y(u)$ to consider instead the
difference $\hat y := y(u) - y(u^\dagger) = y(u-u^\dagger)$, i.e., to replace $y$
by $\hat y$ and $y_d$ by $\eta$ in \eqref{eq:intro:lower}; see
\cref{sec:numerics} below. In any case, this does not materially affect the theoretical analysis of the problem~\eqref{eq:intro:upper}. Since this is the main focus of this work, we use the simpler formulation for the sake of clarity.

Bilevel control problems are in general known to be difficult due to the fact that they require solving a full optimal control problem just to evaluate the effect of one parameter, which leads to a substantial increase in computational complexity. Problem \eqref{eq:intro:upper} poses even greater difficulties due to the nondifferentiability of the lower level, caused by the sparsity-promoting $L^1$-term. Bilevel control problems with nonsmooth lower levels have not been treated so far in the literature, since no known approaches in the field apply or are tractable in this setting. Most approaches in bilevel optimization typically rely on differentiability of the lower-level solution map or the value function, and therefore cannot be applied to problems such as \eqref{eq:intro:upper}.

The main obstruction in \eqref{Bip} is already visible in the lower-level
optimality system. The sparsity term yields the pointwise soft-thresholding
relation $u
=
\operatorname{prox}_{(\mu/\lambda)|\cdot|}
\left(-\frac1\lambda\varphi(0)\right)$,
where \(\varphi\) is the lower-level adjoint variable. Thus, the lower-level optimal
control is determined by the position of \(\varphi(0)\) relative to the
thresholds \(\pm\mu\). In particular, the regions
\(\{|\varphi(0)|<\mu\}\) and \(\{|\varphi(0)|>\mu\}\) correspond,
respectively, to the zero and nonzero regimes of the sparse reconstruction,
while the critical set
\[
    \{|\varphi(0)|=\mu\}
\]
is the switching set at which the soft-thresholding map changes regime. This
set plays a role analogous to a free boundary, or contact set, in
obstacle-type variational inequalities; it marks the interface where the
pointwise optimality system ceases to be locally smooth. A perturbation of the
upper-level coefficient \(q\) changes the state and adjoint equations, and may
therefore also move the active, inactive, and contact regions that determine
the lower-level solution. This is the main reason why standard arguments based on taking the first variation fail to yield
necessary optimality conditions.

This paper is devoted to the analysis of the bilevel problem \eqref{eq:intro:upper}
by means of tools from variational analysis, and in particular from
variational geometry. The main objective is to derive qualified necessary optimality conditions
for local minimizers of \eqref{eq:intro:upper} in multiplier form. Such explicit characterizations do not hold in
general; they require a constraint qualification at the solution, which allows
one to separate the influence of the various constraints on the optimal
solution. 

The main contributions of the manuscript can be described as follows.

First, we identify the graph of the lower-level solution mapping as the
appropriate lifted constraint set for the variational-geometric analysis of
the bilevel problem. This allows us to reformulate the bilevel problem as a
single-level optimization one; see \cref{liftedcons}. We
then establish several algebraic and topological properties of the lifted
constraint set, including nonconvexity, weak sequential closedness, path
connectedness, and nowhere density; see \cref{topc,nonconvexity}.

Second, we analyze the local first-order geometry of the lifted constraint
set by characterizing its tangent and normal cones under suitable
nondegeneracy hypotheses; see
\cref{p1,Boucon,Boucon2,Ctc,prop:ClarkeNormal}. These cone characterizations
lead to qualified first-order necessary optimality conditions for local
minimizers in multiplier form; see \cref{thm:nec-F,thm:nec-FN}. The analysis
relies on two different assumptions.
\begin{itemize}
    \item \emph{(Observability condition).}
        The first assumption is an observability condition for a coupled linearized
        state--adjoint system; see \cref{obsass}. To the best of our knowledge,
        this is the first use of such a condition as a constraint qualification in
        bilevel optimal control. Under this assumption, the polar of the Bouligand
        tangent cone admits an explicit normal-cone representation in terms of
        multiplier variables and pointwise complementarity conditions; see
        \cref{Boucon}. This yields a qualified first-order necessary condition for
        local minimizers; see \cref{thm:nec-F}. We give a
        construction showing that the observability condition cannot be
        dispensed with: If it fails, the normal-cone formula can break down, and
        local minimizers need not admit multipliers of the form described in the
        theorem; see \cref{thmfailobs} and the discussion following
        \cref{thm:nec-FN}.

    \item \emph{(Nullity of the contact set).}
        The second assumption is that the contact set generated by the lower-level
        adjoint variable has zero Lebesgue measure. In this case, the nonsmoothness
        of the soft-thresholding map is inactive in the relevant first-order
        geometry, and the Bouligand and Clarke tangent cones coincide; see \cref{Ctc}. This yields a normal-cone
        representation, and hence a multiplier system, without imposing the
        observability condition; see \cref{Boucon2,prop:ClarkeNormal} and
        \cref{thmindass}. It also gives an alternative qualified first-order
        necessary condition for local minimizers; see \cref{thm:nec-FN}.
\end{itemize}
We prove that these assumptions are independent of one another, in the sense
that neither implies the other; see \cref{thmindass}.

Third, we use the resulting optimality system to formulate a reduced gradient
method for \eqref{eq:intro:upper}. The method computes the lower-level state and
adjoint variables, solves the upper-level adjoint system, and updates the
coefficient by means of the synthetic gradient relation. We illustrate its
behavior numerically and show the effect of the optimized coefficient on the
quality of the reconstruction; see \cref{sec:numerics}.

Let us now discuss the surrounding literature and the state of the art in the context of our results. Classical bilevel optimization is well established in finite dimensions; see, for instance, \cite{Bard1998,Dempe:2002,DempeZemkoho2020}. In optimal control and PDE-constrained optimization, the lower level is often an optimization problem in function spaces, so standard finite-dimensional reformulations have to be used with care. Karush--Kuhn--Tucker (KKT) reformulations lead to single-level formulations with complementarity constraints, known as mathematical programs with complementarity constraints (MPCCs). However, such reformulations are not automatically equivalent to the original bilevel problem \cite{DempeDutta2012,DempeZemkoho2013}. Similarly, calmness assumptions used in value-function or penalized reformulations are restrictive and have to be verified separately \cite{HenrionSurowiec2011,MehlitzMinchenkoZemkoho2021}. Function-space bilevel optimal control has been studied through KKT/MPCC, value-function, and solution-map based approaches \cite{Mehlitz2016,Mehlitz2017,MehlitzWachsmuth2016,Mehlitz:2020}.

A closely related topic is inverse optimal control with PDE constraints. Harder and Wachsmuth derive C-stationarity for an inverse optimal control problem in which parameters of a lower-level elliptic optimal control problem are identified from upper-level data, while later work establishes M-stationarity for a class of MPCCs in Lebesgue spaces and applies these results to inverse optimal control \cite{Harder:2019,HarderWachsmuth2022}. Related value-function and computational approaches are developed in \cite{DempeHarderMehlitzWachsmuth2019,Harder:2023}, and a broader overview is given in \cite{Dempe:2022}.

Among the closest model-level comparisons is the bilevel approach of Castro and De los Reyes for optimal observation placement in variational data assimilation \cite{CastroDeLosReyes2020}. There -- as here -- the lower level reconstructs the initial condition of a parabolic system. The upper level, however, optimizes space-time observation placement rather than a time-dependent coefficient in the PDE. Christof's work on nonsmooth elliptic lower-level problems provides another useful comparison: There the lower-level solution map can still be Fr\'echet differentiable under suitable assumptions \cite{Christof:2020}.
Bilevel optimization also occurs in computational bilevel learning for inverse problems and imaging. Parameter learning for nonsmooth variational reconstruction models was developed in \cite{Kunisch:2013,Schonlieb2013,Schonlieb16,Schonlieb17,Schonlieb17a,Holler:2018}, with later work on learned discretizations of TV and bilevel training of TV-type regularizers \cite{ChambollePock2021,PagliariPapafitsorosRaitaVikelis2022}.

Although observability conditions have, to the best of our knowledge, not been used in the context of constraint qualifications for bilevel control problems, the link between observability and controllability is well-known in classical control theory: For example, for finite-dimensional time-invariant linear systems, observability is equivalent to controllability of the dual system through the Kalman rank condition; see, e.g., \cite{Sontag1998}. The situation is naturally more delicate for infinite-dimensional control systems, but similar duality results between observability and control hold in this setting as well. This field has received significant attention over the last decades, and we only refer to the now classical works \cite{Fattorini1966,Sakawa1975,Dolecki1977,Russell1978,Lions1988,Glowinski1994,Glowinski1995,Tucsnak2009}. Observability results are also crucial for unique continuation \cite{Tataru1996} and inverse problems \cite{Isakov2017,Yamamoto2025} for partial differential equations, where spectral inequalities and Carleman estimates are standard tools.
Specifically, our observability condition is related to classical parabolic observability, unique continuation, coefficient identification, and null controllability\cite{LebeauRobbiano1995,FursikovImanuvilov1996,LeRousseauLebeau2012,ErvedozaZuazua2011,EscauriazaSereginSverak2003,Zhang2012}. However, it is not a standard observability estimate for a single heat equation observed on an interior or boundary subset but an injectivity assumption for a scalar-in-time observation of a coupled state--adjoint sensitivity system. Relevant points of comparison are therefore coupled and cascade parabolic systems with reduced observation or control channels \cite{FernandezCaraGuerrero2006,AmmarKhodjaBenabdallahGonzalezBurgosDeTeresa2011,GonzalezBurgosDeTeresa2010,KavianDeTeresa2010,BenabdallahCristofolGaitanDeTeresa2010}, as well as insensitizing-control and inverse-problem settings involving reduced measurements and identifiability \cite{deTeresaZuazua2009,BenabdallahCristofolGaitanYamamoto2009,Yamamoto2009}.

The variational-geometric part of the analysis is connected to normal cones and stationarity theory for pointwise constraints and complementarity systems in function spaces. Tangent and normal cone formulas for pointwise-defined sets in Lebesgue spaces are studied in \cite{MehlitzWachsmuth2018}. Related MPCC and complementarity-constrained problems in Banach, Lebesgue, and Sobolev spaces are treated in \cite{ClasonDengMehlitzPruefert2020,HarderWachsmuth2018,HarderWachsmuth2022,Wachsmuth2015}. In bilevel imaging, De los Reyes and Villac\'is derive Mordukhovich and Bouligand stationarity conditions for total variation (TV) regularized lower-level problems in a finite-dimensional discrete imaging setting \cite{DelosReyes:2022}. They also prove Lipschitz continuity and directional differentiability of the lower-level solution operator and characterize the Bouligand subdifferential of the solution mapping. They also use a single-level reformulation based on the lower-level optimality system, but their variational analysis is carried out for a primal-dual constraint system involving the graph of a subdifferential mapping. In contrast, our lifted constraint set is the graph of the lower-level solution mapping itself.

Finally, gradient-based approximation techniques for bilevel problems with nonsmooth lower-level structure were developed in \cite{Ochs:2016}. More recent single-step and single-loop methods for bilevel parameter learning are studied in \cite{SV_2024,SuonperaValkonen2025}. These works are mainly computational, whereas the focus here is on function-space first-order geometry and necessary optimality conditions.

The paper is organized as follows. In \cref{sec:statements}, we give a precise formulation of the bilevel control problem and state the main results of this work. \Cref{sec:prems} collects the notation, definitions, and preliminary results used in the analysis. The proofs of the main results are given in \cref{sec:proofs}. Finally, in \cref{sec:numerics}, we present the reduced gradient method and illustrate it with numerical experiments.

\section{The bilevel problem and overview of results}\label{sec:statements}

In this section, we introduce the bilevel problem we are interested in and state the main results.

\subsection{The bilevel optimization problem}
Let $\Omega \subseteq \mathbb{R}^d$ be a bounded convex domain and $T>0$ a time horizon. As usual, we denote $Q:=(0,T)\times\Omega$ and $\Sigma:=(0,T)\times\partial\Omega$. We fix parameters $\lambda,\mu>0$.
For a time-dependent coefficient ${q} \in L^2(0,T)$, the \emph{lower-level problem} is given by
\begin{equation}\label{cost}
    \min_{u\in L^{2}(\Omega)}
    \left\{
        \frac12 \int_{0}^{T}\!\!\int_{\Omega} |y-y_d|^{2}\,dx\,dt
        +\frac{\lambda}{2}\int_{\Omega}|u|^{2}\,dx
        +\mu\int_{\Omega}|u|\,dx
    \right\}
\end{equation}
\begin{equation}\label{system}
    \text{subject to}\quad
    \left\{
        \begin{aligned}
            \partial_t y-\Delta y+q(t)\,y &= 0 & \text{in }Q,\\
            y &= 0 & \text{on }\Sigma,\\
            y(0) &= u & \text{in }\Omega .
        \end{aligned}
    \right.
\end{equation}
For each time-dependent coefficient, this problem consists in reconstructing an initial condition from a given tracking datum
\(y_d \in L^2(Q)\). It can be seen by standard arguments, that
for each coefficient the lower-level problem \eqref{cost}--\eqref{system}
has a unique solution. Given a coefficient \(q\in L^2(0,T)\), we denote by
\(u_q\in L^2(\Omega)\) the unique minimizer of problem
\eqref{cost}--\eqref{system}.

Given $u_d \in L^2(\Omega)$ and $\alpha > 0$, the \emph{bilevel} problem we consider is defined as
\begin{equation}\label{Bip}
    \min_{{q}\in L^2(0,T)} \left\{ \frac12
        \int_\Omega |u_{q}(x) - u_d(x)|^2 \, dx
        + \frac{\alpha}{2}
    \int_0^T |{q}(t)|^2 \, dt \right\}.
\end{equation}
This problem consists in finding a parameter \(q\in L^2(0,T)\)
such that the initial condition \(u_q\in L^2(\Omega)\) reconstructed at the
lower level from the observation \(y_d\) using \(q\) is as close as possible to
a reference datum \(u_d\), while simultaneously controlling the size
of \(q\) through a regularization term.

\subsection{The set of lifted constraints}\label{liftedcons}
Since the lower-level problem is convex, its solution can be fully characterized by its
first-order optimality system. Let us  introduce now a set encoding this system. Define \emph{the set of lifted constraints} by
\[
    \mathcal{C}:=
    \left\{(q,u)\in L^2(0,T)\times L^2(\Omega)\ \middle|\ \exists\, y,\varphi\in W(0,T):
        \begin{cases}
            \begin{cases}
                \partial_t y-\Delta y+q(t)\,y=0 & \text{in }Q,\\
                y=0 & \text{on }\Sigma,\\
                y(0)=u & \text{in }\Omega,
            \end{cases}\\[0.6em]
            \begin{cases}
                -\partial_t\varphi-\Delta\varphi+q(t)\,\varphi=y-y_d & \text{in }Q,\\
                \varphi=0 & \text{on }\Sigma,\\
                \varphi(T)=0 & \text{in }\Omega,
            \end{cases}\\[0.6em]
            u=\prox_{(\mu/\lambda)|\cdot|}\!\Bigl(-\frac1\lambda\,\varphi(0)\Bigr)
            \quad \text{a.e.\ in }\Omega.
        \end{cases}
    \right\}.
\]
It can be seen that $(q,u)\in L^2(0,T)\times L^2(\Omega)$ belongs to $\mathcal C$
if and only if $u$ is the unique minimizer of the lower level problem
\eqref{cost}--\eqref{system} corresponding to $q$; in particular, it follows that $\mathcal C$ is nonempty.

Consider now the functional $\Phi: L^2(0,T)\times L^2(\Omega)\to\mathbb{R}\cup\{+\infty\}$ given by
\[
    \Phi({q},u)
    := \frac12\int_\Omega |u(x)-u_d(x)|^2\,dx
    +\frac{\alpha}{2}\int_0^T |{q}(t)|^2\,dt
    + \iota_{\mathcal C}({q},u),
\]
where $\iota_{\mathcal C}:L^2(0,T)\times L^2(\Omega)\to\mathbb{R}\cup\{+\infty\}$
denotes the indicator function of $\mathcal C$, defined by
\[
    \iota_{\mathcal C}(q,u):=
    \begin{cases}
        0 & \text{if}\,\,(q,u)\in\mathcal C,\\[2mm]
        +\infty & \text{if}\,\,(q,u)\notin\mathcal C.
    \end{cases}
\]
By construction, the set $\mathcal C$ is the graph of the solution mapping $q\longmapsto u_q$. It also follows that if $\bar q\in L^2(0,T)$ is a local minimizer of  the bilevel problem \eqref{cost}--\eqref{Bip}, then $(\bar q,u_{\bar q})$ is a local minimizer of $\Phi$.

\subsection{Geometric properties of the constraints}

We begin recording some of the basic topological features of the set of lifted constraints.
\begin{proposition}\label{topc}
    The set $\mathcal C$ is weakly sequentially closed, path connected and nowhere dense.
\end{proposition}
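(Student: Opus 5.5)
We prove the three properties separately. Each rests on the fact, noted above, that $\mathcal C$ is the graph of the solution map $S\colon q\mapsto u_q$, and on the compactness properties of the parabolic solution operator.

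\emph{Weak sequential closedness.} Let $(q_n,u_n)\in\mathcal C$ with $q_n\rightharpoonup q$ in $L^2(0,T)$ and $u_n\rightharpoonup u$ in $L^2(\Omega)$.
\begin{enumerate}
\item Since $\prox_{(\mu/\lambda)|\cdot|}$ is $1$-Lipschitz with value $0$ at $0$, we have $\|u_n\|\le \lambda^{-1}\|\varphi_n(0)\|$.
\item Testing the state equation with $y_n$ and the adjoint equation with $\varphi_n$, and using Gronwall with the bound $\int_0^T|q_n|\,dt\le C$, gives bounds for $y_n,\varphi_n$ in $W(0,T)$ that are uniform in $n$.
\item By Aubin--Lions, a subsequence satisfies $y_n\to y$ and $\varphi_n\to\varphi$ strongly in $L^2(Q)$, weakly in $W(0,T)$.
\item The key point is passing to the limit in the bilinear terms $q_n y_n$ and $q_n\varphi_n$. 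Writing $\int_Q q_n y_n\psi=\int_0^T q_n(t)\,g_n(t)\,dt$ with $g_n(t)=\int_\Omega y_n(t)\psi(t)\,dx$, strong convergence $g_n\to g$ in $L^2(0,T)$ for smooth test functions $\psi$ suffices, since $q_n$ converges only weakly. This holds because $y_n\to y$ strongly in $L^2(0,T;L^2(\Omega))$.
\item Hence $y$ and $\varphi$ solve the limit state and adjoint equations, with $y(0)=u$ obtained from weak continuity of the trace at $t=0$.
\item For the prox relation we need strong convergence of $\varphi_n(0)$. I would obtain it from the smoothing of the backward adjoint equation. Alternatively, use $\varphi_n(0)=\Lambda_{q_n}(y_n-y_d)$, where the data are bounded in $L^2(Q)$, and the compactness of the map into $L^2(\Omega)$ uniformly in $q_n$, via an $H^1_0$ bound on $\varphi_n(0)$. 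This bound follows by testing with $-\Delta\varphi_n$, which again needs only $\int_0^T|q_n|\,dt$ bounded.
\item Then $\varphi_n(0)\to\varphi(0)$ strongly, so $u_n=\prox(\cdot)\to\prox(-\varphi(0)/\lambda)$ strongly, and this limit equals $u$. Thus $(q,u)\in\mathcal C$.
\end{enumerate}
I expect this step, strong convergence of $\varphi_n(0)$ under merely weakly convergent coefficients, to be the main technical obstacle.

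\emph{Path connectedness.} The same estimates, now with strong convergence of $q_n$, show that $S$ is continuous from $L^2(0,T)$ to $L^2(\Omega)$; in fact it is even weak-to-strong continuous by the previous argument. So $\mathcal C$ is the graph of a continuous map on a connected space. Concretely, for $(q_0,u_{q_0}),(q_1,u_{q_1})\in\mathcal C$, the curve $s\mapsto (q_s,S(q_s))$ with $q_s=(1-s)q_0+sq_1$ is a continuous path in $\mathcal C$ joining them.

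\emph{Nowhere density.} By the first part $\mathcal C$ is closed, so it suffices to show it has empty interior. Suppose a ball $B_\varepsilon(q,u_q)$ were contained in $\mathcal C$. Then $(q,u_q+h)\in\mathcal C$ for every small $h\neq0$, which contradicts uniqueness of the lower-level minimizer, since $\mathcal C$ is a graph. Hence $\mathcal C$ is nowhere dense.
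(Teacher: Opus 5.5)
Your argument is correct in substance, but for the central step it takes a different route from the paper. The paper proves (\cref{conom}) that $q\mapsto u_q$ is weak-to-strong sequentially continuous by a purely variational argument on the lower-level objective $\mathcal J_q$:
\begin{itemize}
\item the a priori bound $\frac\lambda2\|u_{q_n}\|^2\le\mathcal J_{q_n}(0)=\frac12\|y_d\|_{L^2(Q)}^2$;
\item a weak subsequential limit $\bar u$ and $L^2(Q)$-compactness of the states;
\item weak lower semicontinuity together with $\limsup_n\mathcal J_{q_n}(u_n)\le\mathcal J_q(u_q)$, which gives $\bar u=u_q$;
\item the $\lambda$-strong convexity error bound $\|u_q-u_n\|^2\le\frac2\lambda(\mathcal J_{q_n}(u_q)-\mathcal J_{q_n}(u_n))\to0$, which upgrades to strong convergence.
\end{itemize}
Closedness, path connectedness and nowhere density are then read off from the graph structure, exactly as in your last two parts.

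You instead pass to the limit directly in the optimality system defining $\mathcal C$. You get the strong convergence needed in the nonlinear prox relation from an additional $H_0^1(\Omega)$ bound on $\varphi_n(0)$ and Rellich's theorem. Your version stays closer to the definition of $\mathcal C$ and makes explicit the limit in the bilinear terms $q_ny_n$, $q_n\varphi_n$, which the paper only asserts. The paper's version needs no extra regularity, because convexity of the lower level does the upgrading from weak to strong convergence.

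One step needs repair. For path connectedness you need continuity of $S$: you start from $q_n\to q$ alone and do not know a priori that $(u_{q_n})$ is bounded. Your Step 1 does not supply this. The bound on $\varphi_n(0)$ from Step 2 itself depends on $\|u_n\|$, so $\|u_n\|\le\lambda^{-1}\|\varphi_n(0)\|\le\lambda^{-1}C(\|u_n\|+\|y_d\|)$ is circular unless $C<\lambda$.

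Use optimality instead: $\frac\lambda2\|u_q\|^2\le\mathcal J_q(u_q)\le\mathcal J_q(0)=\frac12\|y_d\|_{L^2(Q)}^2$. Alternatively, test the optimality inclusion $0\in(\lambda I+A_q)u_q+b_q+\mu\,\partial|u_q|$ with $u_q$ and use positivity of $A_q$ from \cref{al1}. With this uniform bound, apply your closedness argument along subsequences (your Step 7 already yields strong convergence of $u_n$). Combined with the usual sub-subsequence argument and uniqueness of $u_q$, this gives weak-to-strong continuity of $S$, and the rest of your proof goes through.
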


A natural follow-up question is whether the lifted constraint set is convex. The following result shows that this can happen only when the lower-level problem is trivial, namely when the tracking datum in the lower-level problem is zero almost everywhere.
\begin{proposition}\label{nonconvexity}
    The following statements are equivalent.
    \begin{enumerate}[label=(\roman*),nosep]
        \item The set $\{y_d\neq 0\}$ has zero Lebesgue measure.
        \item The set $\mathcal C$ is convex.
    \end{enumerate}
\end{proposition}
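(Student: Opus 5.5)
The implication (i)$\Rightarrow$(ii) is direct. If $y_d=0$ a.e., then for every $q\in L^2(0,T)$ the choice $u=0$ is the minimizer of \eqref{cost}--\eqref{system}. Indeed, the cost is nonnegative and vanishes at $u=0$, and by strict convexity the minimizer is unique. Hence $u_q=0$ for all $q$, so $\mathcal C=L^2(0,T)\times\{0\}$, which is a closed subspace and in particular convex.

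For (ii)$\Rightarrow$(i) I argue by contraposition. Assume $\{y_d\neq0\}$ has positive measure; I will show that $\mathcal C$ is not convex. Since $\mathcal C$ is the graph of $q\mapsto u_q$, convexity means that for all $q_1,q_2$ and $\theta\in[0,1]$,
\[
u_{\theta q_1+(1-\theta)q_2}=\theta u_{q_1}+(1-\theta)u_{q_2}.
\]
In particular $q\mapsto u_q$ would be affine along every line, and the plan is to contradict this using \emph{constant} coefficients $q\equiv c\in\mathbb R$. For constant $c$ we have $y_c(t)=e^{-ct}S(t)u$, where $S$ is the Dirichlet heat semigroup. The lower-level problem then becomes
\[
\min_u \tfrac12\|A_c u-y_d\|^2_{L^2(Q)}+\tfrac\lambda2\|u\|^2+\mu\|u\|_{L^1},\qquad A_c u:=e^{-c\,\cdot}S(\cdot)u .
\]

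The key step is an asymptotic argument as $c\to+\infty$. Affineness along the line $\{c\}$, combined with the bound $\|u_q\|\le C$, which follows from comparing with $u=0$ via $\tfrac\lambda2\|u_q\|^2\le\tfrac12\|y_d\|^2$, forces $c\mapsto u_c$ to be constant. A bounded affine function on $\mathbb R$ is constant. As $c\to+\infty$ we have $\|A_c\|\to0$; more precisely $\|A_c\|^2\le\int_0^Te^{-2ct}\,dt\to0$. The optimality condition reads $u_c=\prox_{(\mu/\lambda)|\cdot|}(-\varphi_c(0)/\lambda)$ with $\varphi_c(0)=A_c^*(A_cu_c-y_d)$, which tends to $0$ in $L^2$. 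By continuity of the prox, $u_c\to0$, and constancy then gives $u_c=0$ for every $c\in\mathbb R$.

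It remains to contradict $u_c=0$ for all $c$ by sending $c\to-\infty$. By the soft-thresholding characterization, $u_c=0$ holds iff $|\varphi_c(0)|\le\mu$ a.e., where $\varphi_c(0)=-A_c^*y_d$, i.e.
\[
\varphi_c(0)=-\int_0^Te^{-ct}S(t)y_d(t)\,dt .
\]
I want this to exceed $\mu$ on a set of positive measure for $c$ very negative. Write $g(t):=S(t)y_d(t)\in L^2(\Omega)$. Since $S(t)$ is injective, $g\neq0$ on a set of $t$ of positive measure. Let $t^*$ be the essential supremum of the support of $\|g(t)\|$. The weight $e^{-ct}=e^{|c|t}$ concentrates near $t^*$, and I expect the main obstacle to be showing that $\|\int_0^T e^{|c|t}g(t)\,dt\|_{L^2}\to\infty$, since cancellation in the integral is conceivable. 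To handle this, I test against eigenfunctions: with $g_k(t)=\langle g(t),e_k\rangle$, some $g_k\not\equiv0$ on $(t^*-\epsilon,t^*)$. The Laplace-type transform $\int_0^T e^{|c|t}g_k(t)\,dt$ of a nonzero $L^1$ function cannot stay bounded as $|c|\to\infty$: if $e^{-|c|t^*}\!\int e^{|c|t}g_k\to0$ with bounded growth, an analyticity or Titchmarsh-type argument gives $g_k=0$ near $t^*$. Hence $\|\varphi_c(0)\|_{L^2}$ is unbounded. Since an $L^2$ function bounded by $\mu$ a.e.\ has $L^2$-norm at most $\mu|\Omega|^{1/2}$, we get $\|\varphi_c(0)\|_{L^\infty}>\mu$ for suitable $c$, so $u_c\neq0$. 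This is the desired contradiction, and it shows $\mathcal C$ is not convex.
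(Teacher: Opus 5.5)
Your proposal is correct and follows essentially the same route as the paper: constant coefficients turn convexity of $\mathcal C$ into a bounded, hence constant, affine map $c\mapsto u_c$; letting $c\to+\infty$ forces $u_c\equiv 0$; and then the unboundedness of a Laplace-type integral of a nonzero eigencoefficient $g_k(t)=e^{-\lambda_k t}\langle y_d(t),e_k\rangle$ as $c\to-\infty$ contradicts the bound $|\varphi_c(0)|\le\mu$. You leave only that unboundedness step as a sketch, as does the paper, which simply cites Varadhan's lemma. It closes cleanly: $z\mapsto\int_0^T e^{zt}g_k(t)\,dt$ is entire of exponential type and bounded on the closed left half-plane, so if it were also bounded on $[0,\infty)$, Phragm\'en--Lindel\"of on the two right quarter-planes and Liouville would make it constant, hence identically zero (its limit as $z\to-\infty$ is $0$), which forces $g_k=0$ by Fourier uniqueness.
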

Save for the trivial case $y_d\equiv 0$, the set of lifted constraints is nonconvex; consequently, in the subsequent analysis, we will rely on
its local first-order geometry.
We begin characterizing the Bouligand tangent cone.
\begin{proposition}\label{p1}
    Let $(q,u)\in\mathcal C$ be given, and denote by $(y,\varphi)\in W(0,T)\times W(0,T)$
    its associated state--adjoint pair. The Bouligand tangent cone
    $T_{\mathcal C}(q,u)$ consists of all pairs $(h,v)\in L^2(0,T)\times L^2(\Omega)$
    for which there exists a pair $(z,\psi)\in W(0,T)\times W(0,T)$ satisfying
    \begin{equation}\label{Btclin}
        \left\{
            \begin{gathered}
                \left\{
                    \begin{aligned}
                        \partial_t z - \Delta z + q(t)\,z + h(t)\,y &= 0
                        && \text{in }Q,\\
                        z &= 0
                        && \text{on }\Sigma,\\
                        z(0) &= v
                        && \text{in }\Omega,
                    \end{aligned}
                \right.
                \qquad
                \left\{
                    \begin{aligned}
                        -\partial_t \psi - \Delta \psi + q(t)\,\psi + h(t)\,\varphi &= z
                        && \text{in }Q,\\
                        \psi &= 0
                        && \text{on }\Sigma,\\
                        \psi(T) &= 0
                        && \text{in }\Omega,
                    \end{aligned}
                \right.
                \\[1em]
                v=
                \begin{cases}
                    0
                    & \text{if }\,\, |\varphi(0)|<\mu,\\[4pt]
                    -\dfrac{1}{\lambda}\min\{\psi(0),0\}
                    & \text{if }\,\, \varphi(0)=-\mu,\\[4pt]
                    -\dfrac{1}{\lambda}\max\{\psi(0),0\}
                    & \text{if }\,\, \varphi(0)=\mu,\\[4pt]
                    -\dfrac{1}{\lambda}\,\psi(0)
                    & \text{if }\,\, |\varphi(0)|>\mu,
                \end{cases}
                \qquad \text{a.e. in }\Omega.
            \end{gathered}
        \right.
    \end{equation}
\end{proposition}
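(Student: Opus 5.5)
Since $\mathcal C$ is the graph of the solution map $S\colon q\mapsto u_q$, the plan is to show that $S$ is Lipschitz continuous and Hadamard directionally differentiable from $L^2(0,T)$ to $L^2(\Omega)$. We then identify the directional derivative $S'(q;h)$ with the $v$-component of the system \eqref{Btclin}. For graphs of such maps one has the standard identity
\[
T_{\mathcal C}(q,u)=\{(h,S'(q;h)) \mid h\in L^2(0,T)\}.
\]
Indeed, if $(q_k,u_k)=(q+t_kh_k,S(q+t_kh_k))$ with $t_k\downarrow 0$, $h_k\to h$, and $(u_k-u)/t_k\to v$, then Hadamard differentiability forces $v=S'(q;h)$. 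Conversely, taking $h_k=h$ realizes every such pair. So the proof reduces to three steps: (a) local Lipschitz estimates, (b) unique solvability of \eqref{Btclin} for each $h$, and (c) convergence of difference quotients.

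For (a), write $S=\prox\circ(-\tfrac1\lambda\,\cdot)\circ G$ with $G(q)=\varphi(0)$. Here $\varphi$ is computed from $u$, so this is a fixed-point relation. I would instead use the strong convexity of the lower-level functional with modulus $\lambda$. Testing the optimality conditions for $q_1$ and $q_2$ against $u_1-u_2$ gives $\lambda\norm{u_1-u_2}^2\le\langle\varphi_2(0)-\tilde\varphi(0),u_1-u_2\rangle$, where $\tilde\varphi$ is a mixed adjoint. Standard $W(0,T)$ estimates for the bilinear term, using $q\in L^2(0,T)$ and $y\in L^\infty(0,T;L^2)\cap L^2(0,T;H^1_0)$, then give $\norm{u_1-u_2}\le L\norm{q_1-q_2}$ on bounded sets. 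Along the way the maps $q\mapsto y,\varphi$ for fixed $u$ are $C^1$, with derivatives given by the $h$-terms in \eqref{Btclin}.

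For (b), write $P$ for the pointwise piecewise-linear map in the last line of \eqref{Btclin}, which sends $\psi(0)$ to $v$. The system is the first-order optimality system of a convex quadratic problem in $v$:
\[
\min_v \tfrac12\norm{z}^2_{L^2(Q)}+\tfrac\lambda2\norm{v}^2,
\]
subject to the linearized state equation (with source $-hy$, plus a linear $\varphi$-term), and with $v$ constrained to the cone $K$. This cone is $K=\{v \mid v=0 \text{ on }\{|\varphi(0)|<\mu\},\ v\ge0 \text{ on }\{\varphi(0)=\mu\},\ v\le0 \text{ on }\{\varphi(0)=-\mu\}\}$. The sign conventions should be double-checked against the soft-thresholding derivative. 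Strong convexity gives a unique solution, hence $S'(q;h)$ is well defined, positively homogeneous, and Lipschitz in $h$.

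For (c), set $q_t=q+th$ and $v_t=(u_{q_t}-u)/t$, which is bounded by (a); extract a weakly convergent subsequence. Write $u_{q_t}=\prox(-\varphi_t(0)/\lambda)$ and $\varphi_t(0)=\varphi(0)+t\psi_t(0)$, where $\psi_t$ solves the linearized adjoint with remainder. Pointwise, the directional derivative of $\prox_{(\mu/\lambda)|\cdot|}$ at $-\varphi(0)/\lambda$ in direction $-\psi(0)/\lambda$ is exactly the case formula. The compactness of $u\mapsto\varphi(0)$ (smoothing of the parabolic flow, $L^2\to H^1_0$) upgrades weak convergence of $v_t$ to strong convergence of $\psi_t(0)$. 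Since $\prox$ is Lipschitz, and its difference quotients converge pointwise while being dominated, dominated convergence gives strong convergence of $v_t$ to the unique solution of \eqref{Btclin}. Uniqueness from (b) then yields convergence of the whole family. Hadamard differentiability follows from directional differentiability together with the Lipschitz property.

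The main obstacle is (c): the contact set $\{|\varphi(0)|=\mu\}$ may have positive measure, so $\prox$ is only directionally differentiable there. We need strong convergence of $\psi_t(0)$ to pass to the limit inside the nonsmooth max/min. I would secure this via the compactness of the initial-to-adjoint map, together with a remainder estimate for the bilinear term $t\,h(z_t\text{-type})$ of order $o(1)$ in $W(0,T)$.
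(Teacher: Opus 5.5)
Your argument is correct once a sign is fixed. For the hard inclusion (every solution of \eqref{Btclin} is tangent) it coincides with the paper's proof:
\begin{itemize}
\item a Lipschitz bound on $q\mapsto u_q$, obtained from monotonicity of $\partial|\cdot|$ and positivity of $A_q$ via a mixed adjoint;
\item compactness giving $\psi_t(0)\to\bar\psi(0)$ strongly (you use the smoothing $L^2\to H^1_0$, the paper uses Aubin--Lions);
\item dominated convergence through the difference quotients of the soft-thresholding map;
\item uniqueness of $v$ in \eqref{Btclin}, which identifies the limit and gives convergence of the whole family.
\end{itemize}

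The sign: the cone must be $K=\{v=0\text{ on }\{|\varphi(0)|<\mu\},\ v\ge0\text{ on }\{\varphi(0)=-\mu\},\ v\le0\text{ on }\{\varphi(0)=\mu\}\}$. Indeed, \eqref{Btclin} gives $v=-\frac1\lambda\min\{\psi(0),0\}\ge0$ on $\{\varphi(0)=-\mu\}$ and $v=-\frac1\lambda\max\{\psi(0),0\}\le0$ on $\{\varphi(0)=\mu\}$. With your orientation, the variational inequality returns the wrong case formula on the contact set. Also, the $h\varphi$ contribution enters your quadratic problem as the linear term $-\int_Q h\varphi z\,dx\,dt$ in the objective. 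This is what makes the gradient exactly $\psi(0)+\lambda v$.

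There are two genuine differences from the paper.

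\emph{The inclusion $T_{\mathcal C}(q,u)\subseteq\Theta$,} where $\Theta$ is the set of pairs admitting a solution of \eqref{Btclin}. The paper argues directly. Along the given sequence $(h_n,v_n)\to(h,v)$, the initial data converge strongly by assumption. So parabolic stability alone gives $z_n\to z$ and $\psi_n\to\psi$ in $W(0,T)$, and an a.e.\ subsequence argument identifies $v$. No Lipschitz estimate, compactness, or uniqueness is needed in this direction. You instead deduce the inclusion from Hadamard directional differentiability of $S$ (Lipschitz plus directional differentiability). This forces you to do (a) and (c) first, but it yields a stronger by-product: $T_{\mathcal C}(q,u)=\operatorname{gph}S'(q;\cdot)$, with $S'(q;h)$ characterized by \eqref{Btclin}.

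\emph{Uniqueness.} The paper's \cref{lem:unique-v} is an elementary computation. For the difference $(w,\sigma,\chi)$ of two solutions it uses the identity $\int_\Omega\chi(0)w\,dx=\|\sigma\|^2_{L^2(Q)}$, combined with monotonicity of $d\mapsto f'(r;d)$ for the scalar soft-thresholding map $f$. Your route via strong convexity of a cone-constrained quadratic problem also gives existence. The paper proves existence separately in \cref{lem:Theta-surj} by Lions--Stampacchia, because it needs it for the normal cone. For \cref{p1} itself, existence is superfluous, since your step (c) already produces a solution for each $h$.
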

This proposition shows that irregularities can stem from the contact set generated by the adjoint at time zero. The following corollary states that if this set has zero Lebesgue measure, then no such degeneracies occur, and the Bouligand and Clarke tangent cones coincide.
\begin{corollary}\label{Ctc}
    Let $(q,u)\in\mathcal C$ be given and denote by $\varphi\in W(0,T)$ its associated adjoint. If the set $\{|\varphi(0)|=\mu\}$ has zero Lebesgue measure, then the Clarke tangent cone coincides with the Bouligand one at $(q,u)$, in symbols, $\hat T_{\mathcal C}(q,u) = T_{\mathcal C}(q,u)$.
\end{corollary}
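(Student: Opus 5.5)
If the contact set is null, \cref{p1} describes $T_{\mathcal C}(q,u)$ as the graph of a bounded linear operator. The plan is to show that this graph is also contained in the Clarke tangent cone, by constructing recovery sequences for every nearby base point of $\mathcal C$. The reverse inclusion $\hat T_{\mathcal C}(q,u)\subseteq T_{\mathcal C}(q,u)$ holds in general, so only $T_{\mathcal C}(q,u)\subseteq \hat T_{\mathcal C}(q,u)$ has to be shown.

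\emph{Step 1: the Bouligand cone is a linear graph.} Since $\{|\varphi(0)|=\mu\}$ is null, the two middle cases in \eqref{Btclin} can be ignored. The condition becomes $v=-\frac1\lambda\chi\,\psi(0)$ with $\chi:=\mathbbm 1_{\{|\varphi(0)|>\mu\}}$, which is linear. For fixed $h$, the coupled system \eqref{Btclin} is then a linear fixed-point problem $v=-\frac1\lambda\chi\,(\psi(0)[h,v])$. Writing $v\mapsto \psi(0)$ with $h=0$ as $S^*S$ restricted by $\chi$, the operator is $v\mapsto -\frac1\lambda\chi S^*S v$. This is nonpositive in the weighted sense, so $\lambda v+\chi S^*S(\chi v)=\text{rhs}$ is uniquely solvable on $L^2(\{\chi=1\})$. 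This is the linearized lower-level optimality condition of a strictly convex quadratic problem. Hence $T_{\mathcal C}(q,u)=\{(h,Ah)\}$ for a bounded linear $A$.

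\emph{Step 2: the solution map is strictly differentiable at $q$.} The Clarke tangent cone contains $(h,Ah)$ whenever $q\mapsto u_q$ is strictly differentiable at $q$ with derivative $A$, that is,
\[
u_{q_n+t_nh_n}-u_{q_n}=t_nAh+o(t_n)\qquad\text{whenever } q_n\to q,\ h_n\to h,\ t_n\downarrow0 .
\]
Indeed, for $(q_n,u_n)\in\mathcal C$ converging to $(q,u)$, one takes $(h_n,v_n):=\bigl(h,(u_{q_n+t_nh}-u_{q_n})/t_n\bigr)$. Then $v_n\to Ah$ and $(q_n,u_n)+t_n(h_n,v_n)\in\mathcal C$, as required for a Clarke tangent.

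To prove strict differentiability, I would write the prox relation for both base points and subtract. Let $\varphi_n$ and $\tilde\varphi_n$ be the adjoints at $q_n$ and $q_n+t_nh$. Then
\[
\Delta_n:=u_{q_n+t_nh}-u_{q_n}=\prox\bigl(-\tilde\varphi_n(0)/\lambda\bigr)-\prox\bigl(-\varphi_n(0)/\lambda\bigr).
\]
By a mean-value argument this equals $-\frac1\lambda\chi_n\,(\tilde\varphi_n(0)-\varphi_n(0))$, where $\chi_n\in[0,1]$ is a measurable multiplier. The key point is that $\chi_n\to\chi$ in measure. Two facts give this. First, $\varphi_n(0)$ and $\tilde\varphi_n(0)$ converge to $\varphi(0)$ in $L^2$, by Lipschitz dependence of the state–adjoint pair on $q$ (a Gronwall estimate) together with the Lipschitz prox. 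Second, $\chi$ is discontinuous only on the null set $\{|\varphi(0)|=\mu\}$, so convergence in measure follows at almost every point.

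Next I would compute the difference quotient. Dividing by $t_n$ and expanding the state and adjoint equations, the adjoint difference $(\tilde\varphi_n(0)-\varphi_n(0))/t_n$ equals the linearized map applied to $(h,\Delta_n/t_n)$, plus an $o(1)$ remainder. The remainder is controlled by bilinear terms of the form $t_nh\cdot(\tilde y_n-y_n)$, which are $O(t_n^2)$. The quotients $\Delta_n/t_n$ are bounded, by the Lipschitz estimate of the solution map. Passing to the limit with $\chi_n\to\chi$ in measure and the uniform bound $\chi_n\le1$ shows that every cluster point $w$ of $\Delta_n/t_n$ solves the system of Step 1, so $w=Ah$. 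It remains to upgrade weak cluster points to strong convergence. For this I would use compactness of $v\mapsto\psi(0)$ from $L^2$ to $L^2$, which comes from parabolic smoothing. Then $\chi_n\psi_n(0)\to\chi\psi(0)$ strongly, so the whole sequence converges in norm.

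I expect the main obstacle to be this limit passage with moving base points, specifically obtaining $\chi_n\to\chi$ and strong convergence. It is exactly where the nullity assumption is used: on a positive-measure contact set, $\chi_n$ could oscillate between $0$ and $1$, and the limit would only be a one-sided (Bouligand) object.
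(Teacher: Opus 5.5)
Your proposal is correct and follows essentially the same route as the paper: the same recovery sequence $\bigl(h,(u_{q_n+t_nh}-u_{q_n})/t_n\bigr)$, boundedness from the Lipschitz estimate of the solution map, compactness from parabolic smoothing, identification of the limit through the null contact set, and uniqueness of $v$ for given $h$ (the paper's \cref{lem:unique-v}, which your linear-graph Step 1 repackages). The one real difference is in the identification step: your secant-slope multiplier $\chi_n\in[0,1]$ with $\chi_n\to\chi$ in measure replaces the paper's localization with the sets $B_\delta$, $G_{n,\delta}$, $F_{n,\delta}$ and the intermediate comparison with $f'(r_n;d_n)$, and is a somewhat tidier way to carry out that step.
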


The characterization of normal cones consists of transforming polar
inequalities into explicit representations of the dual variable acting on the
relevant components. In order to do this for the set of lifted constraints, we
make an assumption that excludes the possibility that nontrivial perturbations
of the initial datum have a vanishing induced time-dependent coefficient
signal.

\begin{assumption}\label{obsass}
    Let $(q,u)\in\mathcal C$ be given, and denote by $(y,\varphi)\in W(0,T)\times W(0,T)$ its associated state--adjoint pair. Consider the operator $\mathcal O:L^2(\Omega)\to L^2(0,T)$ given by
    \[
        \mathcal O(\pi):=\int_{\Omega}\big[y(\cdot,x)\eta(\cdot,x) + \varphi(\cdot,x)\xi(\cdot,x)\big]\, dx,
    \]
    where $\xi,\eta\in W(0,T)$ satisfy
    \[
        \left\{
            \begin{aligned}
                \partial_t\xi-\Delta\xi+q(t)\,\xi &= 0 && \text{in }Q,\\
                \xi &= 0 && \text{on }\Sigma,\\
                \xi(0) &= \pi && \text{in }\Omega,
            \end{aligned}
        \right.
        \qquad
        \left\{
            \begin{aligned}
                -\partial_t\eta-\Delta\eta+q(t)\,\eta &= \xi && \text{in }Q,\\
                \eta &= 0 && \text{on }\Sigma,\\
                \eta(T) &= 0 && \text{in }\Omega.
            \end{aligned}
        \right.
    \]
    We assume that $\mathcal O:L^2(\Omega)\to L^2(0,T)$ is injective, i.e.,
    \[
        \mathcal O(\pi)=0\quad \implies\quad \pi=0\qquad \forall\,\pi\in L^2(\Omega).
    \]
\end{assumption}
This assumption can be regarded as an observability-type condition. It will play the role of a constraint qualification in the next proposition. Essentially, the condition prevents
loss of information in the coefficient component and allows the polar
inequality to be rewritten as an explicit multiplier system.
\begin{proposition}\label{Boucon}
    Let $(q,u)\in\mathcal C$ be such that \cref{obsass} holds. Denote by $(y,\varphi)\in W(0,T)\times W(0,T)$
    the associated state--adjoint pair.  Then a pair $(h^*,v^*)\in L^2(0,T)\times L^2(\Omega)$ belongs to $N_{\mathcal C}(q,u)$ if and only if there exist
    $\xi,\eta\in W(0,T)$ and $\pi,\rho\in L^2(\Omega)$ satisfying
    \begin{equation}\label{sls}
        \left\{
            \begin{gathered}
                \left\{
                    \begin{aligned}
                        \partial_t\xi-\Delta\xi+q(t)\,\xi &= 0
                        && \text{in }Q,\\
                        \xi &= 0
                        && \text{on }\Sigma,\\
                        \xi(0) &= \pi
                        && \text{in }\Omega,
                    \end{aligned}
                \right.
                \qquad
                \left\{
                    \begin{aligned}
                        -\partial_t\eta-\Delta\eta+q(t)\,\eta &= \xi
                        && \text{in }Q,\\
                        \eta &= 0
                        && \text{on }\Sigma,\\
                        \eta(T) &= 0
                        && \text{in }\Omega,
                    \end{aligned}
                \right.
                \\[1em]
                v^*=\rho-\lambda\,\pi-\eta(0)
                \qquad\text{a.e. in }\Omega,
                \\[0.5em]
                h^*=\int_\Omega \bigl(y\,\eta+\varphi\,\xi\bigr)\,dx
                \qquad\text{a.e. in }[0,T].
            \end{gathered}
        \right.
    \end{equation}
    and the complementarity conditions
    \begin{equation}
        \left\{\,\,\,\,
            \begin{gathered}
                \pi=0 \ \text{a.e.\ on } \{|\varphi(0)|<\mu\},\qquad
                \pi\ge 0 \ \text{a.e.\ on } \{\varphi(0)=-\mu\},\qquad
                \pi\le 0 \ \text{a.e.\ on } \{\varphi(0)=\mu\},
                \\[0.5em]
                \rho=0 \ \text{a.e.\ on } \{|\varphi(0)|>\mu\},\qquad
                \rho\le 0 \ \text{a.e.\ on } \{\varphi(0)=-\mu\},\qquad
                \rho\ge 0 \ \text{a.e.\ on } \{\varphi(0)=\mu\}.
            \end{gathered}
        \right.
    \end{equation}
\end{proposition}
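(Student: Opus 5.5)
The normal cone here is $N_{\mathcal C}(q,u)=T_{\mathcal C}(q,u)^\circ$, the polar of the Bouligand tangent cone from \cref{p1}. I would characterize this polar by viewing the tangent cone as the preimage of a simple pointwise cone under a bounded linear map, and then computing the polar by duality. Split $\Omega$ into the four sets $\Omega_0=\{|\varphi(0)|<\mu\}$, $\Omega_\pm=\{\varphi(0)=\pm\mu\}$ and $\Omega_1=\{|\varphi(0)|>\mu\}$. For $(h,v)$, let $(z,\psi)$ solve the two linear equations in \eqref{Btclin}; this defines a bounded linear map $(h,v)\mapsto\psi(0)$. The relation $v=\cdots$ in \eqref{Btclin} is the pointwise condition
\[
(v,-\lambda^{-1}\psi(0))(x)\in K(x),
\]
where $K(x)$ is the graph of the directional derivative of the prox at $-\varphi(0)/\lambda$. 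This is a closed convex cone on $\Omega_0$ and $\Omega_1$ (a line) and on $\Omega_\pm$ (two rays with the sign structure of a complementarity angle). Hence $T_{\mathcal C}(q,u)=\{(h,v): (v,\psi(0))\in\mathcal K\}$ with $\mathcal K\subset L^2(\Omega)^2$ a closed convex cone.

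\textbf{Sufficiency.} Take $(\xi,\eta,\pi,\rho)$ as in \eqref{sls} together with the complementarity signs, and let $(h,v)\in T_{\mathcal C}(q,u)$. I would compute $\langle h^*,h\rangle+\langle v^*,v\rangle$ by testing the $z$-equation with $\eta$ and the $\psi$-equation with $\xi$, integrating by parts in time. The terms $h\,y\,\eta$ and $h\,\varphi\,\xi$ reproduce $\langle h^*,h\rangle$, and the coupling terms $\int z\eta$ and $\int\xi z$ cancel. What remains is a boundary identity of the form
\[
\langle h^*,h\rangle+\langle v^*,v\rangle=\langle\rho,v\rangle-\langle\pi,\lambda v+\psi(0)\rangle .
\]
This is checked pointwise on the four sets:
\begin{itemize}
\item on $\Omega_0$, $v=0$ and $\pi=0$;
\item on $\Omega_1$, $\lambda v+\psi(0)=0$ and $\rho=0$;
\item on $\Omega_-$, the relation $v=-\lambda^{-1}\min\{\psi(0),0\}$ gives $v\ge0$ and $\lambda v+\psi(0)\ge 0$, so $\rho v\le0$ and $\pi(\lambda v+\psi(0))\ge0$;
\item on $\Omega_+$, the signs are analogous.
\end{itemize}
Hence the pairing is $\le 0$, so $(h^*,v^*)\in N_{\mathcal C}(q,u)$. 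This direction does not need \cref{obsass}.

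\textbf{Necessity.} This is the main obstacle. I would write $T_{\mathcal C}(q,u)=A^{-1}(\mathcal K)$ with $A(h,v)=(v,\lambda v+\psi(0))$. Because $v$ appears as an identity component, $A$ is surjective onto $L^2(\Omega)^2$: given a target, choose $v$ and then choose $h$, or simply note that the second component is a compact perturbation plus identity after adjusting $v$. Since $\mathcal K$ is a closed convex cone and $A$ has closed range, the polar formula
\[
(A^{-1}\mathcal K)^\circ=A^*(\mathcal K^\circ)
\]
holds. The pointwise polar $\mathcal K^\circ$ yields exactly the pairs $(\rho,-\pi)$ with the stated sign conditions.

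It remains to compute $A^*$ via the adjoint equations. The part acting on $\psi(0)$ generates $\xi$ (forward equation, datum $\pi$) and then $\eta$, producing $h^*=\mathcal O(\pi)$ and $v^*=\rho-\lambda\pi-\eta(0)$.

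The delicate point is closedness of the range of $A^*$ restricted to $\mathcal K^\circ$, or equivalently whether the multiplier representation is attained rather than only holding in the closure. Here \cref{obsass} enters. I would argue by taking the closure: if $(h^*_n,v^*_n)\to(h^*,v^*)$ with multipliers $(\pi_n,\rho_n)$, one must bound $\pi_n$. The identity $h^*_n=\mathcal O(\pi_n)$ and injectivity of $\mathcal O$, combined with the structure $v^*_n=\rho_n-\lambda\pi_n-\eta_n(0)$ (where $\pi_n$ and $\rho_n$ live on essentially complementary sets, overlapping only on $\Omega_\pm$ with opposite signs), give control of $\pi_n$ on $\Omega_0\cup\Omega_1$ directly. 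On the contact set, a contradiction argument normalizing $\|\pi_n\|=1$ would extract a weak limit $\pi$ with $\mathcal O(\pi)=0$, hence $\pi=0$ by \cref{obsass}. Compactness of $\pi\mapsto\eta(0)$ then upgrades this to strong convergence and contradicts the normalization. Weak limits of $(\pi_n,\rho_n)$ preserve the sign constraints, which completes the representation.
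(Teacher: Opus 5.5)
Your sufficiency direction is correct and is the paper's second inclusion: the identity $\langle h^*,h\rangle+\langle v^*,v\rangle=\langle\rho,v\rangle-\langle\pi,\psi(0)+\lambda v\rangle$ followed by a pointwise sign check. The necessity direction, however, has a genuine gap: the pointwise set is not convex.

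In the coordinates $(a,b)=(v,\lambda v+\psi(0))$ of your map $A$, the tangent relation on $\{\varphi(0)=-\mu\}$ reads $a\ge 0$, $b\ge 0$, $ab=0$, with the reverse signs on $\{\varphi(0)=\mu\}$. This is a union of two rays. So $\mathcal K$ and $T_{\mathcal C}(q,u)=A^{-1}(\mathcal K)$ are nonconvex cones as soon as the contact set has positive measure, which is exactly when \cref{Boucon} goes beyond \cref{Boucon2}. For nonconvex $\mathcal K$, bipolarity only gives $\overline{A^*(\mathcal K^\circ)}=(A^{-1}\overline{\operatorname{conv}}\,\mathcal K)^\circ\subseteq(A^{-1}\mathcal K)^\circ$, and that is just sufficiency again.

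Moreover, $A$ is not onto. Once the first component fixes $v$, only $Bh$ is free in the second, and $B$ is compact (parabolic smoothing puts its range in $H_0^1(\Omega)$). So $\operatorname{ran}A$ is not even closed; if $A$ were onto, your ``delicate point'' would not arise at all.

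Your normalization argument does essentially show that $A^*(\mathcal K^\circ)$ is closed under \cref{obsass}, but closedness is not the obstruction. Take the example of \cref{thmfailobs}, where $A_q v=\int_0^1 e^{2t\Delta}v\,dt$ and $|A_qv|\le A_q\mathbbm{1}\le\sum_{k\text{ odd}}2(k\pi)^{-3}<\tfrac1{10}$ whenever $|v|\le 1$. Choose $h=0$ and $v=\mathbbm{1}_{I_-}-c\,\mathbbm{1}_{I_+}$ with $c\in(\tfrac1{10},1)$. Then:
\begin{itemize}
\item on $I_-$: $v\ge 0$ and $v+A_qv>0$;
\item on $I_+$: $v\le 0$ and $v+A_qv<0$;
\item hence $(0,v)\in A^{-1}(\overline{\operatorname{conv}}\,\mathcal K)$, while $\int_\Omega v=(1-c)/4>0$.
\end{itemize}
So $(0,1)\in N_{\mathcal C}(q,u)$ lies outside $\overline{A^*(\mathcal K^\circ)}$. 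The representation fails there not for lack of attainment, but because the convexified linearization is strictly larger than the true tangent cone.

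What is missing is a mechanism by which observability compensates for this nonconvexity, and this is what the paper supplies.
\begin{itemize}
\item First it obtains $h^*=-B^*\pi$ with no assumption at all (\cref{hstar}). Every $h$ admits a tangent $v$ (\cref{lem:Theta-surj}) with $\|v\|\le\lambda^{-1}\|Bh\|$. Hence $h\mapsto\langle h^*,h\rangle$ factors continuously through $B$, and Hahn--Banach plus Riesz give $\pi$.
\item Set $\rho:=v^*+\eta(0)+\lambda\pi$. The polar inequality becomes $\langle\rho,\mathbf v(g)\rangle-\langle\pi,\mathbf s(g)\rangle\le 0$ for all $g\in\operatorname{ran}B$. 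Here $g\mapsto(\mathbf v(g),\mathbf s(g))$ is the Lipschitz solution map of the pointwise complementarity problem and plays the role of $(v,\lambda v+\psi(0))$. This map sweeps out the whole nonconvex set $\mathcal K$: take $g=b-(\lambda I+A_q)a$.
\item \Cref{obsass} says $\ker B^*=\{0\}$, i.e.\ $\operatorname{ran}B$ is dense. So the inequality extends by continuity to every $g\in L^2(\Omega)$, hence to all of $\mathcal K$. Testing with suitable $g$ then yields the sign conditions on $\pi$ and $\rho$.
\end{itemize}
Thus the assumption enters as density of the reachable traces $\operatorname{ran}B$, which makes the genuine tangent directions dense in the full pointwise relation. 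It is not used as an a priori bound on multipliers.
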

The observability assumption in the previous proposition might seem artificial, but it cannot be directly dispensed with without further assumptions.
\begin{theorem}\label{thmfailobs}
    If \cref{obsass} is not fulfilled, then the claim of \cref{Boucon} need not hold.
\end{theorem}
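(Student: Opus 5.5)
We will exhibit a concrete point $(q,u)\in\mathcal C$ at which \cref{obsass} fails and show that the set of pairs $(h^*,v^*)$ described by \eqref{sls} and the complementarity conditions does not coincide with $N_{\mathcal C}(q,u)$. The simplest candidate is the trivial datum $y_d=0$. Then $u=0$, $y=0$ and $\varphi=0$, so $\{|\varphi(0)|<\mu\}=\Omega$. For every $\pi$ we get $\mathcal O(\pi)=\int_\Omega(y\eta+\varphi\xi)\,dx=0$, so \cref{obsass} fails badly.

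In this case \cref{p1} gives the tangent cone directly. On the whole of $\Omega$ we have $v=0$, while $h$ is arbitrary, since $z$ and $\psi$ solve linear equations with $y=\varphi=0$ forcing. Hence $T_{\mathcal C}(q,0)=L^2(0,T)\times\{0\}$ and $N_{\mathcal C}(q,0)=\{0\}\times L^2(\Omega)$. On the multiplier side, the complementarity conditions give $\pi=0$ a.e., hence $\xi=\eta=0$ and $h^*=0$, while $\rho$ is unrestricted and $v^*=\rho$ is arbitrary. So here the representation actually holds, and this example is not enough.

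We therefore need a degenerate example in which \cref{obsass} fails but some $\pi\neq0$ is admissible. This requires a nonempty region where $|\varphi(0)|\ge\mu$ and an element $\pi$ supported there with $\mathcal O(\pi)=0$.

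The plan is to choose $q\equiv0$ and a special $y_d$, and to work with separation of variables in Dirichlet eigenfunctions $e_k$ with eigenvalues $\lambda_k$. Then $y$ and $\varphi$ are finite sums of terms $a_k(t)e_k$. For $\pi=\sum c_je_j$, the components $\xi$ and $\eta$ are explicit, and
\[
\mathcal O(\pi)(t)=\sum_k\bigl(a^y_k(t)\eta_k(t)+a^\varphi_k(t)\xi_k(t)\bigr).
\]
If $y$ and $\varphi$ involve only modes orthogonal to the span of $\pi$, then $\mathcal O(\pi)=0$. The difficulty is to reconcile this orthogonality with the requirement that $\pi$ be supported where $|\varphi(0)|\ge\mu$.

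A cleaner route is to pick $y_d$ so that $|\varphi(0)|>\mu$ on all of $\Omega$, for instance with $\varphi(0)$ a large multiple of a positive function plus a constant shift. This is feasible on a one-dimensional interval by prescribing $u$ and solving backward for a compatible $y_d$: choose $u$ with $u=-\lambda^{-1}(\varphi(0)\mp\mu)$ and set $y_d:=y-(-\partial_t\varphi-\Delta\varphi)$ for a chosen $\varphi$. Then $\pi$ is free and $\rho=0$. The kernel of $\mathcal O$ has to be made nontrivial by choosing $y$ and $\varphi$ in a finite set of modes, for example one mode each, $y=a(t)e_1$ and $\varphi=b(t)e_1+\text{const-like part}$. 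In that case only the $e_1$-coefficient of $\pi$ enters $\mathcal O$, so every $\pi\perp e_1$ lies in the kernel.

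With such a point, \cref{p1} shows that $T_{\mathcal C}$ is the graph of a bounded linear map $h\mapsto v$. The normal cone is then the annihilator
\[
N_{\mathcal C}(q,u)=\{(h^*,v^*)\mid \langle h^*,h\rangle+\langle v^*,v(h)\rangle=0\ \ \forall h\},
\]
which is a closed subspace isomorphic to $L^2(\Omega)$ via $v^*\mapsto(-v'(\cdot)^*v^*,v^*)$. The multiplier set is $\{(\mathcal O\pi,-\lambda\pi-\eta_\pi(0))\}$. Each such pair belongs to $N_{\mathcal C}$ by the duality computation from the easy direction of \cref{Boucon}, which does not use injectivity.

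The main obstacle is to show that this range is a proper subset, or is not closed. Take $\pi\in\ker\mathcal O$; this gives $(0,-\lambda\pi-\eta_\pi(0))$, and the operator $\pi\mapsto\lambda\pi+\eta_\pi(0)$ is coercive. The key step is to decide whether every element $(-v'(\cdot)^*v^*,v^*)$ of $N_{\mathcal C}$ is attained. We expect the argument to exploit the compactness of $\mathcal O$ on the complement of its kernel, which produces a non-closed range and hence failure. If that route fails, we fall back on the contact set: we choose the example so that $\{\varphi(0)=\mu\}$ has positive measure, where the tangent cone is nonconvex, and a convexity-based separation argument shows that some normal vector requires the sign conditions on $\pi$ and $\rho$ to be violated.
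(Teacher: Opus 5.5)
\textbf{The main route cannot succeed, for a reason the paper already proves.} If the contact set $\{|\varphi(0)|=\mu\}$ is Lebesgue-null, the sign conditions of \cref{Boucon} on it are vacuous. The multiplier system of \cref{Boucon} then coincides with that of \cref{Boucon2}, which characterizes $N_{\mathcal C}(q,u)$ \emph{without} \cref{obsass}. So at such points the claim of \cref{Boucon} holds however degenerate $\mathcal O$ is.

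Your own setup confirms this. With $|\varphi(0)|>\mu$ you have $\rho=0$ and $\pi$ free, and $\pi\mapsto-\lambda\pi-\eta_\pi(0)=-(\lambda I+A_q)\pi$ is an isomorphism of $L^2(\Omega)$. The coercivity you mention is exactly what makes every $v^*$ attainable (cf.\ \cref{L1-threshold}), not an obstruction. Since $T_{\mathcal C}(q,u)$ is the graph of a bounded linear map $L$, the normal cone $N_{\mathcal C}(q,u)=\{(-L^*v^*,v^*)\}$ is a graph over $v^*$. Multiplier pairs always lie in it, so $h^*=\mathcal O\pi$ is forced. There is no non-closed range, and compactness of $\mathcal O$ plays no role.

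The configuration is also unattainable as stated. $\varphi(0)\in H_0^1(0,1)$ by parabolic regularity, so it vanishes at the endpoints, and a ``constant shift'' is not admissible. Any counterexample therefore needs a contact set of positive measure.

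Your fallback points there but offers neither a construction nor a mechanism. The inclusion ``multipliers $\Rightarrow$ normal'' holds regardless of observability. So you must exhibit a specific polar element and prove that no admissible $(\pi,\rho)$ represents it. Nonconvexity of $T_{\mathcal C}$ plus a separation argument does not do that.

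\textbf{What is missing} is a device that makes $\mathcal O\pi=0$ incompatible with the sign conditions on a positive-measure contact set. The paper uses reflection symmetry.
\begin{itemize}
\item Setup: on $\Omega=(0,1)$ with $q=0$, $u=0$, $y=0$, $\lambda=1$, take $\varphi(t,x)=(1-t)g(x)$. Here $g$ is midpoint-odd, $g=-\mu$ on $I_-=[\frac18,\frac38]$, $g=\mu$ on the mirror interval $I_+$, $|g|<\mu$ elsewhere, and $\langle g,e_{2k}\rangle\neq0$ for all $k$ (\cref{conlem}).
\item Kernel: $\ker\mathcal O$ is then exactly the midpoint-even functions.
\item Normal vector: each tangent $v$ is the unique fixed point of a strict contraction that commutes with the reflection $w\mapsto -w(1-\cdot)$. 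Hence every tangent $v$ is midpoint-odd, so $(h^*,v^*)=(0,1)\in N_{\mathcal C}(0,0)$.
\item Contradiction: a representation would need $\mathcal O\pi=h^*=0$, so $\pi$ is midpoint-even. Together with $\pi\ge0$ on $I_-$, $\pi\le0$ on $I_+$ and $\pi=0$ elsewhere, this forces $\pi=0$. Then $\rho=v^*=1$, contradicting $\rho\le0$ on $I_-$.
\end{itemize}
The condition $\langle g,e_{2k}\rangle\neq0$ is essential. It excludes midpoint-odd elements of $\ker\mathcal O$, such as the difference of the indicators of $I_-$ and $I_+$, which would otherwise be compatible with the sign conditions.
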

If the contact set induced by the adjoint variable has zero Lebesgue measure,
then any admissible perturbation supported there vanishes almost everywhere.
Consequently, no nontrivial \emph{invisible} directions can occur, and the polar
condition does not require the observability assumption introduced above.

\begin{proposition}\label{Boucon2}
    Let $(q,u)\in\mathcal C$, and denote by $(y,\varphi)\in W(0,T)\times W(0,T)$
    its associated state--adjoint pair. Suppose that the set $\{|\varphi(0)|=\mu\}$ has zero Lebesgue measure. Then a pair $(h^*,v^*)\in L^2(0,T)\times L^2(\Omega)$ belongs to $N_{\mathcal C}(q,u)$ if and only if there exist
    $\xi,\eta\in W(0,T)$ and $\pi,\rho\in L^2(\Omega)$ satisfying \eqref{sls} such that
    \[
        \pi=0\,\,\,\, \text{a.e.}\,\,\text{on}\,\,\,\,\{|\varphi(0)|<\mu\}\quad \text{and}\quad \rho=0
        \,\,\,\, \text{a.e.}\,\, \text{on} \,\,\,\,\{|\varphi(0)|>\mu\}.
    \]
\end{proposition}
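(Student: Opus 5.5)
Under the nullity hypothesis, \cref{p1} makes the tangent cone a closed linear subspace, and the normal cone (its polar) is then the annihilator of that subspace. So the plan is to compute this annihilator by a duality argument; the pointwise sign conditions then disappear.

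\emph{Step 1: the tangent cone as a kernel.} Since $\{|\varphi(0)|=\mu\}$ is a null set, the case distinction in \eqref{Btclin} reduces to $v=0$ on $I:=\{|\varphi(0)|<\mu\}$ and $v=-\frac1\lambda\psi(0)$ on $A:=\{|\varphi(0)|>\mu\}$. Thus $T_{\mathcal C}(q,u)$ is the kernel of the bounded linear map
\[
G(h,v):=\bigl(\chi_I v,\ \chi_A(\lambda v+\psi(0))\bigr),
\]
where $\psi=\psi(h,v)$ solves the linearized system in \eqref{Btclin}. This kernel is a closed subspace, so $N_{\mathcal C}(q,u)=T_{\mathcal C}(q,u)^\circ=(\ker G)^\perp$. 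Consistently with \cref{Ctc}, it makes no difference whether the Fréchet or the limiting normal cone is meant.

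\emph{Step 2: the ``if'' direction.} Take $(h^*,v^*)$ given by \eqref{sls}, with $\pi$ supported in $\{|\varphi(0)|\ge\mu\}$ and $\rho$ supported in $\{|\varphi(0)|\le\mu\}$. Pair $(h,v)\in\ker G$ against it, testing the $z$-equation with $\eta$, the $\psi$-equation with $\xi$, and integrating by parts in time. Since $\xi$ solves the homogeneous forward equation and $\eta$ the backward one, this should give the identity
\[
\int_0^T h\int_\Omega (y\eta+\varphi\xi)\,dx\,dt+\int_\Omega \eta(0)v\,dx+\int_\Omega\psi(0)\pi\,dx=0 .
\]
Hence $\langle h^*,h\rangle+\langle v^*,v\rangle=\int\rho v-\lambda\int\pi v-\int\psi(0)\pi=\int\rho v-\int\pi(\lambda v+\psi(0))$. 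Both terms vanish on $\ker G$ by the support properties, so $(h^*,v^*)$ annihilates the tangent space. I will verify the signs carefully in this integration by parts.

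\emph{Step 3: the ``only if'' direction.} For the converse, $(\ker G)^\perp=\overline{\operatorname{ran}G^*}$. By Step 2's computation, $G^*(a,b)$ has exactly the form \eqref{sls} with $\rho=\chi_I a+\cdots$ and $\pi=-\chi_A b$ (after absorbing constants). The main obstacle is closedness of $\operatorname{ran} G^*$, which I would obtain by showing $G$ is surjective.

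\emph{Main obstacle: surjectivity of $G$.} Given $(f,g)$, set $h=0$. We need $v$ with $v=f$ on $I$ and $\lambda v+\psi(0)=g$ on $A$, where $\psi(0)=Kv$ for a compact, self-adjoint, nonnegative operator $K$ (namely $K=S^*S$ for the forward map $S$). On $A$ this is the equation $(\lambda+\chi_A K\chi_A)v_A=g-\chi_A K\chi_I f$, which is uniquely solvable by Lax–Milgram because $\lambda>0$ and $K\ge0$. So $G$ is surjective, $\operatorname{ran}G^*$ is closed by the closed range theorem, and every normal has the stated multiplier representation.

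Observability is not needed here because the two regions $I$ and $A$ carry no one-sided constraints. The sign-constrained part in \cref{Boucon} is exactly where injectivity of $\mathcal O$ was required.
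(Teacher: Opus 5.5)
Your argument is correct, and it reaches the result by a more abstract route than the paper.

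\textbf{The paper's route.} It works directly on the dual side. Given a normal $(h^*,v^*)$, \cref{L1-threshold} applies Lax--Milgram on $\{r\in L^2(\Omega): r=0 \text{ a.e. on } I\}$ to choose $\pi$ so that $\rho:=v^*+\lambda\pi+\eta(0)$ vanishes on $A$. The mixed identity then gives $\int_0^T(h^*-H)h\,dt\le 0$ for every tangent direction. By \cref{lem:Theta-surj}, every $h$ (and $-h$) is the first component of some tangent direction, so $h^*=H$. The converse inclusion is a pointwise check.

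\textbf{Your route.} You write $T_{\mathcal C}(q,u)=\ker G$ and show that $G$ maps onto $L^2(I)\times L^2(A)$; that space, not $L^2(\Omega)^2$, has to be the codomain. The closed range theorem then gives $N_{\mathcal C}(q,u)=(\ker G)^\perp=\operatorname{ran}G^*$. Reading off $G^*(a,b)$ gives the multipliers $\rho=\chi_I a$ and $\pi=-\chi_A b$.

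\textbf{Comparison.} The analytic core is the same in both: coercivity of $\lambda I+A_q$ compressed to functions supported on $A$. The paper inverts this operator in the multiplier equation, while you invert it in the primal constraint equation. Your surjectivity of $v\mapsto G(0,v)$ in particular implies \cref{lem:Theta-surj}. What your packaging buys:
\begin{itemize}
\item both inclusions come out at once;
\item the multipliers $(\pi,\rho)$ are unique, since $G^*$ is injective;
\item it shows transparently that, under the nullity assumption, the lifted constraint behaves like a surjective linear equality constraint, which is why no observability-type condition is needed.
\end{itemize}
The paper's version avoids the closed-range machinery and reuses the lemmas and the mixed identity already set up for \cref{Boucon}.

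\textbf{One slip to fix.} The identity displayed in your Step 2 has the wrong sign on the $\eta(0)$ term. Testing correctly gives $\int_0^T h\int_\Omega(y\eta+\varphi\xi)\,dx\,dt=\int_\Omega\eta(0)v\,dx-\int_\Omega\psi(0)\pi\,dx$. Your next line, and hence your formula for $G^*$, already matches this corrected identity, so nothing downstream changes.

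The aside about Fr\'echet versus limiting normal cones is unnecessary, since the statement concerns only $N_{\mathcal C}=T_{\mathcal C}^\circ$.
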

Combining the previous proposition with \cref{Ctc}, we obtain the following consequence.
\begin{corollary}\label{prop:ClarkeNormal}
    Let $(q,u)\in\mathcal C$ be given and denote by $\varphi\in W(0,T)$ its associated adjoint.   Suppose that the set $\{|\varphi(0)|=\mu\}$ has zero Lebesgue measure. Then the Clarke normal cone of $\mathcal C$ at $(q,u)$ coincides with the polar of the Bouligand tangent cone of $\mathcal C$ at $(q,u)$, in symbols, $\hat N_{\mathcal C}(q,u) = N_{\mathcal C}(q,u)$.
\end{corollary}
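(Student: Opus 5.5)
The plan is to apply the general fact that the Clarke normal cone is the polar of the Clarke tangent cone: $\hat N_{\mathcal C}(q,u) = \hat T_{\mathcal C}(q,u)^\circ$. Combining this with \cref{Ctc} settles the claim almost immediately. Under the hypothesis that $\{|\varphi(0)|=\mu\}$ has zero Lebesgue measure, \cref{Ctc} gives $\hat T_{\mathcal C}(q,u)=T_{\mathcal C}(q,u)$. Taking polars of both sides then yields
\[
    \hat N_{\mathcal C}(q,u)=\hat T_{\mathcal C}(q,u)^\circ = T_{\mathcal C}(q,u)^\circ = N_{\mathcal C}(q,u),
\]
where $N_{\mathcal C}(q,u)$ denotes, as in the paper, the polar of the Bouligand tangent cone.

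The one point to check is which definition of the Clarke normal cone the paper uses. If it is defined as the polar of $\hat T_{\mathcal C}$, the argument above is complete. If it is defined differently, for instance as the closed convex hull of limiting normals, I would instead invoke the standard duality in Hilbert spaces: for a closed set in a Hilbert space the Clarke tangent cone is the polar of the Clarke normal cone, and conversely, since $\hat T$ is closed and convex. Closedness of $\mathcal C$ is available from \cref{topc}, because weak sequential closedness implies strong closedness. Taking polars then again reduces the claim to \cref{Ctc}.

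As a consistency check, I would note that the resulting cone coincides with the explicit representation of \cref{Boucon2}. This cone is convex, as any polar cone must be, and it is parametrized by $\pi$ supported off $\{|\varphi(0)|<\mu\}$ and $\rho$ supported off $\{|\varphi(0)|>\mu\}$. No step poses a real obstacle; the only delicate point is the definitional issue above, namely making sure the bipolar relation between the Clarke cones holds in $L^2(0,T)\times L^2(\Omega)$. This holds because the space is a Hilbert space and $\mathcal C$ is closed.
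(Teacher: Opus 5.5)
Your argument is correct and is exactly the paper's proof. The paper defines $\hat N_{\mathcal C}(q,u):=\hat T_{\mathcal C}(q,u)^\circ$, so the claim follows by taking polars in the identity $\hat T_{\mathcal C}(q,u)=T_{\mathcal C}(q,u)$ from \cref{Ctc}, and your fallback discussion of alternative definitions and of the closedness of $\mathcal C$ is not needed.
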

A natural question proceeding from the previous propositions is whether their hypotheses are related; namely, whether the nullity of the contact set induced by the adjoint variable implies the observability condition above, or vice versa. It turns out they are independent, in the sense that neither implies the other.
\begin{theorem}\label{thmindass}
    Let $(q,u)\in\mathcal C$ be given, and let $\varphi\in W(0,T)$
    denote its associated adjoint. The following statements hold.
    \begin{enumerate}[label=(\roman*),nosep]
        \item \Cref{obsass} does not imply that the set
            $\{|\varphi(0)|=\mu\}$ has zero Lebesgue measure.
        \item The condition that the set
            $\{|\varphi(0)|=\mu\}$ has zero Lebesgue measure does not imply
            \Cref{obsass}.
    \end{enumerate}
\end{theorem}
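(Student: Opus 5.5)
Both parts are existence-of-counterexample statements, so I would prove them by exhibiting explicit data $(\Omega,T,\lambda,\mu,y_d,q)$ for which one hypothesis holds and the other fails. The simplest framework is to take $q\equiv 0$ and to work with the Dirichlet Laplacian eigenbasis $(e_k,\sigma_k)$ of $\Omega$. Then the state, the adjoint, and the auxiliary pair $(\xi,\eta)$ of \cref{obsass} are all explicit series. The key simplification is to choose $y_d$ so that the lower-level solution is $u=0$. In that case $y\equiv 0$, and $\mathcal O(\pi)(t)=\int_\Omega\varphi(t)\xi(t)\,dx$ depends only on $\varphi$. Moreover, $(0,0)\in\mathcal C$ with $q=0$ holds exactly when $|\varphi(0)|\le\mu$ a.e., where $\varphi$ solves the backward equation with right-hand side $-y_d$.

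For (ii), I want the contact set to be null while observability fails. I take $y_d=-c\,\theta(t)e_1(x)$ with $\theta\ge 0$ smooth and $c>0$ small. Then $\varphi(t)=c\,\beta(t)e_1$, where $\beta(t)=\int_t^T e^{-\sigma_1(s-t)}\theta(s)\,ds$, so $|\varphi(0)|=c\beta(0)|e_1|<\mu$ everywhere for $c$ small. Thus $u=0$, and the contact set is empty, hence null. Now take $\pi=e_2$. Then $\xi(t)=e^{-\sigma_2 t}e_2$, and by orthogonality of $e_1$ and $e_2$, $\mathcal O(e_2)(t)=c\beta(t)e^{-\sigma_2 t}\int_\Omega e_1e_2\,dx=0$. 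So $\mathcal O$ is not injective and \cref{obsass} fails. This part is routine.

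For (i), I need observability to hold together with a contact set of positive measure. Because $\varphi(0)$ is smooth in the interior, it is easiest to get a contact set of positive measure in dimension $d=1$ via a whole-domain contact: choose $y_d$ so that $|\varphi(0)|\equiv\mu$ fails (eigenfunctions vanish at the boundary). So I would instead use a generic datum. I take $y_d=-\sum_k a_k\theta_k(t)e_k$ and prescribe $\varphi$ directly: pick any smooth $\varphi$ with $\varphi(T)=0$, $\varphi=0$ on $\Sigma$, $|\varphi(0)|\le\mu$, and $\varphi(0)=\mu$ on a subinterval $I$. Then define $y_d:=\partial_t\varphi+\Delta\varphi$, so that $u=0$ and $y=0$ are consistent. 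This reduces the problem to showing that $\pi\mapsto\int_\Omega\varphi(t)\xi(t)\,dx$ is injective for a suitably chosen $\varphi$. Writing $\varphi(t)=\sum_k\varphi_k(t)e_k$, I get $\mathcal O(\pi)(t)=\sum_k\pi_k e^{-\sigma_k t}\varphi_k(t)$.

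I would choose $\varphi_k(t)=b_k g(t)$, with $g$ smooth, $g(T)=0$, $g(0)=1$, and all $b_k\neq 0$. Injectivity then reduces to the statement that $\sum_k\pi_kb_ke^{-\sigma_kt}=0$ on $(0,T)$ forces $\pi=0$. This holds by uniqueness for Dirichlet series with distinct exponents, since the coefficients are $\ell^1$-summable after multiplication by $e^{-\sigma_k t}$ for $t>0$, and analytic continuation in $t$ applies. The remaining requirement is that $\varphi(0)=\sum_kb_ke_k$ satisfy $|\varphi(0)|\le\mu$, equal $\mu$ on $I$, and have all Fourier coefficients nonzero. The main obstacle is this last requirement. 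I would handle it by taking a smooth plateau function $f$ with $0\le f\le\mu$ and $f=\mu$ on $I$, and then adding a tiny smooth perturbation supported away from $I$ that keeps $|f|\le\mu$ and makes every coefficient nonzero, for instance by a Baire-category or genericity argument over a one-parameter family. Finally I would check that $y_d\in L^2(Q)$ and that $\varphi\in W(0,T)$, which holds since everything is smooth.
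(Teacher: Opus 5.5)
Your proposal is correct and follows essentially the same route as the paper. For (ii) you use a single-mode adjoint $\varphi=c\beta(t)e_1$ kept below the threshold, with $\pi=e_2\in\ker\mathcal O$. For (i) you take a separated adjoint $\varphi=g(t)\varphi_0$ with $y_d:=\partial_t\varphi+\Delta\varphi$, where $\varphi_0$ has a plateau at $\mu$ and all sine coefficients nonzero, and conclude by uniqueness of Dirichlet series. The nonzero coefficients come, exactly as in the paper's \cref{lem:yate}, from a Baire-category perturbation supported where $\varphi_0$ stays strictly inside $(-\mu,\mu)$.

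Of your two suggested ways to obtain the nonzero coefficients, the Baire argument works directly. A single one-parameter family $f+s\theta_0$ works only if, for every $k$, either $\langle f,e_k\rangle\neq0$ or $\langle\theta_0,e_k\rangle\neq0$, and that is not automatic.
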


\subsection{A first-order necessary condition}
We now employ the geometric characterization obtained in the previous subsection to derive a first-order necessary optimality condition for the bilevel problem \eqref{cost}--\eqref{Bip}. The argument relies on transferring local optimality from the reduced bilevel formulation to the lifted formulation and then applying \cref{Boucon}.
\begin{theorem}\label{thm:nec-F}
    Let $\bar q\in L^2(0,T)$ be a local minimizer of the bilevel problem \eqref{cost}--\eqref{Bip}, and set
    $\bar u:=u_{\bar q}$. Let the pair $(\bar y,\bar\varphi)\in W(0,T)\times W(0,T)$ satisfy
    \begin{equation}\label{opt:lower}
        \left\{
            \begin{gathered}
                \left\{
                    \begin{aligned}
                        \partial_t \bar y - \Delta \bar y + \bar q(t)\,\bar y &= 0
                        && \text{in }Q,\\
                        \bar y &= 0
                        && \text{on }\Sigma,\\
                        \bar y(0) &= \bar u
                        && \text{in }\Omega,
                    \end{aligned}
                \right.
                \qquad
                \left\{
                    \begin{aligned}
                        -\partial_t \bar\varphi - \Delta \bar\varphi + \bar q(t)\,\bar\varphi
                        &= \bar y - y_d
                        && \text{in }Q,\\
                        \bar\varphi &= 0
                        && \text{on }\Sigma,\\
                        \bar\varphi(T) &= 0
                        && \text{in }\Omega,
                    \end{aligned}
                \right.
                \\[1em]
                \bar u
                =
                \operatorname{prox}_{(\mu/\lambda)|\cdot|}
                \!\left(-\frac1\lambda \bar\varphi(0)\right)
                \qquad \text{a.e.\ in }\Omega.
            \end{gathered}
        \right.
    \end{equation}
    Suppose that $(\bar q,\bar u)$ satisfies \cref{obsass}. Then there exist $\xi,\eta\in W(0,T)$ and $\pi,\rho\in L^2(\Omega)$ satisfying
    \begin{equation}\label{opt:upper}
        \left\{
            \begin{gathered}
                \left\{
                    \begin{aligned}
                        \partial_t\xi - \Delta\xi + \bar q(t)\,\xi &= 0
                        && \text{in }Q,\\
                        \xi &= 0
                        && \text{on }\Sigma,\\
                        \xi(0) &= \pi
                        && \text{in }\Omega,
                    \end{aligned}
                \right.
                \qquad
                \left\{
                    \begin{aligned}
                        -\partial_t\eta - \Delta\eta + \bar q(t)\,\eta &= \xi
                        && \text{in }Q,\\
                        \eta &= 0
                        && \text{on }\Sigma,\\
                        \eta(T) &= 0
                        && \text{in }\Omega,
                    \end{aligned}
                \right.
                \\[1em]
                \bar u - u_d+\rho-\lambda\pi-\eta(0)=0
                \qquad \text{a.e. in }\Omega,
                \\[1em]
                \alpha\,\bar q(t)
                +
                \int_\Omega\bigl(\bar y(t)\,\eta(t)+\bar\varphi(t)\,\xi(t)\bigr)\,dx
                = 0
                \qquad \text{for a.e.\ }t\in(0,T),
            \end{gathered}
        \right.
    \end{equation}
    and the complementarity conditions
    \begin{equation}\label{comcon}
        \left\{\,\,\,\,
            \begin{gathered}
                \pi=0 \ \text{a.e.\ on } \{|\bar\varphi(0)|<\mu\},\qquad
                \pi\ge 0 \ \text{a.e.\ on } \{\bar\varphi(0)=-\mu\},\qquad
                \pi\le 0 \ \text{a.e.\ on } \{\bar\varphi(0)=\mu\},
                \\[0.5em]
                \rho=0 \ \text{a.e.\ on } \{|\bar\varphi(0)|>\mu\},\qquad
                \rho\le 0 \ \text{a.e.\ on } \{\bar\varphi(0)=-\mu\},\qquad
                \rho\ge 0 \ \text{a.e.\ on } \{\bar\varphi(0)=\mu\}.
            \end{gathered}
        \right.
    \end{equation}
\end{theorem}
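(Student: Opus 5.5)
The plan is to pass from the reduced bilevel problem to the lifted formulation, apply the basic first-order condition for a smooth objective over a closed set, and then read off the multipliers from \cref{Boucon}.

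First, as noted after the definition of $\Phi$, since $\bar q$ is a local minimizer of \eqref{Bip}, the pair $(\bar q,\bar u)$ is a local minimizer of $\Phi$. Equivalently, it locally minimizes the smooth functional
\[
J(q,u):=\tfrac12\norm{u-u_d}^2_{L^2(\Omega)}+\tfrac\alpha2\norm{q}^2_{L^2(0,T)}
\]
over $\mathcal C$. The justification is as follows. A neighbourhood of $(\bar q,\bar u)$ in $L^2(0,T)\times L^2(\Omega)$ intersected with $\mathcal C$ consists of pairs $(q,u_q)$ with $q$ near $\bar q$. Hence $J(q,u_q)\ge J(\bar q,\bar u)$ follows directly from local optimality of $\bar q$; no continuity of $q\mapsto u_q$ is even needed in this direction.

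Second, we derive the primal condition. Let $(h,v)\in T_{\mathcal C}(\bar q,\bar u)$, and take $t_k\downarrow 0$ and $(h_k,v_k)\to(h,v)$ with $(\bar q,\bar u)+t_k(h_k,v_k)\in\mathcal C$. Since $J$ is Fréchet differentiable, local optimality gives
\[
0\le \frac{J\bigl((\bar q,\bar u)+t_k(h_k,v_k)\bigr)-J(\bar q,\bar u)}{t_k}\longrightarrow J'(\bar q,\bar u)(h,v).
\]
Hence
\[
-\nabla J(\bar q,\bar u)=-(\alpha\bar q,\ \bar u-u_d)\in T_{\mathcal C}(\bar q,\bar u)^\circ=N_{\mathcal C}(\bar q,\bar u),
\]
using that $N_{\mathcal C}$ in \cref{Boucon} denotes the polar of the Bouligand tangent cone.

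Third, we invoke \cref{Boucon} with $(h^*,v^*)=(-\alpha\bar q,\,u_d-\bar u)$. This is legitimate because \cref{obsass} holds at $(\bar q,\bar u)$, and the state--adjoint pair there is exactly $(\bar y,\bar\varphi)$ from \eqref{opt:lower}. The proposition yields $\xi,\eta\in W(0,T)$ and $\pi,\rho\in L^2(\Omega)$ solving the $\xi$- and $\eta$-equations of \eqref{sls}, with the complementarity conditions, which are exactly \eqref{comcon}. The two relations of \eqref{sls} then read
\[
u_d-\bar u=\rho-\lambda\pi-\eta(0),\qquad -\alpha\bar q=\int_\Omega(\bar y\eta+\bar\varphi\xi)\,dx .
\]
These are precisely the remaining two lines of \eqref{opt:upper}.

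The only step needing care is the first one, namely that local optimality transfers to the lifted problem and that the relevant normal cone is the Bouligand polar. Both are already essentially recorded in the paper; the genuine difficulty, the normal cone representation, is entirely contained in \cref{Boucon}. The sign conventions in the final identification should be checked, but they are routine.
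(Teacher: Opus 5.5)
Your proposal is correct and follows the paper's proof essentially step for step: you transfer local optimality to the lifted problem, use the tangent-cone first-order condition to get $(-\alpha\bar q,\,u_d-\bar u)\in N_{\mathcal C}(\bar q,\bar u)$, and read off the multipliers and \eqref{comcon} from \cref{Boucon}. You also spell out two justifications the paper leaves implicit: that any point of $\mathcal C$ near $(\bar q,\bar u)$ has the form $(q,u_q)$ with $q$ near $\bar q$, and the difference-quotient derivation of the primal condition.
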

Analogously to the previous subsection, using \cref{Boucon2}, we can replace the observability assumption in the previous theorem by requiring the Lebesgue nullity of the contact set generated by the adjoint variable to obtain the same stationarity condition.

\begin{theorem}\label{thm:nec-FN}
    Let $\bar q\in L^2(0,T)$ be a local minimizer of the bilevel problem \eqref{cost}--\eqref{Bip}, and set
    $\bar u:=u_{\bar q}$. Let the pair  $(\bar y,\bar\varphi)\in W(0,T)\times W(0,T)$ satisfy \eqref{opt:lower}. Suppose that the set $\{|\bar\varphi(0)|=\mu\}$ has zero Lebesgue measure. Then there exist $\xi,\eta\in W(0,T)$ and $\pi,\rho\in L^2(\Omega)$ satisfying \eqref{opt:upper} such that
    \[
        \pi=0\,\,\,\, \text{a.e.}\,\,\text{on}\,\,\,\,\{|\bar\varphi(0)|<\mu\}\quad \text{and}\quad \rho=0
        \,\,\,\, \text{a.e.}\,\, \text{on} \,\,\,\,\{|\bar\varphi(0)|>\mu\}.
    \]
\end{theorem}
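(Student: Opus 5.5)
The plan mirrors the argument for \cref{thm:nec-F}, with \cref{Boucon2} replacing \cref{Boucon}.

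First, transfer local optimality to the lifted problem. Since $\mathcal C$ is the graph of $q\mapsto u_q$, local optimality of $\bar q$ gives that $(\bar q,\bar u)$ is a local minimizer of $\Phi$. The reduction from a neighbourhood in $q$ to a neighbourhood in $(q,u)$ is immediate: any $(q,u)\in\mathcal C$ close to $(\bar q,\bar u)$ has $q$ close to $\bar q$ and $u=u_q$. Write $\Phi = J + \iota_{\mathcal C}$, where
\[
J(q,u)=\tfrac12\norm{u-u_d}^2+\tfrac\alpha2\norm{q}^2 .
\]
The functional $J$ is continuously Fréchet differentiable, with $\nabla J(\bar q,\bar u)=(\alpha\bar q,\bar u-u_d)$.

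Second, derive the standard first-order condition. For any $(h,v)\in T_{\mathcal C}(\bar q,\bar u)$, choose $t_k\downarrow 0$ and $(h_k,v_k)\to(h,v)$ with $(\bar q+t_kh_k,\bar u+t_kv_k)\in\mathcal C$. Local minimality and a Taylor expansion of $J$ give
\[
\langle \nabla J(\bar q,\bar u),(h,v)\rangle\ge 0 .
\]
Hence $-\nabla J(\bar q,\bar u)$ lies in the polar of the Bouligand tangent cone, that is, $(-\alpha\bar q,\,u_d-\bar u)\in N_{\mathcal C}(\bar q,\bar u)$ in the paper's notation.

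Third, apply \cref{Boucon2} at $(\bar q,\bar u)$. This is legitimate because $\{|\bar\varphi(0)|=\mu\}$ is a null set by hypothesis, and $(\bar y,\bar\varphi)$ is the associated state--adjoint pair by \eqref{opt:lower} and uniqueness of solutions of the linear parabolic problems. The proposition yields $\xi,\eta\in W(0,T)$ and $\pi,\rho\in L^2(\Omega)$ solving the two linear systems in \eqref{sls}, together with
\[
u_d-\bar u=\rho-\lambda\pi-\eta(0)
\quad\text{and}\quad
-\alpha\bar q=\int_\Omega(\bar y\eta+\bar\varphi\xi)\,dx ,
\]
and with $\pi=0$ on $\{|\bar\varphi(0)|<\mu\}$ and $\rho=0$ on $\{|\bar\varphi(0)|>\mu\}$.

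Finally, match signs with \eqref{opt:upper}. The first relation reads $\bar u-u_d+\rho-\lambda\pi-\eta(0)=0$ only after replacing $(\pi,\rho,\xi,\eta)$ by their negatives; this is permitted because the systems are linear and the remaining conditions are only vanishing conditions, which are invariant under sign change. Doing so, however, also flips the sign in the second relation. So the sign convention must be checked carefully: I would verify whether the normal-cone element is $-\nabla J$ or $\nabla J$ relative to how \cref{Boucon2} is stated, and choose accordingly. The main obstacle is this sign bookkeeping, since both equations in \eqref{opt:upper} must hold simultaneously with the same multipliers. If a mismatch appears, I would return to the proof of \cref{Boucon2} to confirm its convention for $v^*$ and $h^*$. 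The remaining steps are routine.
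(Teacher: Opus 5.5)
Your approach is the paper's: transfer local optimality to $(\bar q,\bar u)$ on $\mathcal C$, use the Bouligand first-order condition to get $(-\alpha\bar q,\,u_d-\bar u)\in N_{\mathcal C}(\bar q,\bar u)$, and invoke \cref{Boucon2}. Your first three steps are correct. The problem is the last paragraph. It rests on an algebra slip and leaves the proof unfinished.

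With $v^*=u_d-\bar u$, the identity $v^*=\rho-\lambda\pi-\eta(0)$ from \cref{Boucon2} reads $u_d-\bar u=\rho-\lambda\pi-\eta(0)$. Subtracting $u_d-\bar u$ from both sides gives
\[
\bar u-u_d+\rho-\lambda\pi-\eta(0)=0,
\]
which is already the third line of \eqref{opt:upper}. Likewise, $h^*=-\alpha\bar q$ gives $\alpha\bar q+\int_\Omega(\bar y\eta+\bar\varphi\xi)\,dx=0$, which is the fourth line. The multipliers delivered by \cref{Boucon2} therefore satisfy \eqref{opt:upper} verbatim, together with the two vanishing conditions, and no sign change is needed.

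The negation you propose would in fact be wrong. Replacing $(\pi,\rho,\xi,\eta)$ by their negatives turns the left-hand sides of these two identities into $2(\bar u-u_d)$ and $2\alpha\bar q$. These vanish only if $\bar u=u_d$ and $\bar q=0$. In the setting of \cref{thm:nec-F}, the sign conditions in \eqref{comcon} would not even allow such a flip, so this bookkeeping matters. Drop the hedging in your final paragraph and read off the identities directly; the argument is then complete and coincides with the paper's proof.
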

We mention that the construction in the proof of \cref{thmfailobs} can be used straightforwardly to construct a bilevel problem for which both the nullity of the contact set and the observability condition (\cref{obsass}) fail at a local minimizer, and for which there are no multipliers satisfying \eqref{comcon}.

\section{Notation and preliminaries}\label{sec:prems}

In this section, we briefly collect the essential definitions and standard results needed in the following.

\subsection{Function spaces}
We use the Gelfand triple
\[
    H_0^1(\Omega)\hookrightarrow L^2(\Omega)\equiv L^2(\Omega)^*\hookrightarrow H^{-1}(\Omega),
\]
where both embeddings are dense and continuous.  The duality
pairing between $H^{-1}(\Omega)$ and $H_0^1(\Omega)$ is denoted by
$\langle\cdot,\cdot\rangle$, and when there is no risk of confusion, also the $L^2(\Omega)$
inner product.

We consider the space $W(0,T):=\bigl\{y\in L^2(0,T;H_0^1(\Omega)):\ \partial_t y\in L^2(0,T;H^{-1}(\Omega))\bigr\}$
endowed with its usual norm so that we have the continuous embedding $W(0,T)\hookrightarrow C([0,T];L^2(\Omega))$.
In particular, every $y\in W(0,T)$ admits traces
$y(0),y(T)\in L^2(\Omega)$, and the conditions of the type $y(0)=u$ and $\varphi(T)=0$
appearing below are understood in this trace sense.

If $y,\psi\in W(0,T)$, then the map
$t\longmapsto \langle y(t),\psi(t)\rangle$ is absolutely continuous on $[0,T]$ and
\[
    \frac{d}{dt}\langle y(t),\psi(t)\rangle
    =
    \langle \partial_t y(t),\psi(t)\rangle
    +
    \langle \partial_t \psi(t),y(t)\rangle
    \quad\text{for a.e.\ }t\in(0,T).
\]
Integrating in time yields the formula
\begin{equation}\label{eq:ibp-W}
    \int_0^T \langle \partial_t y(t),\psi(t)\rangle\,dt
    +\int_0^T \langle \partial_t \psi(t),y(t)\rangle\,dt
    =
    \langle y(T),\psi(T)\rangle-\langle y(0),\psi(0)\rangle .
\end{equation}
Finally, since $W(0,T)\hookrightarrow L^\infty(0,T;L^2(\Omega))$, for every
$q\in L^2(0,T)$ and $y\in W(0,T)$ we have
\[
    q\,y\in L^2(Q)\quad \text{and}
    \quad
    \|q\,y\|_{L^2(Q)}
    \le
    \|q\|_{L^2(0,T)}\,\|y\|_{L^\infty(0,T;L^2(\Omega))}.
\]
In particular, the lower-order term $q(t)y(t)$ defines an element of
$L^2(0,T;H^{-1}(\Omega))$ through the canonical embedding
$L^2(\Omega)\hookrightarrow H^{-1}(\Omega)$.

\subsection{Mild and weak solutions}
Recall that we write $Q:=(0,T)\times\Omega$ and $\Sigma:=(0,T)\times\partial\Omega$.
Let $-\Delta:H_{0}^1(\Omega)\cap H^2(\Omega)\to L^2(\Omega)$ denote the Laplacian with homogeneous Dirichlet boundary condition, i.e., the nonnegative self-adjoint operator associated with the bilinear form
$a:H_0^1(\Omega)\times H_0^1(\Omega)\to\mathbb R$ given by
$a(y,v):=\int_\Omega \nabla y\cdot \nabla v\,dx$.  We denote by $(e^{t\Delta})_{t\ge0}$ the strongly continuous
contraction semigroup on $L^2(\Omega)$ generated by $\Delta$.
Thus, if $u=\sum_{k=1}^\infty u_k\varphi_k$ in $L^2(\Omega)$, where \((\lambda_k,\varphi_k)_{k\ge1}\) are the Dirichlet eigenpairs of \(-\Delta\), then
\[
    e^{t\Delta}u:=\sum_{k=1}^\infty e^{-\lambda_k t}u_k\varphi_k.
\]
We refer to $(e^{t\Delta})_{t\ge0}$ as the Dirichlet heat semigroup. This semigroup extends uniquely to a strongly
continuous semigroup on \(H^{-1}(\Omega)\), and we denote this extension by the
same symbol.

Let $q\in L^2(0,T)$. Since $q\in L^1(0,T)$ as well, the scalar integrating factor
$m_q:[0,T]\times[0,T]\to\mathbb R$ defined by
$m_q(t,s):=\exp\!\bigl(-\int_s^t q(r)\,dr\bigr)$ for $0\le s\le t\le T$ is well-defined.
If $u\in L^2(\Omega)$ and $f\in L^2(0,T;H^{-1}(\Omega))$, we call
$y\in C([0,T];L^2(\Omega))$ a \emph{mild solution} of
\begin{align}\label{samplepde}
    \left\{
        \begin{aligned}
            \partial_t y - \Delta y + q(t)y &= f && \text{in } Q,\\
            y &= 0 && \text{on } \Sigma,\\
            y(0) &= u && \text{in } \Omega.
        \end{aligned}
    \right.
\end{align}
if, for every $t\in[0,T]$, one has $y(t)=m_q(t,0)e^{t\Delta}u+\int_0^t m_q(t,s)\,e^{(t-s)\Delta}f(s)\,ds$, where the integral is understood as a Bochner integral in $H^{-1}(\Omega)$.
This is simply the Duhamel formula after removal of the zero-order term by
an integrating factor.

For the weak formulation, observe first that if $y\in W(0,T)$, then
$y\in L^\infty(0,T;L^2(\Omega))$ by the continuous embedding
$W(0,T)\hookrightarrow C([0,T];L^2(\Omega))$. Hence, for $q\in L^2(0,T)$,
the product $q\,y$ belongs to $L^2(Q)$, and therefore also to
$L^2(0,T;H^{-1}(\Omega))$. Given $q\in L^2(0,T)$, $u\in L^2(\Omega)$, and
$f\in L^2(0,T;H^{-1}(\Omega))$, by a \emph{weak solution} of \eqref{samplepde}
we mean a function $y\in W(0,T)$ with $y(0)=u$ such that for a.e.\ $t\in(0,T)$
and every $v\in H_0^1(\Omega)$,
\begin{equation}\label{eq:weak-parabolic}
    \langle \partial_t y(t),v\rangle
    +\int_\Omega \nabla y(t)\cdot \nabla v\,dx
    +\int_\Omega q(t)\,y(t)\,v\,dx
    =\langle f(t),v\rangle .
\end{equation}
In our setting weak and mild solutions coincide.
By the standard theory of linear nonautonomous parabolic equations associated
with time-dependent bilinear forms, it follows that for every
$q\in L^2(0,T)$, $u\in L^2(\Omega)$, and
$f\in L^2(0,T;H^{-1}(\Omega))$, there exists a unique weak solution
$y\in W(0,T)$ of \eqref{eq:weak-parabolic}. Moreover, if
$\|q\|_{L^2(0,T)}\le M$, then there exists a constant
$C=C(M,T,\Omega)>0$ such that
\[
    \|y\|_{L^\infty(0,T;L^2(\Omega))}
    +\|y\|_{L^2(0,T;H_0^1(\Omega))}
    +\|\partial_t y\|_{L^2(0,T;H^{-1}(\Omega))}
    \le
    C\Bigl(\|u\|_{L^2(\Omega)}+\|f\|_{L^2(0,T;H^{-1}(\Omega))}\Bigr).
\]
In particular, for each $q\in L^2(0,T)$ and $u\in L^2(\Omega)$, the state equation
\eqref{system} has a unique weak solution $y\in W(0,T)$. We also consider the backward problem
\[
    \left\{
        \begin{aligned}
            -\partial_t\varphi-\Delta\varphi+q(t)\varphi &= g && \text{in }Q,\\
            \varphi &= 0 && \text{on }\Sigma,\\
            \varphi(T) &= \varphi_T && \text{in }\Omega,
        \end{aligned}
    \right.
\]
where $\varphi_T\in L^2(\Omega)$ and $g\in L^2(0,T;H^{-1}(\Omega))$.
By a \emph{weak solution} of this problem we mean a function $\varphi\in W(0,T)$
with terminal trace $\varphi(T)=\varphi_T$ such that for a.e.\ $t\in(0,T)$ and
every $v\in H_0^1(\Omega)$,
\[
    -\langle \partial_t \varphi(t),v\rangle
    +\int_\Omega \nabla \varphi(t)\cdot \nabla v\,dx
    +\int_\Omega q(t)\,\varphi(t)\,v\,dx
    =\langle g(t),v\rangle
\]
The backward equation also has a unique weak solution in $W(0,T)$,
together with the analogous a priori estimate. In particular, the adjoint equation in the definition of the set of constraints $\mathcal C$ has a
unique weak solution $\varphi\in W(0,T)$ for every right-hand side
$y-y_d\in L^2(Q)\subseteq L^2(0,T;H^{-1}(\Omega))$. %
\subsection{The soft-thresholding map and its Nemytskii operator}
Fix $\tau>0$. We denote by $\prox_{\tau|\cdot|}:\mathbb R\to\mathbb R$ the
scalar proximal mapping given by
\[
    \prox_{\tau|\cdot|}(w)
    :=\operatorname*{argmin}_{r\in\mathbb R}
    \left\{
        \frac12|r-w|^2+\tau|r|
    \right\}.
\]
Since $\Omega$ is bounded, the embedding $L^2(\Omega)\hookrightarrow L^1(\Omega)$
is continuous, and therefore the functional
\[
    L^2(\Omega)\ni u\longmapsto \tau\|u\|_{L^1(\Omega)}
    =\tau\int_\Omega |u(x)|\,dx\in\mathbb R
\]
is proper, convex, and continuous on $L^2(\Omega)$. Its proximal map $\prox_{\tau\|\cdot\|_{L^1(\Omega)}}:L^2(\Omega)\to L^2(\Omega)$ is thus well-defined and coincides with the Nemytskii operator induced by the scalar proximal map, i.e.,
\[
    \bigl(\prox_{\tau\|\cdot\|_{L^1(\Omega)}}(w)\bigr)(x)
    =
    \prox_{\tau|\cdot|}(w(x))
    \qquad\text{for a.e.\ }x\in\Omega .
\]
The scalar map $\prox_{\tau|\cdot|}$ is nondecreasing and piecewise affine with
slopes in $\{0,1\}$, hence it is Lipschitz, and consequently the corresponding Nemytskii $\prox_{\tau\|\cdot\|_{L^1(\Omega)}}$ is $1$-Lipschitz on $L^2(\Omega)$.
\subsection{Some variational geometry}
Throughout this subsection $X$ denotes a Banach space.  Let $C\subseteq X$ and $\theta\in X$. The \emph{Bouligand tangent cone} to $C$ at $\theta$ is defined by
\[
    T_C(\theta)
    :=
    \Bigl\{d\in X:\ \exists\,t_n\searrow 0,\ \exists\,d_n\to d\ \text{in }X
    \ \text{such that }\ \theta+t_n d_n\in C\ \ \forall n\in\mathbb N\Bigr\}.
\]
It follows from its definition that, $T_C(\theta)$ is closed. The \emph{Clarke tangent cone} to $C$
at $\theta$ is defined by
\[
    \hat T_C(\theta)
    :=
    \Bigl\{d\in X:\ \forall\,\theta_n\to \theta\ \text{in }C,\ \forall\,t_n\searrow 0,\
    \exists\,d_n\to d\ \text{in }X\ \text{with }\theta_n+t_n d_n\in C\Bigr\}.
\]
It follows from its definition that, $\hat T_C(\theta)$ is closed and convex. Notice that $\hat T_C(\theta)\subseteq T_C(\theta)$. The \emph{polar cone} of a nonempty set  $K\subseteq X$ is defined by
\[
    K^\circ:=\{x^*\in X^*:\ \langle x^*,d\rangle\le 0\ \ \forall d\in K\}.
\]
The \emph{normal cone} of $C$ at $\theta$ is given by $N_C(\theta):=T_C(\theta)^\circ$, likewise the \emph{Clarke normal cone} is the polar of the Clarke tangent
cone, i.e., $\hat N_C(\theta):=\hat T_C(\theta)^\circ$.

\section{Proofs}\label{sec:proofs}
This section contains the proofs of all statements in \cref{sec:statements}.

\subsection{Proof of Proposition \ref{topc}}
The proof is carried out in essentially two main steps. We first establish that the lower-level
solution mapping $L^2(0,T)\ni q \longmapsto u_q\in L^2(\Omega)$ is weak-to-strong sequentially continuous. This implies, in particular, that the graph of
this mapping, i.e., the lifted constraint set $\mathcal C$, is weakly sequentially closed. In the second step, we prove the remaining topological properties of
$\mathcal C$, namely path connectedness and nowhere density.

\subsubsection{On the continuity properties of the solution mapping}
As discussed above, we first prove a preliminary lemma showing that the
lower-level solution mapping is sequentially continuous from the weak topology of
$L^2(0,T)$ to the strong topology of $L^2(\Omega)$.
\begin{lemma}\label{conom}
    Let  $\mathcal S:L^2(0,T)\to L^2(\Omega)$ be given by $\mathcal S(q):=u_q$. Then $\mathcal S$ is weak-to-strong sequentially continuous.
\end{lemma}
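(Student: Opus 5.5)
Take $q_n\rightharpoonup q$ weakly in $L^2(0,T)$ and write $u_n:=u_{q_n}$, with $y_n,\varphi_n$ the associated state and adjoint from the definition of $\mathcal C$. The goal is $u_n\to u_q$ strongly in $L^2(\Omega)$. The plan is compactness, identification of the limit through the optimality system defining $\mathcal C$, and uniqueness of the lower-level minimizer, applied along subsequences.

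\emph{Step 1: uniform bounds.} Since $u=0$ is admissible, $\frac\lambda2\|u_n\|^2\le\frac12\|y_d\|_{L^2(Q)}^2$, so $(u_n)$ is bounded in $L^2(\Omega)$. Weakly convergent sequences are bounded, so $\|q_n\|_{L^2(0,T)}\le M$. The a priori estimate with constant $C(M,T,\Omega)$ then bounds $y_n$, and afterwards $\varphi_n$ (with right-hand side $y_n-y_d$), uniformly in $W(0,T)$.

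\emph{Step 2: passage to the limit.} By Aubin--Lions, $W(0,T)\hookrightarrow L^2(Q)$ is compact. After extracting a subsequence we get $y_n\rightharpoonup \tilde y$ and $\varphi_n\rightharpoonup\tilde\varphi$ weakly in $W(0,T)$ and strongly in $L^2(Q)$, and $u_n\rightharpoonup\tilde u$ weakly in $L^2(\Omega)$. The bilinear term is handled in weak form. For a test function $\zeta(t)w$ with $\zeta\in C_c^\infty(0,T)$ and $w\in H_0^1(\Omega)$, the term $\int_0^T q_n(t)\zeta(t)\langle y_n(t),w\rangle\,dt$ is a pairing of $q_n\rightharpoonup q$ in $L^2(0,T)$ with $t\mapsto\zeta(t)\langle y_n(t),w\rangle$. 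The latter converges strongly in $L^2(0,T)$ because $y_n\to\tilde y$ in $L^2(Q)$, so the product converges. The trace $y_n(0)=u_n$ passes to the weak limit through the integration-by-parts formula \eqref{eq:ibp-W}. Hence $\tilde y$ solves the state equation with coefficient $q$ and initial datum $\tilde u$, and likewise $\tilde\varphi$ solves the adjoint equation with right-hand side $\tilde y-y_d$.

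\emph{Step 3 (main obstacle): strong convergence of $\varphi_n(0)$.} We need $\varphi_n(0)\to\tilde\varphi(0)$ strongly in $L^2(\Omega)$. The nonlinearity $\prox$ does not commute with weak limits, and strong $L^2(Q)$ convergence alone gives no information on the trace. The plan is to use the mild formula for the backward equation:
\[
\varphi_n(0)=\int_0^T m_{q_n}(s,0)\,e^{s\Delta}\bigl(y_n(s)-y_d(s)\bigr)\,ds.
\]
First, $m_{q_n}(s,0)=\exp\bigl(-\int_0^s q_n\bigr)$ converges to $m_q(s,0)$ uniformly in $s$. Indeed, $\int_0^s q_n\to\int_0^s q$ pointwise by weak convergence, and these primitives are equicontinuous by Cauchy--Schwarz, so Arzelà--Ascoli upgrades this to uniform convergence. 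Second, $y_n\to\tilde y$ strongly in $L^2(Q)$. Third, $e^{s\Delta}$ is a contraction on $L^2(\Omega)$. Together these give $\varphi_n(0)\to\tilde\varphi(0)$ strongly in $L^2(\Omega)$, with the bound $\|\cdot\|\le\sqrt T\,\|\cdot\|_{L^2(Q)}$ making the convergence quantitative.

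The Nemytskii operator $\prox$ is 1-Lipschitz, so
\[
u_n=\prox_{(\mu/\lambda)|\cdot|}\bigl(-\tfrac1\lambda\varphi_n(0)\bigr)\to\prox_{(\mu/\lambda)|\cdot|}\bigl(-\tfrac1\lambda\tilde\varphi(0)\bigr)
\]
strongly in $L^2(\Omega)$. This limit must equal $\tilde u$. Therefore $(q,\tilde u)\in\mathcal C$, and since $\mathcal C$ is the graph of $q\mapsto u_q$, we get $\tilde u=u_q$.

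\emph{Step 4: conclusion.} The limit $u_q$ does not depend on the subsequence chosen. By the standard subsequence–subsequence argument, the whole sequence $u_n$ converges strongly to $u_q$, which proves that $\mathcal S$ is weak-to-strong sequentially continuous.
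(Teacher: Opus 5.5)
Your argument is correct, but it follows a different route from the paper's proof.

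\textbf{The paper's route.} The paper's proof is purely variational and never uses the adjoint or the prox relation:
\begin{itemize}
\item it bounds $u_n$ by comparison with $u=0$ and extracts a weak limit $\bar u$;
\item it \emph{asserts} that $y_{q_n,u_n}\to y_{q,\bar u}$ in $L^2(Q)$, and combines this with weak lower semicontinuity of the regularizers to get $\mathcal J_q(\bar u)\le\liminf_n\mathcal J_{q_n}(u_n)\le\limsup_n\mathcal J_{q_n}(u_n)\le\mathcal J_q(u_q)$;
\item it identifies $\bar u=u_q$ by uniqueness of the minimizer;
\item it upgrades weak to strong convergence through the quadratic growth bound $\mathcal J_{q_n}(v)\ge\mathcal J_{q_n}(u_n)+\tfrac\lambda2\|v-u_n\|^2$ evaluated at $v=u_q$.
\end{itemize}

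\textbf{Your route.} You instead pass to the limit in the optimality system that defines $\mathcal C$. The decisive step is the strong convergence of the adjoint traces $\varphi_n(0)$. You obtain it from the integrating-factor Duhamel formula together with the uniform convergence of $m_{q_n}(\cdot,0)$. Strong convergence of $u_n$ then follows at once from the $1$-Lipschitz prox, with no strong-convexity argument.

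\textbf{What each approach buys.} The paper's argument does not depend on the structure of the lower-level problem. It works for any $\lambda$-strongly convex lower-level functional whose data term behaves continuously under joint weak convergence. Yours relies on the pointwise prox characterization and on the coefficient being scalar in time, which is what makes the integrating factor available. In exchange, your Step~2 actually proves the compactness and weak--strong passage to the limit in the bilinear term $q_n y_n$. That is exactly what is needed to justify the paper's unproved claim $y_{q_n,u_n}\to y_{q,\bar u}$ in $L^2(Q)$. Your proof also yields, as a by-product, the strong convergence of the adjoint traces $\varphi_n(0)$. This is the kind of information used later, for instance in the proof of Corollary~\ref{Ctc}.

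\textbf{Minor remark.} In Step~3 you do not need the Aubin--Lions limit $\tilde\varphi$ of the adjoints. The mild formula with coefficient $q$ and right-hand side $\tilde y-y_d$ directly defines both the limit of $\varphi_n(0)$ and the adjoint that certifies $(q,\tilde u)\in\mathcal C$.
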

\begin{proof}
    For $q\in L^2(0,T)$ and $v\in L^2(\Omega)$, let $y_{q,v}\in W(0,T)$ denote the
    unique weak solution of \eqref{system}. Given $q\in L^2(0,T)$, consider the
    lower-level functional $\mathcal J_q:L^2(\Omega)\to\mathbb R$ given by
    \[
        \mathcal J_q(v)
        :=\frac12\int_0^T\|y_{q,v}(t)-y_d(t)\|_{L^2(\Omega)}^2\,dt
        +\frac\lambda2\|v\|_{L^2(\Omega)}^2
        +\mu\|v\|_{L^1(\Omega)}.
    \]
    Observe that each $\mathcal J_q$ is $\lambda$--strongly convex, and has $u_q$ as
    its unique minimizer.

    Let $(q_n)_{n\in\mathbb N}\subseteq L^2(0,T)$ and $q\in L^2(0,T)$ be such that
    $q_n\rightharpoonup q$ in $L^2(0,T)$. Set $u_n:=u_{q_n}$ and $u:=u_q$. We prove below
    that $u_n\to u$ in $L^2(\Omega)$.

    By optimality of each $u_n$, $\frac\lambda2\|u_n\|_{L^2(\Omega)}^2
    \le \mathcal J_{q_n}(u_n)
    \le \mathcal J_{q_n}(0)
    =\frac12\int_0^T\|y_d(t)\|_{L^2(\Omega)}^2\,dt$,
    so $(u_n)_{n\in\mathbb N}$ is bounded in $L^2(\Omega)$. We can extract a subsequence (not relabeled) such that
    $u_n\rightharpoonup \bar u$ in $L^2(\Omega)$ for some $\bar u\in L^2(\Omega)$.
    Since $q_n\rightharpoonup q$ in $L^2(0,T)$ and $u_n\rightharpoonup \bar u$ in
    $L^2(\Omega)$, it must be that  $y_{q_n,u_n}\to y_{q,\bar u}$ strongly in  $L^2(Q)$. In particular, $\frac12\int_0^T\|y_{q_n,u_n}(t)-y_d(t)\|_{L^2(\Omega)}^2\,dt
    \to
    \frac12\int_0^T\|y_{q,\bar u}(t)-y_d(t)\|_{L^2(\Omega)}^2\,dt$.
    Additionally, $v\mapsto \|v\|_{L^2(\Omega)}^2$ and
    $v\mapsto \|v\|_{L^1(\Omega)}$ are convex and continuous on $L^2(\Omega)$, hence also weakly lower
    semicontinuous. Combining these facts, we get  $\mathcal J_q(\bar u)\le \liminf_{n\to\infty}\mathcal J_{q_n}(u_n)$. Since $y_{q_n,u}\to y_{q,u}$ strongly in $L^2(Q)$,
    $\mathcal J_{q_n}(u)\to \mathcal J_q(u)$.
    By optimality of $u_n$ we have $\mathcal J_{q_n}(u_n)\le \mathcal J_{q_n}(u)$ for
    all $n\in\mathbb N$, hence taking limes superior gives $\limsup_{n\to\infty}\mathcal J_{q_n}(u_n)\le \mathcal J_q(u)$.
    Combining this with the previous limes inferior estimate yields
    $\mathcal J_q(\bar u)\le \mathcal J_q(u)$. Since $\mathcal J_q$ has $u=u_q$ as unique
    minimizer, it follows that $\bar u=u$. Thus every weakly convergent
    subsequence of $(u_n)_{n\in\mathbb N}$ converges to $u$, and therefore $u_n\rightharpoonup u$ weakly in $L^2(\Omega)$.

    Let us now upgrade the convergence of $(u_n)_{n\in\mathbb N}$ to $u$ from weak to strong.
    For each $n\in\mathbb N$, $\mathcal J_{q_n}$ is $\lambda$--strongly convex on $L^2(\Omega)$,
    and hence satisfies the global error bound
    \[
        \mathcal J_{q_n}(v)\ge \mathcal J_{q_n}(u_n)+\frac\lambda2\|v-u_n\|_{L^2(\Omega)}^2\quad \forall v\in L^2(\Omega).
    \]
    Moreover; from the fact that $\mathcal J_{q_n}(u)\to \mathcal J_q(u)$ and
    $\mathcal J_{q_n}(u_n)\to \mathcal J_q(u)$, we infer that
    \[
        \|u-u_n\|_{L^2(\Omega)}^2\le \frac2\lambda\Big(\mathcal J_{q_n}(u) - \mathcal J_{q_n}(u_n)\Big)\longrightarrow0\quad\text{as}\quad n\longrightarrow+\infty.
    \]
    Since every sequence $q_n\rightharpoonup q$ in $L^2(0,T)$ satisfies
    $\mathcal S(q_n)=u_{q_n}\to u_q=\mathcal S(q)$ in $L^2(\Omega)$, the mapping
    $\mathcal S:L^2(0,T)\to L^2(\Omega)$ is weak-to-strong sequentially continuous.
\end{proof}

\subsubsection{Weak sequential closedness}
Note that $\mathcal C=\operatorname{gph}\mathcal S$, where $\mathcal S:L^2(0,T)\to L^2(\Omega)$ is the solution mapping. Let $(q_n,u_n)\subseteq\mathcal C$ satisfy $(q_n,u_n)\rightharpoonup(q,u)$ in $L^2(0,T)\times L^2(\Omega)$.  Since $u_n=\mathcal S(q_n)$ and $\mathcal S$ is weak-to-strong
sequentially continuous, we have
\[
    u_n=\mathcal S(q_n)\to \mathcal S(q)
    \,\,\,\text{strongly in }L^2(\Omega).
\]
In particular, $u_n\rightharpoonup \mathcal S(q)$ weakly in $L^2(\Omega)$.
By uniqueness of weak limits, $u=\mathcal S(q)$. Hence $\mathcal C=\operatorname{gph}\mathcal S$ is weakly sequentially closed.

\subsubsection{Path connectedness}
Note that $\mathcal C$ is the graph of the solution mapping
$\mathcal S:L^2(0,T)\to L^2(\Omega)$, that is, $\mathcal C=\{(q,\mathcal S(q)):\ q\in L^2(0,T)\}$.
Let $(q_0,u_0),(q_1,u_1)\in\mathcal C$ be arbitrary. Then $u_0=\mathcal S(q_0)$ and
$u_1=\mathcal S(q_1)$. Define a path $\gamma:[0,1]\to L^2(0,T)\times L^2(\Omega)$
by
\[
    \gamma(t):=\bigl(q(t),\mathcal S(q(t))\bigr),
    \qquad
    q(t):=(1-t)q_0+tq_1,
    \quad t\in[0,1].
\]
Since norm convergence implies weak convergence, \cref{conom} shows that $\mathcal S$ is norm-to-norm sequentially continuous.
As both spaces are metric, $\mathcal S$ is norm-to-norm continuous; hence
$\gamma$ is continuous. Moreover, $\gamma(t)\in\mathcal C$ for every
$t\in[0,1]$, and $\gamma(0)=(q_0,u_0)$, $\gamma(1)=(q_1,u_1)$. Thus $\mathcal C$
is path connected.

\subsubsection{Nowhere denseness}
We show first that $\mathcal C$ has empty interior in $L^2(0,T)\times L^2(\Omega)$.
Fix $(q,u)\in\mathcal C$. Then $u=u_q=\mathcal S(q)$, and by uniqueness of the
lower-level solution we have
\[
    \{\,v\in L^2(\Omega):\ (q,v)\in\mathcal C\,\}=\{u_q\}.
\]
Let $r>0$ be arbitrary. Choose $w\in L^2(\Omega)\setminus \{0\}$ and set
$\varepsilon:=\frac{r}{2\|w\|_{L^2(\Omega)}}$. Then
\[
    \|(q,u_q+\varepsilon w)-(q,u_q)\|_{L^2(0,T)\times L^2(\Omega)}
    =\varepsilon\|w\|_{L^2(\Omega)}=\frac r2<r,
\]
so $(q,u_q+\varepsilon w)\in B_r(q,u_q)$. However, $(q,u_q+\varepsilon w)\notin\mathcal C$; otherwise $u_q+\varepsilon w=u_q$, yielding $w=0$, a contradiction. Therefore no
ball around $(q,u_q)$ is contained in $\mathcal C$, so $\operatorname{int}\mathcal C=\varnothing$.
Since $\mathcal C$ is weakly sequentially closed, it is in particular norm-closed; it follows then that
$\overline{\mathcal C}=\mathcal C$ and hence $\operatorname{int}\overline{\mathcal C}=\varnothing$.
Thus $\mathcal C$ is nowhere dense.

\subsection{Proof of Proposition \ref{nonconvexity}}
We prove the two implications separately. The first one is an easy consequence of the structure of the
lower-level optimality system. For the converse, we assume that the lifted
constraint set is convex and show that this forces the tracking datum to vanish
almost everywhere. The argument uses carefully chosen coefficients together
with uniqueness of the associated lower-level solutions.
\subsubsection{\texorpdfstring{Implication $(i)\Rightarrow (ii)$}{Implication (i) => (ii)}}
Assume that $\{y_d\neq 0\}$ has zero Lebesgue measure. Then $y_d=0$ in
$L^2(Q)$. Fix $q\in L^2(0,T)$. The objective functional of the lower level problem then becomes
\[
    \mathcal J_q(u):
    =\frac12\int_0^T\|y_{q,u}(t)\|_{L^2(\Omega)}^2\,dt
    +\frac\lambda2\|u\|_{L^2(\Omega)}^2
    +\mu\|u\|_{L^1(\Omega)}\;\ge\;0
    \quad\text{for all }u\in L^2(\Omega).
\]
Since $y_{q,0}\equiv 0$, we have $\mathcal J_q(0)=0$, hence $u=0$ is a global
minimizer. By $\lambda$--strong convexity of $\mathcal J_q$, this minimizer is
unique, so $u_q=0$.
Consequently, for every $q\in L^2(0,T)$ we have $(q,0)\in\mathcal C$, and for
every $(q,u)\in\mathcal C$ we have $u=u_q=0$. Therefore $\mathcal C = L^2(0,T)\times\{0\}$, which is an affine (hence convex) subset of $L^2(0,T)\times L^2(\Omega)$.

\subsubsection{\texorpdfstring{Implication $(ii)\Rightarrow (i)$}{Implication (ii) => (i)}}
Suppose, towards a contradiction, that \(\mathcal C\) is convex while
\(y_d\not\equiv0\) in \(L^2(Q)\). The proof is divided into three steps.

\paragraph{Step 1 (convexity of the set implies constancy of some solutions).}
For each constant $c\in\mathbb R$, write ${q}_c\equiv c$, and let $u_c$ be the unique $u$ satisfying $({q}_c,u)\in\mathcal C$.
Fix $c_0,c_1\in\mathbb R$ and $t\in[0,1]$. Since $({q}_{c_0},u_{c_0}),({q}_{c_1},u_{c_1})\in\mathcal C$ and the set $\mathcal C$ is assumed to be  convex, we have
\[
    \bigl((1-t){q}_{c_0}+t{q}_{c_1},\ (1-t)u_{c_0}+tu_{c_1}\bigr)\in\mathcal C.
\]
By definition of $\mathcal C$, $u_{(1-t)c_0+tc_1}=(1-t)u_{c_0}+tu_{c_1}$.
Therefore the graph $\{(c,u_c):c\in\mathbb R\}\subseteq\mathbb R\times L^2(\Omega)$ is convex, and the mapping $c\longmapsto u_c$ is affine.
\smallbreak\noindent
The objective functional of the lower level problem
\[
    \mathcal J_c(u)=\frac12\int_0^T \|y^c_u(t)-y_d(t)\|_{L^2(\Omega)}^2\,dt+\frac\lambda2\|u\|_{L^2(\Omega)}^2+\mu\|u\|_{L^1(\Omega)}
\]
satisfies $\mathcal J_c(u_c)\le \mathcal J_c(0)=\tfrac12\int_0^T\|y_d(t)\|_{L^2(\Omega)}^2\,dt$, so
\[
    \|u_c\|_{L^2(\Omega)}^2\le\lambda^{-1}\int_0^T\|y_d(t)\|_{L^2(\Omega)}^2\,dt
    \quad\forall\,c\in\mathbb R.
\]
We see that $\{u_c\}_{c\in\mathbb R}\subseteq L^2(\Omega)$ is bounded.
The affine mapping $c\longmapsto u_c$ has the form $u_c=u_0+c\,w$ for some $u_0,w\in L^2(\Omega)$. If $w\neq 0$, then $\|u_c\|_{L^2(\Omega)}\to+\infty$ as $|c|\to+\infty$, contradicting boundedness;  whence $w=0$. We conclude there exists $\bar u\in L^2(\Omega)$ such that $u_c\equiv \bar u$ for all $c\in\mathbb R$.

\paragraph{Step 2 (constant solutions must be zero).}
The solution of the state equation
\[
    \begin{cases}
        \partial_t y-\Delta y+c\,y=0 & \text{in }Q\\
        y=0 & \text{on }\Sigma\\
        y(0)=\bar u & \text{in }\Omega
    \end{cases}
\]
associated with $u_c$ can be written as $y_{c,\bar u}(t)=e^{-ct}e^{t\Delta}\bar u$, where $(e^{t\Delta})_{t\in[0,+\infty)}$ denotes the heat semigroup.
The solution of the corresponding adjoint equation
\[
    \begin{cases}
        -\partial_t\varphi-\Delta\varphi+c\,\varphi=y_{c,\bar u}-y_d & \text{in }Q\\
        \varphi=0 & \text{on }\Sigma\\
        \varphi(T)=0 & \text{in }\Omega
    \end{cases}
\]
can be written, using the variation of parameters formula, as
\[
    \varphi_c(0)=\int_0^T e^{-cs}e^{s\Delta}\bigl(y_{c,\bar u}(s)-y_d(s)\bigr)\,ds
    =\int_0^T e^{-2cs}e^{2s\Delta}\bar u\,ds-\int_0^T e^{-cs}e^{s\Delta}y_d(s)\,ds.
\]
The heat semigroup is $L^2$–contractive, hence
\[
    \Big\|\int_0^T e^{-2cs}e^{2s\Delta}\bar u\,ds\Big\|_{L^2}
    \le \int_0^T e^{-2cs}\,\|e^{2s\Delta}\bar u\|_{L^2}\,ds
    \le \int_0^T e^{-2cs}\,ds\,\|\bar u\|_{L^2}
    =\frac{1-e^{-2cT}}{2c}\,\|\bar u\|_{L^2}\xrightarrow[c\to\infty]{}0,
\]
and, by Cauchy–Schwarz in time,
\[
    \Big\|\int_0^T e^{-cs}e^{s\Delta}y_d(s)\,ds\Big\|_{L^2}
    \le \int_0^T e^{-cs}\,\|y_d(s)\|_{L^2}\,ds
    \le \|y_d\|_{L^2(0,T;L^2)}\Big(\int_0^T e^{-2cs}\,ds\Big)^{1/2}
    \xrightarrow[c\to\infty]{}0.
\]
Thus $\varphi_c(0)\to0$ in $L^2(\Omega)$ as $c\to+\infty$.
Since $\Omega$ is bounded, the functional $\|\cdot\|_{L^1}:L^2(\Omega)\to \mathbb R$ is convex and continuous,
so the proximal map $\operatorname{prox}_{(\mu/\lambda)\|\cdot\|_{L^1}}:L^2(\Omega)\to L^2(\Omega)$ is firmly nonexpansive, and in particular, $1$–Lipschitz. Then the feasibility conditions give
\[
    \bigl\|\,\bar u-\operatorname{prox}_{(\mu/\lambda)\|\cdot\|_{L^1}}(0)\bigr\|_{L^2} = \bigl\|\,\operatorname{prox}_{(\mu/\lambda)\|\cdot\|_{L^1}}\big(-\lambda^{-1}\varphi_c(0)\big)-\operatorname{prox}_{(\mu/\lambda)\|\cdot\|_{L^1}}(0)\bigr\|_{L^2}
    \le \tfrac{1}{\lambda}\,\|\varphi_c(0)\|_{L^2}\xrightarrow[c\to+\infty]{}0,
\]
hence $\bar u=\operatorname{prox}_{(\mu/\lambda)\|\cdot\|_{L^1}}(0)=0$.
Consequently, by Step~1, $u_c\equiv0$.

\paragraph{Step 3 (a contradiction to feasibility).}
With $\bar u=0$, the feasibility condition becomes
\[
    \operatorname{prox}_{(\mu/\lambda)\|\cdot\|_{L^1}}\!\bigl(-\tfrac1\lambda \varphi_c(0)\bigr)=0.
\]
so $|\varphi_c(0)|\le \mu$ a.e. in $\Omega$ for all $c\in\mathbb R$. We conclude that  $\|\varphi_c(0)\|_{L^2}\le \mu\,|\Omega|^{1/2}$ for all $c\in\mathbb R$.

Expand $y_d(s)=\sum_{k\in\mathbb N} y_k(s)e_k$ in the Dirichlet eigenbasis
$\{-\Delta e_k=\lambda_k e_k\}$.
Using $e^{s\Delta}e_k=e^{-\lambda_k s}e_k$, we obtain
\[
    \varphi_c(0)
    =-\int_0^T e^{-cs}e^{s\Delta}y_d(s)\,ds
    =-\sum_{k\ge1}\Big(\int_0^T e^{(-c-\lambda_k)s}y_k(s)\,ds\Big)e_k
    \quad\forall c\in\mathbb R.
\]
Since $y_d\not\equiv0$ in $L^2(0,T;L^2)$ by assumption, we can choose $k_0\in\mathbb N$ such that
$y_{k_0}\not\equiv0$ in $L^2(0,T)$.
Define $g(s)=e^{-\lambda_{k_0}s}y_{k_0}(s)\in L^1(0,T)\setminus\{0\}$.
An elementary application of Varadhan’s integral lemma \cite[Theorem 4.3.1]{DZ_1998}, together with the Laplace lower and upper bounds \cite[Lemmas 4.3.4 and 4.3.6]{DZ_1998}, yields
\[
    \limsup_{M\to\infty}\Big|\int_0^T e^{Ms}g(s)\,ds\Big|=+\infty.
\]
This gives a sequence $c_n\to -\infty$ such that
\[
    \Big|\int_0^T e^{(-c_n-\lambda_{k_0})s}y_{k_0}(s)\,ds\Big|
    \xrightarrow[n\to+\infty]{}+\infty.
\]
Because $(e_k)_{k\in\mathbb N}$ is an orthonormal basis of $L^2(\Omega)$,
\[
    \|\varphi_{c_n}(0)\|_{L^2}^2
    =\sum_{k\ge1}\Big|\int_0^T e^{(-c_n-\lambda_k)s}y_k(s)\,ds\Big|^2
    \ \ge\
    \Big|\int_0^T e^{(-c_n-\lambda_{k_0})s}y_{k_0}(s)\,ds\Big|^2
    \to +\infty,
\]
which contradicts the uniform bound $\sup_{c\in\mathbb R}\|\varphi_c(0)\|_{L^2}\le \mu\,|\Omega|^{1/2}$.
Therefore $\mathcal C$ cannot be convex.

\subsection{Proof of Proposition \ref{p1}}
We prove the result by establishing the two inclusions. We first record some
preliminary lemmas on the linearized state--adjoint system and the directional
behavior of the soft-thresholding map. We then pass from Bouligand tangent
directions to the stated linearized system and, conversely, show that every
direction satisfying this system is realized by admissible perturbations in the
lifted constraint set.

\subsubsection{Some preliminary lemmas}
We begin with a simple calculation, whose proof is straightforward and which will be used throughout.
\begin{lemma}\label{al0}
    Set $\tau:=\mu/\lambda$. Let $f:\mathbb R\to\mathbb R$ be given by
    \[
        f(s):=\operatorname{prox}_{\tau|\cdot|}(s)
        =\operatorname{sign}(s)\,\max\{|s|-\tau,0\}.
    \]
    Its directional derivative at $r\in\mathbb R$ in direction $d\in\mathbb R$ is given by
    \begin{align}\label{ddb}
        f'(r;d)=
        \begin{cases}
            0 & \text{if }|r|<\tau\\
            \max\{d,0\} &\text{if } r=\tau\\
            \min\{d,0\} &\text{if } r=-\tau\\
            d &\text{if } |r|>\tau.
        \end{cases}
    \end{align}
\end{lemma}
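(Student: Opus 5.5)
The plan is to compute the one-sided directional derivative
\[
    f'(r;d)=\lim_{t\searrow0}\frac{f(r+td)-f(r)}{t}
\]
directly from the explicit formula $f(s)=\operatorname{sign}(s)\max\{|s|-\tau,0\}$. The key fact is that $f$ is piecewise affine: $f(s)=s-\tau$ for $s\ge\tau$, $f(s)=0$ for $|s|\le\tau$, and $f(s)=s+\tau$ for $s\le-\tau$. We fix $r$ and $d$ and split into the four regimes listed in \eqref{ddb}. In each regime we determine the affine piece on which $r+td$ lies for all sufficiently small $t>0$.

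In the open regimes the argument is short. If $|r|<\tau$, then $|r+td|<\tau$ for $t$ small, so $f(r+td)=f(r)=0$ and the difference quotient vanishes. If $r>\tau$, then $r+td>\tau$ for $t$ small, so $f(r+td)-f(r)=td$. The case $r<-\tau$ is symmetric and gives $td$ as well. In both of the latter cases the limit is $d$.

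At a kink $r=\tau$ we have $f(r)=0$, and the sign of $d$ decides the piece. If $d>0$, then $r+td>\tau$ and $f(r+td)=td$. If $d\le0$, then $r+td\in[-\tau,\tau]$ for small $t$ (using $\tau>0$), so $f(r+td)=0$. Hence the quotient equals $\max\{d,0\}$ exactly, for all small $t$.

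At $r=-\tau$ the roles reverse. For $d<0$ we get $f(r+td)=r+td+\tau=td$, and for $d\ge0$ we get $0$, so the quotient is $\min\{d,0\}$.

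No step is a genuine obstacle; the only care needed is at the kinks. There one must use $\tau>0$, which guarantees that the interval $[-\tau,\tau]$ has positive length, so a small perturbation from one endpoint cannot reach the other threshold. As a consistency check, note that $f$ is $1$-Lipschitz and piecewise affine, so it is directionally differentiable everywhere, and $f'(r;\cdot)$ is positively homogeneous, in agreement with \eqref{ddb}.
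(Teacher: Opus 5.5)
Your proof is correct, and the direct case-by-case evaluation of the difference quotient on the affine pieces of $f$ is essentially what the paper has in mind; it omits the proof as a ``straightforward'' calculation. You also handle the kinks correctly, using $\tau>0$ so that $r+td$ stays in $[-\tau,\tau]$ for small $t>0$ when moving inward from $r=\pm\tau$.
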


We now record a couple properties associated to equations \eqref{Btclin}.
\begin{lemma}\label{al1}
    Given $q\in L^2(0,T)$, consider the linear operator  $A_q: L^2(\Omega)\to L^2(\Omega)$ given by $A_qv:=\psi_{q}(0)$, where
    \begin{equation}\label{Btclinn}
        \left\{
            \begin{aligned}
                \partial_t z_q-\Delta z_q+q(t)\,z_q&=0 &&\text{in }Q,\\
                z_q&=0 &&\text{on }\Sigma,\\
                z_q(0)&=v &&\text{in }\Omega,
            \end{aligned}
        \right.
        \qquad
        \left\{
            \begin{aligned}
                -\partial_t \psi_q-\Delta \psi_q+q(t)\,\psi_q&=z_q &&\text{in }Q,\\
                \psi_q&=0 &&\text{on }\Sigma,\\
                \psi_q(T)&=0 &&\text{in }\Omega .
            \end{aligned}
        \right.
    \end{equation}
    Then, the following statements hold.
    \begin{enumerate}[label=(\roman*),nosep]
        \item\label{al1:i} For each $q\in L^2(0,T)$, the linear operator $A_q:L^2(\Omega)\to L^2(\Omega)$ is bounded, self-adjoint and satisfies
            \[
                \langle A_qv,v\rangle_{L^2(\Omega)} \ge 0\quad \forall v\in L^2(\Omega).
            \]

        \item\label{al1:ii} For any $M>0$ there exists $C>0$ such that, for  $q_1,q_2\in \mathbb B_{L^2(0,T)}(0,M)$, there holds
            \[
                \|A_{q_1}v-A_{q_2}v\|_{L^2(\Omega)}\le C\|q_1-q_2\|_{L^2(0,T)} \|v\|_{L^2(\Omega)}\quad\forall v\in L^2(\Omega).
            \]
    \end{enumerate}
\end{lemma}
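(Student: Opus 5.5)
For \ref{al1:i}, I will read $A_q$ as the composition $A_q=S_q^*S_q$, where $S_q:L^2(\Omega)\to L^2(Q)$, $S_qv:=z_q$, is the linear forward solution map of the first system in \eqref{Btclinn}. Boundedness of $S_q$ follows from the a priori estimate for weak solutions, since $\|z_q\|_{L^2(Q)}\le C\|v\|_{L^2(\Omega)}$. I will then identify the adjoint: the map $g\mapsto\psi(0)$ for the backward problem with right-hand side $g$ is exactly $S_q^*$. To check this, take $v,w\in L^2(\Omega)$ and let $z_w=S_qw$. Testing the backward equation for $\psi_q$ with $z_w$, testing the forward equation for $z_w$ with $\psi_q$, subtracting, and applying the integration by parts formula \eqref{eq:ibp-W} gives
\[
\int_0^T\!\!\int_\Omega z_q\,z_w\,dx\,dt=\langle \psi_q(0),z_w(0)\rangle-\langle\psi_q(T),z_w(T)\rangle=\langle A_qv,w\rangle .
\]
Here the gradient terms and the zero-order terms $q\,z\,\psi$ cancel, and $\psi_q(T)=0$. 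The left-hand side is symmetric in $v$ and $w$, so $A_q$ is self-adjoint. Setting $w=v$ gives $\langle A_qv,v\rangle=\|z_q\|_{L^2(Q)}^2\ge0$. Boundedness of $A_q$ follows from the a priori estimate applied twice.

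For \ref{al1:ii}, I will fix $q_1,q_2$ with norms at most $M$ and set $\delta z:=z_{q_1}-z_{q_2}$ and $\delta\psi:=\psi_{q_1}-\psi_{q_2}$. These differences solve
\[
\partial_t\delta z-\Delta\delta z+q_1\delta z=-(q_1-q_2)z_{q_2},\qquad \delta z(0)=0,
\]
\[
-\partial_t\delta\psi-\Delta\delta\psi+q_1\delta\psi=\delta z-(q_1-q_2)\psi_{q_2},\qquad \delta\psi(T)=0.
\]
I will estimate the right-hand sides in $L^2(Q)\subseteq L^2(0,T;H^{-1}(\Omega))$ using the product bound $\|h\,y\|_{L^2(Q)}\le\|h\|_{L^2(0,T)}\|y\|_{L^\infty(0,T;L^2)}$ from the preliminaries. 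The uniform a priori estimates, whose constants depend only on $M$, give
\[
\|z_{q_2}\|_{L^\infty(L^2)}\le C\|v\|,\qquad \|\psi_{q_2}\|_{L^\infty(L^2)}\le C\|v\|.
\]
This yields first $\|\delta z\|_{W(0,T)}\le C\|q_1-q_2\|\,\|v\|$, and then the same type of bound for $\|\delta\psi\|_{W(0,T)}$. The embedding $W(0,T)\hookrightarrow C([0,T];L^2(\Omega))$ then controls $\|\delta\psi(0)\|_{L^2(\Omega)}=\|A_{q_1}v-A_{q_2}v\|_{L^2(\Omega)}$.

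The only point needing care is that every constant must be uniform over the ball of radius $M$. This is guaranteed by the stated dependence $C=C(M,T,\Omega)$ in the a priori estimate, applied to both the forward and the backward problems. No step is a genuine obstacle; the computation in \ref{al1:i} is where the structure lies.
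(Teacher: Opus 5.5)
Your proposal is correct and takes the route the paper indicates: the paper's proof says only that the lemma follows from standard parabolic estimates and the integration by parts formula \eqref{eq:ibp-W}. Your duality identity $\langle A_qv,w\rangle_{L^2(\Omega)}=\int_Q z_q\,z_w\,dx\,dt$ gives boundedness, symmetry and nonnegativity in (i), and your difference systems with a priori estimates uniform over the ball of radius $M$ give the Lipschitz bound in (ii), so you have simply supplied the details the paper omits.
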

\begin{proof}
    This follows from standard arguments using the usual parabolic estimates and the integration by parts formula.
\end{proof}

Finally, we record a lemma on the uniqueness of solutions of system \eqref{Btclin} in a special case.

\begin{lemma}\label{lem:unique-v} Let $(q,u)\in\mathcal C$, and denote by $(y,\varphi)\in W(0,T)\times W(0,T)$
    its associated state--adjoint pair.
    Let $h\in L^2(0,T)$ and
    $(v_1,z_1,\psi_1),(v_2,z_2,\psi_2)\in L^2(\Omega)\times W(0,T)\times W(0,T)$.  Suppose that each triple $(v_i,z_i,\psi_i)$ satisfies
    \eqref{Btclin} with the same $h$.
    Then $v_1=v_2$ in $L^2(\Omega)$.
\end{lemma}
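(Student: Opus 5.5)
Since $h$ is the same for both triples, I will subtract the two systems and work with the differences $\delta v:=v_1-v_2$, $\delta z:=z_1-z_2$, $\delta\psi:=\psi_1-\psi_2$. The inhomogeneities $h\,y$ and $h\,\varphi$ cancel, so $(\delta z,\delta\psi)$ solves exactly the homogeneous system \eqref{Btclinn} with initial datum $\delta v$. Hence $\delta\psi(0)=A_q\,\delta v$, with $A_q$ the operator of \cref{al1}. The goal is then to show $\delta v=0$ from the pointwise selection rule in \eqref{Btclin}.

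Write $\psi_i(0)=:p_i$. On each region of $\Omega$, \eqref{Btclin} gives $v_i=-\frac1\lambda g_x(p_i)$, where $g_x$ is one of the maps $0$, $\min\{\cdot,0\}$, $\max\{\cdot,0\}$ or the identity, depending on the region containing $x$. Each of these scalar maps is nondecreasing and $1$-Lipschitz, so
\[
    \bigl(g_x(a)-g_x(b)\bigr)(a-b)\ \ge\ |g_x(a)-g_x(b)|^2 .
\]
This is firm nonexpansiveness; it is checked case by case, or by noting that each map is the projection onto a closed interval: $\{0\}$, $(-\infty,0]$, $[0,\infty)$ or $\mathbb R$. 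Consequently, a.e.\ in $\Omega$,
\[
    -\lambda\,\delta v\,(p_1-p_2)\ \ge\ \lambda^2|\delta v|^2 .
\]
Integrating over $\Omega$ and using $p_1-p_2=A_q\,\delta v$ yields
\[
    \lambda^2\|\delta v\|_{L^2(\Omega)}^2\ \le\ -\lambda\,\langle A_q\,\delta v,\delta v\rangle_{L^2(\Omega)}\ \le\ 0 ,
\]
where the last inequality is the nonnegativity of $A_q$ from \cref{al1}\ref{al1:i}. Since $\lambda>0$, this forces $\delta v=0$, i.e.\ $v_1=v_2$.

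The only points needing care are the verification that the four branches in \eqref{Btclin} all have the monotone, firmly nonexpansive form, and that the measurable partition of $\Omega$ is the same for both triples. The latter holds because the partition depends only on $\varphi$, which is fixed by $(q,u)$. The nonnegativity $\langle A_q v,v\rangle=\int_Q|z_q|^2\ge0$ is already provided by \cref{al1}, so I expect no real obstacle.
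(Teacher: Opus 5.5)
Your proof is correct and follows essentially the paper's route: subtract the two systems, then play the nonnegativity $\langle A_q\,\delta v,\delta v\rangle_{L^2(\Omega)}=\int_0^T\|\delta z(t)\|_{L^2(\Omega)}^2\,dt\ge 0$ against a monotonicity property of the pointwise selection rule in \eqref{Btclin}. The only difference is the last step. You use firm nonexpansiveness to get $\lambda^2\|\delta v\|_{L^2(\Omega)}^2\le 0$ directly. The paper uses plain monotonicity to obtain $\int_0^T\|\sigma(t)\|_{L^2(\Omega)}^2\,dt=0$, and then reads off $w=\sigma(0)=0$.
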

\begin{proof}
    Set $w:=v_1-v_2$, $\sigma:=z_1-z_2$, and $\chi:=\psi_1-\psi_2$.
    Subtracting the two instances of \eqref{Btclin}  yields
    \[
        \left\{
            \begin{aligned}
                \partial_t \sigma-\Delta \sigma+q(t)\,\sigma&=0 &&\text{in }Q,\\
                \sigma&=0 &&\text{on }\Sigma,\\
                \sigma(0)&=w &&\text{in }\Omega,
            \end{aligned}
        \right.
        \qquad
        \left\{
            \begin{aligned}
                -\partial_t \chi-\Delta \chi+q(t)\,\chi&=\sigma &&\text{in }Q,\\
                \chi&=0 &&\text{on }\Sigma,\\
                \chi(T)&=0 &&\text{in }\Omega .
            \end{aligned}
        \right.
    \]
    Testing the $\sigma$--equation by $\chi$ and the $\chi$--equation
    by $\sigma$, subtracting the resulting identities, and using
    \eqref{eq:ibp-W} (with $\chi(T)=0$ and $\sigma(0)=w$) gives
    \begin{equation}\label{eq:unique-v:id}
        \int_{\Omega}\chi(0)w\, dx
        =\int_0^T \|\sigma(t)\|_{L^2(\Omega)}^2\,dt
        \;\ge\;0.
    \end{equation}
    Denote by
    $f:\mathbb R\to\mathbb R$ the function in \cref{al0}.
    Its directional derivative at $r\in\mathbb R$ in direction $d\in\mathbb R$ is given in \eqref{ddb}.
    We write
    \[
        r:=-\frac{1}{\lambda}\,\varphi(0)\quad\text{and}\quad d_i:=-\frac{1}{\lambda}\,\psi_i(0).
    \]
    Then $v_i=f'(r;d_i)$. For each
    fixed $r\in\mathbb R$, the map $d\longmapsto f'(r;d)$ is monotone nondecreasing,
    hence
    \[
        w\hspace{0.02cm}(d_1-d_2)=(f'(r;d_1)-f'(r;d_2))(d_1-d_2)\ge 0\quad\text{in $L^2(\Omega)$}.
    \]
    Since $d_1-d_2=-(\chi(0))/\lambda$ and $\lambda>0$, this implies $\int_{\Omega} w\,\chi(0)\, dx\le 0$.
    Combining this with \eqref{eq:unique-v:id} gives
    $\int_{\Omega}\chi(0)w\, dx=0$ and therefore
    $\int_0^T\|\sigma(t)\|_{L^2(\Omega)}^2\,dt=0$. Hence $\sigma\equiv 0$ in $Q$,
    so $w=\sigma(0)=0$ in $L^2(\Omega)$.
\end{proof}

\subsubsection{The Bouligand tangent cone}
Fix $(q,u)\in\mathcal C$ and let $(y,\varphi)\in W(0,T)\times W(0,T)$ be the
associated state--adjoint pair. Denote by
$f:\mathbb R\to\mathbb R$ the function in \cref{al0}.
Let $\Theta$ denote the set of elements $(h,v)\in L^2(0,T)\times L^2(\Omega)$ for which there exists a pair $(z,\psi)\in W(0,T)\times W(0,T)$ satisfying \eqref{Btclin}. We prove below that $\Theta = T_{\mathcal C}(q,u)$.

\paragraph{(Inclusion $T_{\mathcal C}(q,u)\subseteq\Theta$).} Fix $(h,v)\in T_{\mathcal C}(q,u)$. By definition of the
Bouligand tangent cone, there exist sequences $\varepsilon_n\searrow0$ and
$(h_n,v_n)\to(h,v)$ in $L^2(0,T)\times L^2(\Omega)$ such that
\begin{equation}\label{fcb}
    (q,u)+\varepsilon_n(h_n,v_n)\in\mathcal C\qquad\forall\,n\in\mathbb N.
\end{equation}
For each $n\in\mathbb N$, let $(y_n,\varphi_n)\in W(0,T)\times W(0,T)$ be the
state--adjoint pair associated with the tuple $(q+\varepsilon_n h_n,\ u+\varepsilon_n v_n)$. Due to \eqref{fcb}, for each $n\in\mathbb N$,
\begin{equation}\label{proxCondition}
    u+\varepsilon_n v_n
    =\operatorname{prox}_{(\mu/\lambda)|\cdot|}\!\left(-\frac{1}{\lambda}\,\varphi_n(0)\right)
    =f\!\left(-\frac{1}{\lambda}\,\varphi_n(0)\right)
    \quad\text{a.e.\ in }\Omega.
\end{equation}
Define, for each $n\in\mathbb N$,
$z_n:=\varepsilon_n^{-1}(y_n-y)$ and
$\psi_n:=\varepsilon_n^{-1}(\varphi_n-\varphi)$.
Subtracting the state and adjoint equations and using parabolic stability estimates,
we obtain $z_n\to z$ and $\psi_n\to\psi$ in $W(0,T)$, where
$(z,\psi)$ is the unique solution of the linearized state--adjoint system
\[
    \left\{
        \begin{aligned}
            \partial_t z - \Delta z + q(t)\,z + h(t)\,y &= 0
            && \text{in } Q,\\
            z &= 0
            && \text{on } \Sigma,\\
            z(0) &= v
            && \text{in } \Omega,
        \end{aligned}
    \right.
    \qquad
    \left\{
        \begin{aligned}
            -\partial_t \psi - \Delta \psi + q(t)\,\psi + h(t)\,\varphi &= z
            && \text{in } Q,\\
            \psi &= 0
            && \text{on } \Sigma,\\
            \psi(T) &= 0
            && \text{in } \Omega.
        \end{aligned}
    \right.
\]
It remains to identify the pointwise trace relation between $v$ and $\psi(0)$. Set
\[
    r:=-\frac{1}{\lambda}\,\varphi(0)\quad\text{and}\quad  r_n:=-\frac{1}{\lambda}\,\varphi_n(0)\quad \forall n\in\mathbb N.
\]
Using the continuous embedding $W(0,T)\hookrightarrow C\big([0,T];L^2(\Omega)\big)$, we conclude that  $(r_n-r)/\varepsilon_n\to -\psi(0)/\lambda$ in $L^2(\Omega)$. Moreover,
\eqref{proxCondition} and $u=f(r)$ imply $v_n=\varepsilon_n^{-1}\big(f(r_n)-f(r)\big)$ for all $n\in\mathbb N$. Up to a subsequence, we thus obtain, for a.e.\ $x\in\Omega$,
\[
    v_n(x)
    =\frac{f\bigl(r(x)+\varepsilon_n\,\tfrac{r_n(x)-r(x)}{\varepsilon_n}\bigr)-f(r(x))}{\varepsilon_n}
    \longrightarrow
    f'\!\left(r(x);\,-\frac{1}{\lambda}\psi(0,x)\right)\quad \text{as} \quad n\to+\infty.
\]
Hence $v=f'(-\varphi(0)/\lambda;-\psi(0)/\lambda)$ a.e.\ in $\Omega$, and thus $v$ and $\psi$ satisfy the relation \eqref{Btclin} (due to the representation \eqref{ddb}).

\paragraph{(Inclusion $\Theta\subseteq T_{\mathcal C}(q,u)$).} Fix $(h,v)\in L^2(0,T)\times L^2(\Omega)$ such that there exists a pair
$(z,\psi)\in W(0,T)\times W(0,T)$ satisfying \eqref{Btclin}. Let $\{\varepsilon_n\}_{n\in\mathbb N}\subseteq(0,+\infty)$ be a sequence such that $\varepsilon_n\searrow 0$. For each $n\in\mathbb N$, let \(q_n:=q+\varepsilon_n h\), and let \(u_n:=u_{q_n}\) be the unique lower-level
solution corresponding to \(q_n\). Let \((y_n,\varphi_n)\in W(0,T)\times W(0,T)\)
be the associated state--adjoint pair. Then \((q_n,u_n)\in\mathcal C\), and the
lower-level optimality system gives
\[
    u_n
    =
    \operatorname{prox}_{(\mu/\lambda)|\cdot|}
    \left(-\frac1\lambda\varphi_n(0)\right)
    =
    f\left(-\frac1\lambda\varphi_n(0)\right)
    \quad\text{a.e. in }\Omega .
\]
It follows that $(q_n,u_n)\in\mathcal C$. Define, for each $n\in\mathbb N$,  $v_n:=\varepsilon_n^{-1}(u_n-u)$. By construction, we see that
\[
    (q,u)+\varepsilon_n(h,v_n)=(q_n,u_n)\in\mathcal C\quad\forall\,n\in\mathbb N.
\]
It suffices to prove that $v_n\to v$ in $L^2(\Omega)$ to conclude the desired inclusion.
Set
\[
    z_n:=\frac{y_n-y}{\varepsilon_n}\quad\text{and}\quad
    \psi_n:=\frac{\varphi_n-\varphi}{\varepsilon_n}
    \quad \forall n\in\mathbb N.
\]
Subtracting the state equations for $y_n$ and $y$, using $q_n=q+\varepsilon_n h$,
and dividing by $\varepsilon_n$ yields
\begin{equation}\label{eq:lin-sys-n}
    \left\{
        \begin{aligned}
            \partial_t z_n-\Delta z_n+q_n(t)\,z_n+h(t)\,y&=0
            &&\text{in }Q,\\
            z_n&=0
            &&\text{on }\Sigma,\\
            z_n(0)&=v_n
            &&\text{in }\Omega,
        \end{aligned}
    \right.
    \quad
    \left\{
        \begin{aligned}
            -\partial_t \psi_n-\Delta \psi_n+q_n(t)\,\psi_n+h(t)\,\varphi&=z_n
            &&\text{in }Q,\\
            \psi_n&=0
            &&\text{on }\Sigma,\\
            \psi_n(T)&=0
            &&\text{in }\Omega .
        \end{aligned}
    \right.
\end{equation}
We now split the rest of the proof of this inclusion into two substeps.

\paragraph{Substep 1 (the sequence is bounded).}
For each \(n\in\mathbb N\), let \(\rho_n\in W(0,T)\) be the unique solution of
\begin{equation}\label{eq:data-adj}
    \left\{
        \begin{aligned}
            -\partial_t \rho_n-\Delta \rho_n+q_n(t)\,\rho_n &= -y_d
            &&\text{in }Q,\\
            \rho_n&=0 &&\text{on }\Sigma,\\
            \rho_n(T)&=0 &&\text{in }\Omega,
        \end{aligned}
    \right.
\end{equation}
and define \(b_{q_n}:=\rho_n(0)\in L^2(\Omega)\). Likewise, let \(\rho\in W(0,T)\) be
the unique solution of
\begin{equation}\label{eq:data-adj2}
    \left\{
        \begin{aligned}
            -\partial_t \rho-\Delta \rho+q(t)\,\rho &= -y_d
            &&\text{in }Q,\\
            \rho&=0 &&\text{on }\Sigma,\\
            \rho(T)&=0 &&\text{in }\Omega,
        \end{aligned}
    \right.
\end{equation}
and set \(b_q:=\rho(0)\in L^2(\Omega)\).
For each $n\in\mathbb N$, let $A_{q_n}:L^2(\Omega)\to L^2(\Omega)$ be the operator in \cref{al1}.
By linearity,
\[
    \varphi_n(0)=A_{q_n}u_n+b_{q_n}
    \quad\text{and}\quad
    \varphi(0)=A_q u+b_q.
\]
The proximal relations are equivalent to the inclusions
\begin{equation}\label{eq:incl-qn}
    0\in (\lambda I+A_{q_n})u_n+b_{q_n}+\mu\,\partial|u_n|\quad\text{and}\quad
    0\in (\lambda I+A_q)u+b_q+\mu\,\partial|u|.
\end{equation}
Choose $\zeta_n\in\partial|u_n|$ and $\zeta\in\partial|u|$ such that
\[
    (\lambda I+A_{q_n})u_n+b_{q_n}+\mu \zeta_n=0\quad\text{and}\quad
    (\lambda I+A_q)u+b_q+\mu \zeta=0.
\]
Subtracting and taking the $L^2(\Omega)$-inner product with $(u_n-u)$ gives
\begin{align*}
    &\lambda\|u_n-u\|_{L^2(\Omega)}^2
    +\langle A_{q_n}(u_n-u),u_n-u\rangle_{L^2(\Omega)}
    +\mu\langle \zeta_n-\zeta,u_n-u\rangle_{L^2(\Omega)}
    \\
    &\hspace{2cm}
    =
    -\langle (A_{q_n}-A_q)u,u_n-u\rangle_{L^2(\Omega)}
    -\langle b_{q_n}-b_q,u_n-u\rangle_{L^2(\Omega)}.
\end{align*}
By \cref{al1} \ref{al1:i} and the monotonicity of \(\partial|\cdot|\) we have
\[
    \langle A_{q_n}(u_n-u),u_n-u\rangle_{L^2(\Omega)}\ge 0\quad \text{and}\quad  \langle \zeta_n-\zeta,u_n-u\rangle_{L^2(\Omega)}\ge 0\qquad\forall n\in\mathbb N.
\]
Hence, $\lambda\|u_n-u\|_{L^2}^2
\le
\|(A_{q_n}-A_q)u\|_{L^2}\,\|u_n-u\|_{L^2}
+\|b_{q_n}-b_q\|_{L^2}\,\|u_n-u\|_{L^2}$, and therefore
\begin{equation}\label{eq:Lip-u}
    \|u_n-u\|_{L^2(\Omega)}
    \le
    \frac{1}{\lambda}\Big(\|(A_{q_n}-A_q)u\|_{L^2(\Omega)}+\|b_{q_n}-b_q\|_{L^2(\Omega)}\Big)
\end{equation}
for all $n\in\mathbb N$. By \cref{al1} \ref{al1:ii}, there exists $C_1>0$ such that
\[
    \|(A_{q_n}-A_q)u\|_{L^2(\Omega)}
    \le C_1\,\|q_n-q\|_{L^2(0,T)}\,\|u\|_{L^2(\Omega)}\quad \forall n\in\mathbb N.
\]
For the second term on the right-hand side of  \eqref{eq:Lip-u}, set \(w_n:=\rho_n-\rho\). Then each \(w_n\) satisfies
\begin{equation}\label{eq:wn}
    \left\{
        \begin{aligned}
            -\partial_t w_n-\Delta w_n+q_n(t)\,w_n &= \big(q(t)-q_n(t)\big)\,\rho
            &&\text{in }Q,\\
            w_n&=0 &&\text{on }\Sigma,\\
            w_n(T)&=0 &&\text{in }\Omega.
        \end{aligned}
    \right.
\end{equation}
Standard energy estimates yield a constant \(C_2>0\) such that
\[
    \|w_n(0)\|_{L^2(\Omega)} \le C_2\,\|(q_n-q)\rho\|_{L^2(Q)}
    \le C_2\,\|q_n-q\|_{L^2(0,T)}\,\|\rho\|_{L^\infty(0,T;L^2(\Omega))}\quad \forall n\in\mathbb N.
\]
Similarly, there exists $C_3>0$ such that
$\|\rho\|_{L^\infty(0,T;L^2(\Omega))}\le C_3\,\|y_d\|_{L^2(Q)}$, hence
\[
    \|b_{q_n}-b_q\|_{L^2(\Omega)}
    =\|w_n(0)\|_{L^2(\Omega)}
    \le C_2 C_3\,\|q_n-q\|_{L^2(0,T)}\,\|y_d\|_{L^2(Q)}\quad \forall n\in\mathbb N.
\]
Combining with \eqref{eq:Lip-u} we infer there exists $C_4>0$ such that
\[
    \|u_n-u\|_{L^2(\Omega)} \le C_4\|q_n-q\|_{L^2(0,T)}=C_4\varepsilon_n\|h\|\quad \forall n\in\mathbb N.
\]
Thereby concluding that $(v_n)_{n\in\mathbb N}$ is bounded in $L^2(\Omega)$.

\paragraph{Substep 2 (convergence and identification of the limit).}
Since $(v_n)_{n\in\mathbb N}$ is bounded in $L^2(\Omega)$ by Step~1, there exist a
subsequence (not relabeled) and $\bar v\in L^2(\Omega)$ such that $v_n \rightharpoonup \bar v $ weakly in $L^2(\Omega)$. We show that $\bar v=v$ and then conclude that the full sequence converges.

From \eqref{eq:lin-sys-n} and standard
energy estimates we obtain boundedness of $(z_n)_{n\in\mathbb N}$ in $W(0,T)$.
Using the Aubin--Lions compactness theorem, and yet more standard parabolic estimates, one can find that there exists a further
subsequence (not relabeled) and functions $\bar z,\bar\psi\in W(0,T)$ such that
\begin{equation}\label{eq:zn-psin-compact}
    z_n \rightharpoonup \bar z \ \text{ weakly in }W(0,T),\qquad z_n\to \bar z \ \text{ strongly in }L^2(Q), \qquad
    \psi_n \to \bar\psi \ \text{ strongly in }W(0,T).
\end{equation}
Passing to the limit in \eqref{eq:lin-sys-n} shows the desired PDE relations for $\bar z$ and $\bar\psi$. Let us now identify the trace relation in \eqref{Btclin} for $\bar v$.
Set $r:=-\frac1\lambda\varphi(0)$ and $r_n:=-\frac1\lambda\varphi_n(0)$. Then
$u=f(r)$ and $u_n=f(r_n)$, hence
\[
    v_n=\frac{u_n-u}{\varepsilon_n}
    =\frac{f(r_n)-f(r)}{\varepsilon_n}\quad \forall n\in\mathbb N.
\]
Moreover, since $\psi_n=(\varphi_n-\varphi)/\varepsilon_n$ and
$W(0,T)\hookrightarrow C([0,T];L^2(\Omega))$ continuously, we have
\begin{equation}\label{eq:rn-expansion}
    \frac{r_n-r}{\varepsilon_n}
    =-\frac{1}{\lambda}\,\psi_n(0)\to -\frac{1}{\lambda}\,\bar\psi(0)
    \quad\text{strongly in }L^2(\Omega).
\end{equation}
Define $d_n:=(r_n-r)/\varepsilon_n$ and $d:=-\bar\psi(0)/\lambda$. For each $n\in\mathbb N$, define
$F_n:\mathbb R\times\mathbb R\to\mathbb R$ by $F_n(\rho,\eta):=\varepsilon^{-1}_n\big(f(\rho+\varepsilon_n\eta)-f(\rho)\big)$. Since $u_n=f(r_n)$ and $u=f(r)$, we have $v_n=F_n(r;d_n)$. Using that $f$
is $1$--Lipschitz, we obtain that,  for a.e.\ in $\Omega$, $|F_n(r;d_n)-F_n(r;d)|\le |d_n-d|$,
hence $\|F_n(r;d_n)-F_n(r;d)\|_{L^2(\Omega)}\to0$. Moreover, we have, for every $n\in\mathbb N$,
\[
    |F_n(r;d)|
    =\left|\frac{f(r+\varepsilon_n d)-f(r)}{\varepsilon_n}\right|
    \le \frac{|\,\varepsilon_n d\,|}{\varepsilon_n}
    =|d|\quad\text{a.e in $\Omega$.}
\]
In addition, for a.e.\ $x\in\Omega$, the scalar limit
\[
    F_n(r;d)(x)=\frac{f(r(x)+\varepsilon_n d(x))-f(r(x))}{\varepsilon_n}
    \longrightarrow f'(r(x);d(x))
    \quad\text{as }n\to\infty
\]
holds by the definition of the directional derivative. The dominated convergence theorem then implies $\|F_n(r;d)-f'(r;d)\|_{L^2(\Omega)}\to 0$. By the triangle inequality
\[
    \|v_n-f'(r;d)\|_{L^2(\Omega)}
    \le
    \|v_n-F_n(r;d)\|_{L^2(\Omega)}+\|F_n(r;d)-f'(r;d)\|_{L^2(\Omega)}
    \longrightarrow 0.
\]
This can be rewritten as $v_n\to f'(r;d)$ strongly in $L^2(\Omega)$. In particular, $\bar v$ satisfies
\begin{equation}\label{eq:vbar-dd}
    \bar v=f'(r;d)=f'\!\left(-\frac1\lambda\varphi(0);\,-\frac1\lambda\bar\psi(0)\right)
    \quad\text{a.e.\ in }\Omega,
\end{equation}
and therefore \eqref{Btclin} holds for $(\bar v,\bar z,\bar\psi)$ (using
\eqref{ddb}).

Thus, for the fixed $h$, the triple $(\bar v,\bar z,\bar\psi)$ satisfies
\eqref{Btclin}. By assumption, $(v,z,\psi)$ also satisfies
\eqref{Btclin} with the same $h$. \cref{lem:unique-v} implies
$\bar v=v$ in $L^2(\Omega)$. Consequently, $v_n\rightharpoonup v$ weakly in
$L^2(\Omega)$ along the extracted subsequence. Since we already proved that
$v_n\to f'(r;d)$ strongly in $L^2(\Omega)$, the whole sequence converges and $v_n\to v$ strongly in $L^2(\Omega)$. This concludes the proof of the inclusion.

\subsection{Proof of Corollary \ref{Ctc}}
Fix $(q,u)\in\mathcal C$ and let $(y,\varphi)\in W(0,T)\times W(0,T)$ be the
associated state--adjoint pair. Denote by
$f:\mathbb R\to\mathbb R$ the function in \cref{al0}.

Let $\Theta$ denote the set of elements $(h,v)\in L^2(0,T)\times L^2(\Omega)$ for which there exists a pair $(z,\psi)\in W(0,T)\times W(0,T)$ satisfying \eqref{Btclin}. We already know from \cref{p1} that $T_{\mathcal C}(q,u)=\Theta$, and it follows from the basic definitions of both cones that $\hat T_{\mathcal C}(q,u)\subseteq T_{\mathcal C}(q,u)$. It is enough then to prove that $\Theta \subseteq \hat T_{\mathcal C}(q,u)$ to conclude the proof; we do this below.

Fix $(h,v)\in\Theta$, and let $(z,\psi)\in W(0,T)\times W(0,T)$ be a pair
satisfying \eqref{Btclin}. We prove that $(h,v)\in \hat
T_{\mathcal C}(q,u)$.
Let $(q_n,u_n)_{n\in\mathbb N}\subseteq \mathcal C$ be any sequence such that
$(q_n,u_n)\to(q,u)$ in $L^2(0,T)\times L^2(\Omega)$, and let $t_n\searrow 0$ be
arbitrary. Define, for each $n\in\mathbb N$,
\[
    \tilde q_n:=q_n+t_n h,
    \qquad
    \tilde u_n:=u_{\tilde q_n},
    \qquad
    h_n:=h,
    \qquad
    v_n:=\frac{\tilde u_n-u_n}{t_n}.
\]
Therefore $(q_n,u_n)+t_n(h_n,v_n)=(\tilde q_n,\tilde u_n)\in\mathcal C$ for all $n\in\mathbb N$. Thus it remains to show that $v_n\to v$ in $L^2(\Omega)$. Let $(y_n,\varphi_n)$ (resp.\ $(\tilde y_n,\tilde\varphi_n)$) be the
state--adjoint pair associated with $(q_n,u_n)$ (resp.\ $(\tilde q_n,\tilde
u_n)$). By standard parabolic stability estimates with respect to the
coefficient and the initial datum (using that  $(q_n,u_n)\to(q,u)$ strongly in $L^2(0,T)\times L^2(\Omega)$), we can conclude that $ y_n\to y$ and $\varphi_n\to \varphi$ strongly in $W(0,T)$;
hence in particular $\varphi_n(0)\to\varphi(0)$ strongly in $L^2(\Omega)$. Define
\[
    z_n:=\frac{\tilde y_n-y_n}{t_n}\quad \text{and}\quad \psi_n:=\frac{\tilde\varphi_n-\varphi_n}{t_n}\qquad \forall n\in\mathbb N.
\]
Subtraction of the two state equations and the two adjoint equations
gives the linearized system
\begin{equation}\label{eq:ctc-linsys-n}
    \left\{
        \begin{aligned}
            \partial_t z_n-\Delta z_n+\tilde q_n(t)\,z_n+h(t)\,y_n&=0
            &&\text{in }Q,\\
            z_n&=0
            &&\text{on }\Sigma,\\
            z_n(0)&=v_n
            &&\text{in }\Omega,
        \end{aligned}
    \right.
    \quad
    \left\{
        \begin{aligned}
            -\partial_t\psi_n-\Delta \psi_n+\tilde q_n(t)\,\psi_n+h(t)\,\varphi_n&=z_n
            &&\text{in }Q,\\
            \psi_n&=0
            &&\text{on }\Sigma,\\
            \psi_n(T)&=0
            &&\text{in }\Omega .
        \end{aligned}
    \right.
\end{equation}
Arguing exactly as in Step~1 of the proof of the inclusion
$\Theta\subseteq T_{\mathcal C}(q,u)$ in \cref{p1}, but now applied
to the pair of coefficients $q_n$ and $\tilde q_n=q_n+t_n h$, one obtains a
Lipschitz estimate
\[
    \|\tilde u_n-u_n\|_{L^2(\Omega)}\le C\,\|\tilde q_n-q_n\|_{L^2(0,T)}
    =C\,t_n\|h\|_{L^2(0,T)}
\]
with a constant $C>0$ independent of $n$; this because $(q_n)$ is bounded and
\cref{al1} \ref{al1:ii} applies on bounded balls. Hence $(v_n)$ is bounded in
$L^2(\Omega)$, and we can extract a weakly convergent subsequence (not relabeled) so that $v_n \rightharpoonup\bar v$ for some $\bar v\in L^2(\Omega)$. From \eqref{eq:ctc-linsys-n} and standard energy estimates,
$(z_n)$ is bounded in $W(0,T)$. Using the Aubin--Lions compactness theorem, and yet more standard parabolic estimates, one can find that there exists a further
subsequence (not relabeled) and functions $\bar z,\bar\psi\in W(0,T)$ such that
\begin{equation}\label{eq:ctc-subseq}
    z_n\to \bar z \ \text{ strongly in }L^2(Q)\quad\text{and} \quad
    \psi_n \to \bar\psi \ \text{ strongly in }W(0,T).
\end{equation}
Since $\tilde q_n\to q$ in $L^2(0,T)$ and $y_n\to y$, $\varphi_n\to\varphi$ in
$W(0,T)$, passing to the limit in \eqref{eq:ctc-linsys-n} yields that the couple
$(\bar z,\bar\psi)$ satisfies the system
\[
    \left\{
        \begin{aligned}
            \partial_t \bar z - \Delta \bar z + q(t)\,\bar z + h(t)\,y &= 0
            && \text{in } Q,\\
            \bar z &= 0
            && \text{on } \Sigma,\\
            \bar z(0) &= \bar v
            && \text{in } \Omega,
        \end{aligned}
    \right.
    \qquad
    \left\{
        \begin{aligned}
            -\partial_t \bar \psi - \Delta \bar \psi + q(t)\,\bar \psi + h(t)\,\varphi &= \bar z
            && \text{in } Q,\\
            \bar \psi &= 0
            && \text{on } \Sigma,\\
            \bar \psi(T) &= 0
            && \text{in } \Omega.
        \end{aligned}
    \right.
\]
Set, for each $n\in\mathbb N$,
\[
    r_n:=-\frac1\lambda\,\varphi_n(0),\qquad
    \tilde r_n:=-\frac1\lambda\,\tilde\varphi_n(0),\qquad
    d_n:=\frac{\tilde r_n-r_n}{t_n}=-\frac1\lambda\,\psi_n(0).
\]
Then $u_n=f(r_n)$ and $\tilde u_n=f(\tilde r_n)=f(r_n+t_n d_n)$, hence
\begin{equation}\label{eq:ctc-vn-prox}
    v_n=\frac{f(r_n+t_n d_n)-f(r_n)}{t_n}\qquad\text{a.e.\ in }\Omega.
\end{equation}
From $\varphi_n(0)\to\varphi(0)$ in $L^2(\Omega)$ we have $r_n\to r:=-\varphi(0)/\lambda$
in $L^2(\Omega)$, and from \eqref{eq:ctc-subseq} we have
$d_n\to d:=-\bar\psi(0)/\lambda$ in $L^2(\Omega)$.

Recall the assumption $|\{|\varphi(0)|=\mu\}|=0$.
Fix $\delta>0$ and set $ B_\delta:=\{\,||r|-\tau|<\delta\,\}$. Then $|B_\delta|\to0$ as $\delta \searrow0$. Define $G_{n,\delta}:=\{|r_n-r|\ge \delta/4\}$. Since $r_n\to r$ in $L^2(\Omega)$, by Chebyshev's inequality,
\[
    |G_{n,\delta}|\le \frac{16}{\delta^2}\,\|r_n-r\|_{L^2(\Omega)}^2\to0.
\]
On $\Omega\setminus(B_\delta\cup G_{n,\delta})$ we have $||r|-\tau|\ge\delta$ and
$|r_n-r|<\delta/4$, hence
\[
    \bigl||r_n|-\tau\bigr|
    \ge \bigl||r|-\tau\bigr|-||r_n|-|r||
    \ge \delta-\delta/4
    =3\delta/4
    \ge \delta/2.
\]
Moreover, set $ F_{n,\delta}:=\{|t_n d_n|\ge \delta/4\}
=\Bigl\{|d_n|\ge \frac{\delta}{4t_n}\Bigr\}$. We claim that, for fixed $\delta>0$,
\begin{equation}\label{eq:ctc-tail}
    \|d_n\,\mathbbm{1}_{F_{n,\delta}}\|_{L^2(\Omega)}\to0\quad\text{as } n\to+\infty.
\end{equation}
Fix $\varepsilon>0$ and choose
$M>0$ such that $\int_{\{|d|>M\}}|d|^2<\varepsilon$. For $n\in\mathbb N$ large enough,
$\|d_n-d\|_{L^2(\Omega)}^2<\varepsilon$ and $\delta/(4t_n)\ge 2M$. Then
\[
    \int_{F_{n,\delta}} |d_n|^2
    =\int_{\{|d_n|\ge \delta/(4t_n)\}}|d_n|^2
    \le \int_{\{|d_n|\ge 2M\}}|d_n|^2
    \le \int_{\{|d|>M\}}|d_n|^2 + \int_{\{|d_n-d|>M\}}|d_n|^2.
\]
Using $|d_n|^2\le 2|d|^2+2|d_n-d|^2$, we estimate
\[
    \int_{\{|d|>M\}}|d_n|^2 \le 2\int_{\{|d|>M\}}|d|^2 + 2\|d_n-d\|_{L^2}^2 \le 4\varepsilon,
\]
and since $|\{|d_n-d|>M\}|\le \|d_n-d\|_{L^2(\Omega)}^2/M^2\to0$ and $d\in L^2(\Omega)$,
\[
    \int_{\{|d_n-d|>M\}}|d_n|^2
    \le 2\int_{\{|d_n-d|>M\}}|d|^2 + 2\int_{\{|d_n-d|>M\}}|d_n-d|^2 \to 0\quad \text{as }n\to+\infty.
\]
This proves \eqref{eq:ctc-tail}. Now define
\[
    S_{n,\delta}:=\Omega\setminus(B_\delta\cup G_{n,\delta}\cup F_{n,\delta}).
\]
For $x\in S_{n,\delta}$ we have $\bigl||r_n(x)|-\tau\bigr|\ge\delta/2$ and
$|t_nd_n(x)|<\delta/4$, hence the segment
$\{ r_n(x) + s t_n d_n(x) : s \in [0,1] \}$ does not cross the thresholds $\pm\tau$.
Since $f$ is affine on each region $\{s\in\mathbb R:\,|s|<\tau\}$ and $\{s\in\mathbb R:\,|s|>\tau\}$, the quotient
\eqref{eq:ctc-vn-prox} coincides pointwise on $S_{n,\delta}$ with the directional
derivative $f'(r_n;d_n)$. Using also that $|v_n|\le |d_n|$ and $|f'(r_n;d_n)|\le |d_n|$ (by the $1$-Lipschitzness of
$f$ and \eqref{ddb}), we obtain
\[
    |v_n-f'(r_n;d_n)|
    \le 2|d_n|\,\mathbbm{1}_{B_\delta\cup G_{n,\delta}\cup F_{n,\delta}}
    \quad\text{a.e.\ in }\Omega,
\]
and therefore
\[
    \|v_n-f'(r_n;d_n)\|_{L^2(\Omega)}
    \le 2\|d_n\mathbbm{1}_{B_\delta}\|_{L^2}
    +2\|d_n\mathbbm{1}_{G_{n,\delta}}\|_{L^2}
    +2\|d_n\mathbbm{1}_{F_{n,\delta}}\|_{L^2}.
\]
For fixed $\delta$, we have $\|d_n\mathbbm{1}_{G_{n,\delta}}\|_{L^2}\to0$ since
\[
    \|d_n\mathbbm{1}_{G_{n,\delta}}\|_{L^2}
    \le \|d_n-d\|_{L^2}+\|d\,\mathbbm{1}_{G_{n,\delta}}\|_{L^2}\to0,
\]
and $\|d_n\mathbbm{1}_{F_{n,\delta}}\|_{L^2}\to0$ by \eqref{eq:ctc-tail}. Moreover, $\limsup_{n\to\infty}\|d_n\mathbbm{1}_{B_\delta}\|_{L^2}
\le \|d\,\mathbbm{1}_{B_\delta}\|_{L^2}$. Hence
\[
    \limsup_{n\to\infty}\|v_n-f'(r_n;d_n)\|_{L^2(\Omega)}
    \le 2\|d\,\mathbbm{1}_{B_\delta}\|_{L^2(\Omega)}.
\]
Letting $\delta\downarrow0$ and using $|B_\delta|\to0$ yields $\|v_n-f'(r_n;d_n)\|_{L^2(\Omega)}\to0$. Next, since $\{|\varphi(0)|=\mu\}$ has null measure, the map $(\rho,\eta)\mapsto f'(\rho;\eta)$ is continuous at
$(r(x),d(x))$ for a.e.\ $x\in\Omega$ (indeed $f'(\rho;\eta)=0$ for $|\rho|<\tau$
and $f'(\rho;\eta)=\eta$ for $|\rho|>\tau$). Together with $r_n\to r$ and
$d_n\to d$ in $L^2(\Omega)$, a localization argument as above yields $ f'(r_n;d_n)\to f'(r;d)$ in $L^2(\Omega)$.
Combining the last two convergences gives
\begin{equation}
    v_n\to f'(r;d)
    =f'\!\left(-\frac1\lambda\varphi(0);\,-\frac1\lambda\bar\psi(0)\right)
    \quad\text{in }L^2(\Omega).
\end{equation}
In particular, $\bar v=f'(r;d)$ a.e.\ in $\Omega$. By \cref{al0}, this is
exactly the trace relation in \eqref{Btclin}. Therefore the triple $(\bar v,\bar z,\bar\psi)$ satisfies
\eqref{Btclin} with the given $h$. By assumption, $(v,z,\psi)$ also satisfies \eqref{Btclin} with the
same $h$. \cref{lem:unique-v} therefore implies $\bar v=v$ in $L^2(\Omega)$. Since the preceding argument applies to every subsequence of $(v_n)$,
every subsequence has a further subsequence converging strongly to $v$. Hence the full sequence satisfies $v_n\to v$ in $L^2(\Omega)$. This completes the proof.

\subsection{Proof of Proposition \ref{Boucon}}

We prove the result by establishing the two inclusions. We first collect a few
preliminary lemmas that will be used to pass between the polar inequality and
the adjoint representation. Using  \cref{p1}, we then verify that every pair admitting the stated
multiplier representation belongs to the normal cone. For the converse, we
start from the polar inequality and use the observability condition to recover
the adjoint variables and the corresponding complementarity conditions.

\subsubsection{Some preparatory lemmas}
We begin with a few auxiliary results that will be used to derive the
multiplier representation from the polar inequality.
\begin{lemma}\label{lem:Theta-surj}
    Let $(q,u)\in\mathcal C$ with associated state--adjoint pair
    $(y,\varphi)\in W(0,T)\times W(0,T)$. Let $\Theta\subseteq L^2(0,T)\times L^2(\Omega)$
    be the set of all pairs $(h,v)$ for which there exists $(z,\psi)\in W(0,T)\times W(0,T)$
    satisfying \eqref{Btclin}. Then, for every $h\in L^2(0,T)$ there exists
    a unique $v\in L^2(\Omega)$ such that $(h,v)\in\Theta$. In particular, the projection
    of $\Theta$ onto its first component equals $L^2(0,T)$.
\end{lemma}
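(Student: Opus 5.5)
Uniqueness is already contained in \cref{lem:unique-v}: two triples satisfying \eqref{Btclin} with the same $h$ must have the same $v$. It therefore remains to prove existence. Fix $h\in L^2(0,T)$; I would give two routes. The first follows directly from \cref{p1}. Take $\varepsilon_n\searrow0$, put $q_n:=q+\varepsilon_n h$, $u_n:=u_{q_n}$ and $v_n:=\varepsilon_n^{-1}(u_n-u)$. Substep~1 of the proof of the inclusion $\Theta\subseteq T_{\mathcal C}(q,u)$ in \cref{p1} yields the Lipschitz bound $\|u_n-u\|_{L^2(\Omega)}\le C_4\varepsilon_n\|h\|_{L^2(0,T)}$. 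That substep did not use the existence of a solution of \eqref{Btclin}; it used only \cref{al1} and energy estimates. Hence $(v_n)$ is bounded in $L^2(\Omega)$.

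Next I would rerun Substep~2 verbatim. Extract a subsequence with $v_n\rightharpoonup\bar v$. The pair $(z_n,\psi_n)$ solves \eqref{eq:lin-sys-n} and is bounded in $W(0,T)$. Aubin--Lions gives $z_n\to\bar z$ in $L^2(Q)$ and $\psi_n\to\bar\psi$ in $W(0,T)$. Passing to the limit shows that $(\bar z,\bar\psi)$ solves the linearized PDE pair in \eqref{Btclin} with datum $\bar v$. The pointwise argument with $F_n$, the $1$-Lipschitz bound and dominated convergence then gives $v_n\to f'(r;d)$ strongly, where $r=-\varphi(0)/\lambda$ and $d=-\bar\psi(0)/\lambda$. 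Hence $\bar v=f'(r;d)$, and by \cref{al0} the triple $(\bar v,\bar z,\bar\psi)$ satisfies \eqref{Btclin}. So $(h,\bar v)\in\Theta$. None of these steps used the a priori existence of $v$, so this establishes existence.

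The second route is self-contained and I would mention it as a cross-check. Let $(z^h,\psi^h)$ be the solution of the linearized system with $v=0$ and source $h$. By linearity, $\psi(0)=A_qv+\psi^h(0)$. The trace relation reads $v=f'(r;-\lambda^{-1}(A_qv+\psi^h(0)))$. Since $d\mapsto f'(r;d)$ is, pointwise, the derivative of a convex piecewise quadratic function (it is monotone and is a projection-type map), this is the optimality condition of a strongly convex problem. Concretely, $v=f'(r;w)$ is equivalent to $w-v\in N_{K}(v)$ for the closed convex set $K$ of functions vanishing on $\{|r|<\tau\}$, nonnegative on $\{r=\tau\}$ and nonpositive on $\{r=-\tau\}$. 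Indeed $f'(r;\cdot)$ is the pointwise projection onto $K$. The equation then becomes the variational inequality $0\in(\lambda I+A_q)v+\psi^h(0)+N_K(v)$, up to scaling. Because $A_q$ is bounded, self-adjoint and nonnegative by \cref{al1}\ref{al1:i}, Lax--Milgram/Stampacchia (or minimization of $\tfrac12\langle(\lambda I+A_q)v,v\rangle+\langle\psi^h(0),v\rangle$ over $K$) gives a unique solution.

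The main point to check carefully is the identification $f'(r;\cdot)=\proj_K$ pointwise. On $\{|r|>\tau\}$ it is the identity; on $\{|r|<\tau\}$ it is $0$; on $\{r=\pm\tau\}$ it is $\max\{\cdot,0\}$ or $\min\{\cdot,0\}$, respectively, which agrees with \eqref{ddb}. I would also check that the scaling by $\lambda$ matches, i.e.\ that $v=\proj_K(-\lambda^{-1}\psi(0))$ is equivalent to $0\in\lambda v+\psi(0)+N_K(v)$, using that $K$ is a cone. With this in hand, either route proves the lemma, and the projection statement follows at once.
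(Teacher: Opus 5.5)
Your proposal is correct, and your second route is essentially the paper's proof. The paper splits $\psi(0)=\psi^h(0)+A_qv$ and applies the Lions--Stampacchia theorem to the coercive form $\langle (A_q+\lambda I)v,w\rangle_{L^2(\Omega)}$ on the same cone $K$ with datum $g=\psi^h(0)$. It then localizes the normal-cone condition for $s=\psi(0)+\lambda v$ case by case to recover the trace relation in \eqref{Btclin}, and takes uniqueness from \cref{lem:unique-v}. Your pointwise identification $f'(r;\cdot)=\proj_K$ is the same computation in packaged form. The rescaling by $\lambda$ only needs that $N_K(v)$ is a cone, which holds for any convex $K$, not just for cones $K$.

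Your first route is genuinely different, and it is also valid. In the proof of $\Theta\subseteq T_{\mathcal C}(q,u)$ in \cref{p1}, the prescribed triple $(v,z,\psi)$ enters only at the very end, where \cref{lem:unique-v} identifies $\bar v$ with $v$. So Substeps~1 and~2 on their own produce a solution $(\bar v,\bar z,\bar\psi)$ of \eqref{Btclin}. Since \cref{p1} is proved before and independently of this lemma, there is no circularity.

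That route gives you more information. By uniqueness, the whole family of difference quotients $\varepsilon^{-1}(u_{q+\varepsilon h}-u_q)$ converges to the unique $v$. Hence $v$ is the directional derivative $\mathcal S'(q;h)$ of the solution map, and $T_{\mathcal C}(q,u)=\Theta$ is the graph of $\mathcal S'(q;\cdot)$. The cost is that it rests on heavier compactness machinery: the Lipschitz estimate for $\mathcal S$, Aubin--Lions, and strong convergence of $\psi_n$ in $W(0,T)$. It also only reaches data of the form $g=\psi^h(0)=Bh\in\operatorname{ran}B$.

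The paper's variational-inequality formulation instead solves the trace relation for every $g\in L^2(\Omega)$, with a Lipschitz solution map $g\mapsto\mathbf v(g)$. This is exactly what is reused in the proof of \cref{Boucon}, where the polar inequality is extended from $\operatorname{ran}B$ to its closure $L^2(\Omega)$.
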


\begin{proof}
    Fix $h\in L^2(0,T)$. Let $(z^h,\psi^h)\in W(0,T)\times W(0,T)$ be the unique weak
    solution of
    \[
        \left\{
            \begin{aligned}
                \partial_t z^h-\Delta z^h+q(t)\,z^h &= -h(t)\,y
                && \text{in }Q,\\
                z^h&=0 && \text{on }\Sigma,\\
                z^h(0)&=0 && \text{in }\Omega,
            \end{aligned}
        \right.
        \qquad
        \left\{
            \begin{aligned}
                -\partial_t \psi^h-\Delta \psi^h+q(t)\,\psi^h &= z^h-h(t)\,\varphi
                && \text{in }Q,\\
                \psi^h&=0 && \text{on }\Sigma,\\
                \psi^h(T)&=0 && \text{in }\Omega.
            \end{aligned}
        \right.
    \]
    Set $g:=\psi^h(0)\in L^2(\Omega)$. For $v\in L^2(\Omega)$, let $(z_v,\psi_v)\in W(0,T)\times W(0,T)$
    be the unique solution of \eqref{Btclinn} (that is, \eqref{Btclin} with $h=0$ and initial value $v$),
    and recall from \cref{al1} that $A_q:L^2(\Omega)\to L^2(\Omega)$ defined by
    $A_q v:=\psi_v(0)$ is bounded, self-adjoint, and satisfies
    $\langle A_q v,v\rangle_{L^2(\Omega)}\ge 0$ for all $v\in L^2(\Omega)$.

    Define the closed convex cone
    \[
        K:=\Bigl\{\,v\in L^2(\Omega):\ v=0 \text{ a.e.\ on }\{|\varphi(0)|<\mu\},\ \
            v\ge 0 \text{ a.e.\ on }\{\varphi(0)=-\mu\},\ \
        v\le 0 \text{ a.e.\ on }\{\varphi(0)=\mu\}\Bigr\}.
    \]
    Consider the bilinear form $a:L^2(\Omega)\times L^2(\Omega)\to\mathbb R$ given by $a(v,w):=\langle (A_q+\lambda I)v,w\rangle_{L^2(\Omega)}$.
    It is bounded and coercive since, by \cref{al1} \ref{al1:i},
    \[
        a(v,v)=\langle A_q v,v\rangle_{L^2(\Omega)}+\lambda\|v\|_{L^2(\Omega)}^2
        \ge \lambda\|v\|_{L^2(\Omega)}^2\quad\forall v\in L^2(\Omega).
    \]
    Define $\ell\in (L^2(\Omega))^*$ by $\ell(w):=-\langle g,w\rangle_{L^2(\Omega)}$.
    By the Lions--Stampacchia theorem \cite[Theorem 2.1]{LS_1967}, there exists a unique $v\in K$
    such that
    \begin{equation}\label{eq:VI-Theta-surj}
        a(v,r-v)\ge \ell(r-v)\quad \forall r\in K,
    \end{equation}
    i.e., $\big\langle g+(A_q+\lambda I)v,\ r-v\big\rangle_{L^2(\Omega)}\ge 0$ for all $r\in K$. Now set $(z,\psi):=(z^h+z_v,\ \psi^h+\psi_v)$. By construction, $(z,\psi)$ solves the two linear equations in \eqref{Btclin}
    with the given $h$ and initial trace $z(0)=v$. Moreover,
    \[
        \psi(0)=\psi^h(0)+\psi_v(0)=g+A_q v.
    \]
    Define $s:=\psi(0)+\lambda v=g+(A_q+\lambda I)v$. Then \eqref{eq:VI-Theta-surj}
    is equivalent to $-s\in N_K(v)$, where $N_K(v)$ denotes the normal cone of $K$ at $v$.
    Since $K$ is defined by pointwise sign constraints, localization of the normal
    cone condition yields
    \[
        s=0 \ \text{a.e.\ on }\{|\varphi(0)|>\mu\},\qquad
        s\ge 0 \ \text{a.e.\ on }\{\varphi(0)=-\mu\},\qquad
        s\le 0 \ \text{a.e.\ on }\{\varphi(0)=\mu\},
    \]
    and $v\,s=0$ a.e.\ on $\{\varphi(0)=\pm\mu\}$, while $v=0$ a.e.\ on $\{|\varphi(0)|<\mu\}$
    since $v\in K$. These relations are exactly equivalent to trace relation in \eqref{Btclin}; namely,
    on $\{|\varphi(0)|>\mu\}$ we have $s=0$ and hence $v=-\psi(0)/\lambda$; on
    $\{\varphi(0)=-\mu\}$ we have $v\ge0$, $s\ge0$ and $vs=0$, which gives
    $v=-(1/\lambda)\min\{\psi(0),0\}$; and on $\{\varphi(0)=\mu\}$ we have $v\le0$,
    $s\le0$ and $vs=0$, which gives $v=-(1/\lambda)\max\{\psi(0),0\}$.

    Thus $(h,v)\in\Theta$, proving existence. Uniqueness of $v$ for the fixed $h$
    follows from \cref{lem:unique-v}.
\end{proof}

\begin{lemma}\label{aliv}
    Let $(q,u)\in\mathcal C$ be given. Let $y,\varphi\in W(0,T)$ denote its associated state-adjoint pair. Let $B: L^2(0,T)\to L^2(\Omega)$ be given by $B(h):=\psi_{h}(0)$, where
    \begin{equation}\label{Btclinnali}
        \left\{
            \begin{aligned}
                \partial_t z_h-\Delta z_h+q(t)\,z_h&=-hy
                &&\text{in }Q,\\
                z_h&=0
                &&\text{on }\Sigma,\\
                z_h(0)&=0
                &&\text{in }\Omega,
            \end{aligned}
        \right.
        \qquad
        \left\{
            \begin{aligned}
                -\partial_t \psi_h-\Delta \psi_h+q(t)\,\psi_h&=z_h-h\varphi
                &&\text{in }Q,\\
                \psi_h&=0
                &&\text{on }\Sigma,\\
                \psi_h(T)&=0
                &&\text{in }\Omega .
            \end{aligned}
        \right.
    \end{equation}
    Then, the operator $B:L^2(0,T)\to L^2(\Omega)$ is linear and bounded.
\end{lemma}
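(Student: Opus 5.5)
Linearity of $B$ is immediate: for fixed $(q,u)\in\mathcal C$, with $y,\varphi$ fixed, the right-hand sides $-hy$ and $z_h-h\varphi$ in \eqref{Btclinnali} depend linearly on $h$ (the second through $z_h$). By uniqueness of weak solutions of the forward and backward problems, the maps $h\mapsto z_h$ and $h\mapsto\psi_h$ are linear, and hence so is $h\mapsto\psi_h(0)$. The substance of the lemma is therefore boundedness, which I would obtain by chaining the a priori estimates recorded in the subsection on mild and weak solutions.

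First, for the forward problem, the source $-hy$ belongs to $L^2(Q)\subseteq L^2(0,T;H^{-1}(\Omega))$. The product estimate for $q\,y$ stated in the preliminaries gives
\[
\|hy\|_{L^2(Q)}\le \|h\|_{L^2(0,T)}\|y\|_{L^\infty(0,T;L^2(\Omega))}.
\]
Combined with the energy estimate for \eqref{samplepde} (with $M=\|q\|_{L^2(0,T)}$ and zero initial datum), this yields
\[
\|z_h\|_{W(0,T)}\le C\|y\|_{L^\infty(0,T;L^2(\Omega))}\|h\|_{L^2(0,T)}.
\]
Second, the backward equation has source $z_h-h\varphi$. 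Here $\|z_h\|_{L^2(Q)}$ is controlled by the previous bound, and $\|h\varphi\|_{L^2(Q)}\le\|h\|_{L^2(0,T)}\|\varphi\|_{L^\infty(0,T;L^2(\Omega))}$. The analogous backward a priori estimate, with zero terminal datum, then gives
\[
\|\psi_h\|_{W(0,T)}\le C'\bigl(\|y\|_{L^\infty(0,T;L^2(\Omega))}+\|\varphi\|_{L^\infty(0,T;L^2(\Omega))}\bigr)\|h\|_{L^2(0,T)}.
\]

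Finally, the continuous embedding $W(0,T)\hookrightarrow C([0,T];L^2(\Omega))$ bounds $\|\psi_h(0)\|_{L^2(\Omega)}$ by a constant multiple of $\|\psi_h\|_{W(0,T)}$, which proves that $B$ is bounded. The only point needing care, and it is a mild one, is that the constants depend solely on $\|q\|_{L^2(0,T)}$, $T$ and $\Omega$. This holds because the coefficient in both equations is the fixed $q$, while $y$ and $\varphi$ are fixed elements of $W(0,T)\subseteq L^\infty(0,T;L^2(\Omega))$. I do not expect any genuine obstacle.
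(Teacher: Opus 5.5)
Your proposal is correct and follows the same route as the paper, whose proof consists only of a reference to standard energy estimates. You have written out that chain explicitly: the product bounds for $hy$ and $h\varphi$, the forward and then the backward a priori estimates with constants depending on $\|q\|_{L^2(0,T)}$, $T$ and $\Omega$, and the embedding $W(0,T)\hookrightarrow C([0,T];L^2(\Omega))$ to control the trace $\psi_h(0)$.
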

\begin{proof}
    The result follows from  standard energy estimates.
\end{proof}

\begin{lemma}\label{lemadjoint}
    Let $(q,u)\in\mathcal C$ be given. Let $y,\varphi\in W(0,T)$ denote its associated state-adjoint pair. Let $B:L^2(0,T)\to L^2(\Omega)$ be the mapping in \cref{aliv}. Then its adjoint $B^*:L^2(\Omega)\to L^2(0,T)$ is given by
    \[
        B^*(\pi)(t)=- \int_{\Omega}\big[y(t,x)\hspace{0.02cm}\eta_\pi(t,x) + \varphi(t,x)\hspace{0.02cm}\xi_\pi(t,x)\big]\, dx,
    \]
    where $\xi_\pi,\eta_\pi\in W(0,T)$ are given by
    \begin{equation}\label{Btclinnalinor}
        \left\{
            \begin{aligned}
                \partial_t \xi_\pi-\Delta \xi_\pi+q(t)\,\xi_\pi&=0
                &&\text{in }Q,\\
                \xi_\pi&=0
                &&\text{on }\Sigma,\\
                \xi_\pi(0)&=\pi
                &&\text{in }\Omega,
            \end{aligned}
        \right.
        \qquad
        \left\{
            \begin{aligned}
                -\partial_t \eta_\pi-\Delta \eta_\pi+q(t)\,\eta_\pi&=\xi_\pi
                &&\text{in }Q,\\
                \eta_\pi&=0
                &&\text{on }\Sigma,\\
                \eta_\pi(T)&=0
                &&\text{in }\Omega .
            \end{aligned}
        \right.
    \end{equation}
\end{lemma}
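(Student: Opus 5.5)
We will verify the adjoint identity directly: for arbitrary $h\in L^2(0,T)$ and $\pi\in L^2(\Omega)$ we show
\[
\langle B h,\pi\rangle_{L^2(\Omega)}=\langle \psi_h(0),\pi\rangle_{L^2(\Omega)}=-\int_0^T h(t)\int_\Omega\bigl[y\,\eta_\pi+\varphi\,\xi_\pi\bigr]\,dx\,dt .
\]
Since $B$ is bounded by \cref{aliv}, this identifies $B^*$. The right-hand side defines an element of $L^2(0,T)$, because $y,\varphi,\xi_\pi,\eta_\pi\in L^\infty(0,T;L^2(\Omega))$ and Cauchy--Schwarz in $x$ makes the inner integral bounded in $t$. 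The argument consists of two duality pairings, each resting on the integration by parts formula \eqref{eq:ibp-W} together with the weak formulations of the equations.

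First, we test the backward equation for $\psi_h$ in \eqref{Btclinnali} with $\xi_\pi$ and the forward equation for $\xi_\pi$ in \eqref{Btclinnalinor} with $\psi_h$, integrate over $(0,T)$, and subtract. The gradient terms and the zero-order terms $q(t)\psi_h\xi_\pi$ cancel. Using \eqref{eq:ibp-W} with $\psi_h(T)=0$ and $\xi_\pi(0)=\pi$, the time-derivative terms yield $\langle\psi_h(0),\pi\rangle$. The result is
\[
\langle\psi_h(0),\pi\rangle=\int_0^T\!\!\int_\Omega (z_h-h\varphi)\,\xi_\pi\,dx\,dt .
\]

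Second, we eliminate the term $\int_Q z_h\xi_\pi$ by a second pairing. We test the equation for $\eta_\pi$ (which has right-hand side $\xi_\pi$) with $z_h$, and the equation for $z_h$ (which has right-hand side $-hy$) with $\eta_\pi$. The boundary terms from \eqref{eq:ibp-W} vanish because $z_h(0)=0$ and $\eta_\pi(T)=0$. This gives
\[
\int_Q z_h\,\xi_\pi=\int_Q(-h y)\,\eta_\pi .
\]
Substituting this into the first identity yields
\[
\langle\psi_h(0),\pi\rangle=-\int_0^T h(t)\int_\Omega(y\eta_\pi+\varphi\xi_\pi)\,dx\,dt,
\]
which is the claim.

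The only delicate point is the sign bookkeeping in the two subtractions, together with the justification that the products $h y\eta_\pi$ and $h\varphi\xi_\pi$ are integrable. Integrability follows from $h\in L^2(0,T)$ combined with the $L^\infty(0,T;L^2(\Omega))$ bounds noted above. With these in hand, Fubini's theorem allows us to move $h(t)$ outside the spatial integral.
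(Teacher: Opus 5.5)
Your proof is correct and takes the same approach as the paper, which only says the result follows from standard integration-by-parts computations; your two pairings supply exactly those details, and the signs check out. The same computation appears as the mixed identity in Step~1 of the proof of \cref{Boucon2} (take $v=0$ there).
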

\begin{proof}
    This follows from standard computations involving the integration by parts formula.
\end{proof}

\begin{lemma}\label{hstar}
    Let $(q,u)\in\mathcal C$ be given. Let $y,\varphi\in W(0,T)$ denote its associated state-adjoint pair. If $(h^*,v^*)\in N_{\mathcal C}(q,u)$, then there exists $\pi\in L^2(\Omega)$ such that
    \[
        h^*(t) = \int_\Omega \big[y(t,x)\eta_\pi(t,x)+\varphi(t,x)\xi_\pi(t,x)\big]\,dx
        \quad\text{for a.e.\ }t\in(0,T),
    \]
    where $\xi_\pi,\eta_\pi\in W(0,T)$ are given by \eqref{Btclinnalinor}.
\end{lemma}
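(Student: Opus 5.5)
The plan is to exploit the structure of $\Theta=T_{\mathcal C}(q,u)$ from \cref{p1} together with \cref{lem:Theta-surj}. Since for every $h\in L^2(0,T)$ there is a unique $v=:V(h)$ with $(h,v)\in\Theta$, the polar inequality reads
\[
    \langle h^*,h\rangle_{L^2(0,T)}+\langle v^*,V(h)\rangle_{L^2(\Omega)}\le 0\qquad\forall h\in L^2(0,T).
\]
The map $V$ is positively homogeneous, but it is not linear because of the contact set. The idea is to restrict attention to a linear subspace of directions on which $V$ is linear, and then to invoke $B^*$ from \cref{lemadjoint}.

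First I will single out the directions $h$ whose associated $v$ is unaffected by the contact set. The cleanest choice is the following. Let $P$ denote multiplication by the indicator of $\{|\varphi(0)|>\mu\}$. Consider the linear problem: find $v$ with $v=0$ off $\{|\varphi(0)|>\mu\}$ and $\lambda v=-P\,(B h+A_q v)$. This is uniquely solvable by Lax--Milgram, since $\lambda I+PA_qP$ is coercive on the range of $P$. For this reason $v=L h$ depends linearly and boundedly on $h$.

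In general $(h,Lh)\in\Theta$ only if $\psi(0)=Bh+A_qLh$ has the right sign on the contact set, that is, $\psi(0)\ge 0$ on $\{\varphi(0)=-\mu\}$ and $\psi(0)\le0$ on $\{\varphi(0)=\mu\}$. To avoid this difficulty I will use the cone property instead. Both $(h,V(h))$ and $(-h,V(-h))$ lie in $\Theta$, so adding the two polar inequalities gives
\[
    \langle v^*,V(h)+V(-h)\rangle\le0 .
\]
This alone does not isolate $h^*$, so I will proceed differently.

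I will define $\pi\in L^2(\Omega)$ through the Riesz representation after forming a candidate. Choose $\pi:=-w$, where $w$ solves the adjoint (transposed) problem
\[
    \lambda w+PA_qPw=Pv^*,\qquad w=0\ \text{off}\ \{|\varphi(0)|>\mu\}.
\]
The claim is then $h^*=-B^*\pi$ modulo terms coming from the contact set. On the subspace $H_0:=\{h:\ Bh+A_qLh=0\text{ a.e. on }\{|\varphi(0)|=\mu\}\}$ we have $V=L$, which is linear. Hence the polar inequality becomes an equality, giving $h^*+L^*v^*\in H_0^\perp$, and one computes $L^*v^*=B^*\pi$.

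The main obstacle is to get rid of the contact-set contribution, namely to show that $h^*+B^*\pi$ vanishes on all of $L^2(0,T)$ and not only on $H_0^\perp$. My first attempt will be to enlarge $\pi$ by a part supported on the contact set and to use the sign freedom there. On $\{\varphi(0)=\pm\mu\}$ the constraint $\psi(0)\gtrless0$ is one-sided. By taking $h$ for which $\psi(0)$ has the admissible sign and $V(h)=Lh$ holds, one obtains one-sided inequalities. A Farkas/Hahn--Banach separation in $L^2(0,T)$ then produces a nonnegative (respectively nonpositive) multiplier $\pi_c$ on the contact set with $h^*=-B^*(\pi+\pi_c)$.

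The step requiring care is the closedness of the image cone $\{(Bh+A_qLh)|_{\text{contact}}\}$, which is needed for the separation argument. If that closedness fails, I would fall back on a Lagrange duality argument for the convex problem $\max_h\langle h^*,h\rangle+\langle v^*,V(h)\rangle$ restricted to the convex cone of sign-admissible directions, and pass to a limit of approximate multipliers, using boundedness of $B$ from \cref{aliv} to control them.
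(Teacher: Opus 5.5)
Your proposal does not prove the lemma. The decisive step is the one you call the main obstacle, and it is left open. What you do establish is correct: on $H_0=\ker G$, with $Gh:=\mathbbm{1}_{\{|\varphi(0)|=\mu\}}(Bh+A_qLh)$, one has $V=L$, and $h^*+L^*v^*=h^*+B^*\pi$ annihilates $H_0$. To conclude $h^*=-B^*\tilde\pi$ you would need $(\ker G)^\perp=\operatorname{ran}G^*$. In general only $(\ker G)^\perp=\overline{\operatorname{ran}G^*}$ holds, and since $B$ and $A_q$ are smoothing (compact) parabolic solution operators, $\operatorname{ran}G^*$ is in general not closed. Your Farkas/separation step has the same closedness problem, now for the image of a cone.

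The fallback does not repair this. Boundedness of $B$ gives no bound on approximate multipliers $\sigma_n$ with $G^*\sigma_n\to h^*+B^*\pi$. For that you would need a lower estimate such as $\|G^*\sigma\|\ge c\|\sigma\|$, which is itself a constraint qualification. You are also aiming for sign information on the contact-set part of the multiplier. The lemma does not claim this, and the paper obtains it only under \cref{obsass} (\cref{Boucon}, via density of $\operatorname{ran}B$).

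The missing idea is an a priori bound that makes the nonlinearity of $h\mapsto V(h)$ irrelevant.
\begin{enumerate}
    \item For $(h,v)\in T_{\mathcal C}(q,u)$ with adjoint component $\psi$, a case-by-case check of the trace relation in \eqref{Btclin} gives $\psi(0)\,v=-\lambda v^2$ a.e.
    \item Combining this with $\psi(0)=A_qv+Bh$ and $\langle A_qv,v\rangle\ge 0$ yields $\|v\|_{L^2(\Omega)}\le\lambda^{-1}\|Bh\|_{L^2(\Omega)}$.
    \item Use the polar inequalities at $(h,V(h))$ and $(-h,V(-h))$ separately rather than adding them; adding cancels $h^*$, as you noticed. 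Each gives $\pm\langle h^*,h\rangle\le\|v^*\|\,\|V(\pm h)\|\le\lambda^{-1}\|v^*\|\,\|Bh\|$.
    \item Hence $|\langle h^*,h\rangle|\le\lambda^{-1}\|v^*\|\,\|Bh\|$ for all $h\in L^2(0,T)$.
    \item So $Bh\mapsto\langle h^*,h\rangle$ is a well-defined bounded linear functional on $\operatorname{ran}B$. Extend it by Hahn--Banach and represent it by Riesz as $-\langle\cdot,\pi\rangle$.
    \item This gives $h^*=-B^*\pi$, which is exactly the claimed formula by \cref{lemadjoint}.
\end{enumerate}
This is the paper's argument. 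It needs no closed-range or separation argument and no sign conditions.
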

\begin{proof}
    Let $B:L^2(0,T)\to L^2(\Omega)$ be the bounded linear map from \cref{aliv},
    defined by $Bh:=\psi_h(0)$, where $(z_h,\psi_h)$ solves \eqref{Btclinnali}.
    Let $A_q:L^2(\Omega)\to L^2(\Omega)$ be the bounded self-adjoint positive
    operator from \cref{al1}.

    Fix $(h,v)\in T_{\mathcal C}(q,u)$ and let $(z,\psi)\in W(0,T)\times W(0,T)$ be
    the associated pair in \cref{p1}, i.e.\ satisfying \eqref{Btclin}.
    By linearity of the PDEs in \eqref{Btclin}, writing $(z,\psi)=(z^{(v)},\psi^{(v)})+(z^{(h)},\psi^{(h)})$
    with $(z^{(v)},\psi^{(v)})$ solving \eqref{Btclinn} (i.e.\ $h=0$, $z(0)=v$) and
    $(z^{(h)},\psi^{(h)})$ solving \eqref{Btclinnali} (i.e.\ $z(0)=0$), we obtain
    \begin{equation}\label{eq:hstar:decomp}
        \psi(0)=\psi^{(v)}(0)+\psi^{(h)}(0)=A_q v + Bh.
    \end{equation}
    Moreover, the trace relation in \eqref{Btclin} implies pointwise a.e.\ in $\Omega$ that
    $\psi(0)\,v=-\lambda |v|^2$ (this is a direct case-by-case verification),
    and hence
    \begin{equation}\label{eq:hstar:key}
        \langle \psi(0),v\rangle_{L^2(\Omega)}=-\lambda\|v\|_{L^2(\Omega)}^2.
    \end{equation}
    Combining \eqref{eq:hstar:decomp} and \eqref{eq:hstar:key} yields $\langle A_q v,v\rangle_{L^2(\Omega)} + \langle Bh,v\rangle_{L^2(\Omega)}= -\lambda\|v\|_{L^2(\Omega)}^2$. Since $\langle A_q v,v\rangle_{L^2(\Omega)}\ge 0$ by \cref{al1} \ref{al1:i}, we obtain the estimate
    \begin{equation}\label{eq:hstar:vn-bound}
        \|v\|_{L^2(\Omega)} \le \frac{1}{\lambda}\,\|Bh\|_{L^2(\Omega)}.
    \end{equation}
    Now assume $(h^*,v^*)\in N_{\mathcal C}(q,u)$. Then
    \begin{equation}\label{eq:hstar:polar}
        \langle h^*,h\rangle_{L^2(0,T)} + \langle v^*,v\rangle_{L^2(\Omega)} \le 0
        \qquad \forall (h,v)\in T_{\mathcal C}(q,u).
    \end{equation}
    Fix $h\in L^2(0,T)$. Choose any $v$ such that $(h,v)\in T_{\mathcal C}(q,u)$ and any
    $\tilde v$ such that $(-h,\tilde v)\in T_{\mathcal C}(q,u)$ (existence follows from \cref{lem:Theta-surj}).
    Applying \eqref{eq:hstar:polar} to $(h,v)$ and to $(-h,\tilde v)$ and using
    Cauchy--Schwarz together with \eqref{eq:hstar:vn-bound} gives
    \[
        \langle h^*,h\rangle_{L^2(0,T)} \le \frac{\|v^*\|_{L^2(\Omega)}}{\lambda}\,\|Bh\|_{L^2(\Omega)}
        \quad\text{and}\quad
        -\langle h^*,h\rangle_{L^2(0,T)} \le \frac{\|v^*\|_{L^2(\Omega)}}{\lambda}\,\|Bh\|_{L^2(\Omega)}.
    \]
    Therefore,
    \begin{equation}\label{eq:hstar:factor}
        |\langle h^*,h\rangle_{L^2(0,T)}| \le \frac{\|v^*\|_{L^2(\Omega)}}{\lambda}\,\|Bh\|_{L^2(\Omega)}
        \qquad \forall h\in L^2(0,T).
    \end{equation}
    In particular, if $Bh=0$ then $\langle h^*,h\rangle_{L^2(0,T)}=0$.

    Define the linear functional $\ell$ on $\operatorname{ran}B\subseteq L^2(\Omega)$ by $\ell(Bh):=\langle h^*,h\rangle_{L^2(0,T)}$. This is well-defined, if $Bh_1=Bh_2$, then $B(h_1-h_2)=0$, hence $\langle h^*,h_1-h_2\rangle_{L^2(0,T)}=0$
    by the preceding observation. Moreover, \eqref{eq:hstar:factor} implies
    $|\ell(w)|\le \frac{\|v^*\|}{\lambda}\|w\|$ for all $w\in\operatorname{ran}B$, so $\ell$
    is continuous on $\operatorname{ran}B$. By Hahn--Banach, $\ell$ extends to a bounded
    linear functional on $L^2(\Omega)$, hence by the Riesz representation theorem there exists $\pi\in L^2(\Omega)$ such that $\ell(w)=-\langle w,\pi\rangle_{L^2(\Omega)}$ for all  $w\in L^2(\Omega)$. In particular, for every $h\in L^2(0,T)$,
    \[
        \langle h^*,h\rangle_{L^2(0,T)}=\ell(Bh)=\langle Bh,-\pi\rangle_{L^2(\Omega)}=-\langle h,B^*\pi\rangle_{L^2(0,T)},
    \]
    so $h^*=-B^*\pi$ in $L^2(0,T)$. The explicit representation of $B^*\pi$ is given by
    \cref{lemadjoint}, i.e.
    \[
        h^*(t)=-(B^*\pi)(t)=\int_\Omega\bigl(y(t,x)\,\eta_\pi(t,x)+\varphi(t,x)\,\xi_\pi(t,x)\bigr)\,dx
        \quad\text{for a.e.\ }t\in(0,T),
    \]
    where $(\xi_\pi,\eta_\pi)$ solves \eqref{Btclinnalinor}. This proves the claim.
\end{proof}

\begin{lemma}\label{lem:polar-characterization}
    Let $(q,u)\in\mathcal C$ be given, and let $(y,\varphi)\in W(0,T)\times W(0,T)$ be
    its associated state--adjoint pair.
    Then $(h^*,v^*)\in N_{\mathcal C}(q,u)$ if and only if there exists $\pi\in L^2(\Omega)$ such that
    \[
        h^*= \int_\Omega \big[y(\cdot,x)\eta(\cdot,x)+\varphi(\cdot,x)\xi(\cdot,x)\big]\,dx,
    \]
    and for every $(h,v)\in T_{\mathcal C}(q,u)$, writing $\psi\in W(0,T)$ for the adjoint component
    associated with $(h,v)$ through \eqref{Btclin}, one has
    \[
        \int_{\Omega}\Bigl[v^*(x)+\eta(0,x)+\lambda\,\pi(x)\Bigr]\,v(x)\,dx
        -\int_{\Omega}\Bigl[\psi(0,x)+\lambda\,v(x)\Bigr]\,\pi(x)\,dx \le 0,
    \]
    where $(\xi,\eta)\in W(0,T)\times W(0,T)$ is the unique solution of \eqref{Btclinnalinor}.
\end{lemma}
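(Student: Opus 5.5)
The plan is to prove the two implications separately, with \cref{hstar} supplying the representation of $h^*$ in the forward direction. The core of the argument is a duality identity. Given $\pi\in L^2(\Omega)$, let $(\xi,\eta)$ solve \eqref{Btclinnalinor}. Given $(h,v)\in T_{\mathcal C}(q,u)$, let $(z,\psi)$ be the pair from \cref{p1}. I would test the $z$-equation in \eqref{Btclin} with $\eta$ and the $\eta$-equation with $z$, then use \eqref{eq:ibp-W} together with $\eta(T)=0$ and $z(0)=v$. This should give
\[
    \int_\Omega \eta(0)\,v\,dx=\int_0^T\!\!\int_\Omega z\,\xi\,dx\,dt+\int_0^T h(t)\int_\Omega y\,\eta\,dx\,dt .
\]
Next I would test the $\psi$-equation with $\xi$ and the $\xi$-equation with $\psi$, using $\psi(T)=0$ and $\xi(0)=\pi$. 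This should give
\[
    \int_\Omega \psi(0)\,\pi\,dx=\int_0^T\!\!\int_\Omega z\,\xi\,dx\,dt-\int_0^T h(t)\int_\Omega\varphi\,\xi\,dx\,dt .
\]
Subtracting the two identities eliminates the common term $\int_Q z\xi$. Writing $H_\pi:=\int_\Omega(y\eta+\varphi\xi)\,dx$, I would obtain
\[
    \langle H_\pi,h\rangle_{L^2(0,T)}=\int_\Omega \eta(0)\,v\,dx-\int_\Omega\psi(0)\,\pi\,dx .
\]
The sign conventions must be checked carefully, especially the order of the two subtractions.

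\textbf{Forward direction.} Let $(h^*,v^*)\in N_{\mathcal C}(q,u)$. By \cref{hstar} there is a $\pi$ with $h^*=H_\pi$. Substituting the identity above into the polar inequality $\langle h^*,h\rangle+\langle v^*,v\rangle\le0$ gives
\[
    \int_\Omega(v^*+\eta(0))\,v\,dx-\int_\Omega\psi(0)\,\pi\,dx\le0 .
\]
This is exactly the claimed inequality: the added terms $\lambda\pi v$ and $-\lambda v\pi$ cancel, so they can be inserted freely. I would insert them because they produce the combination $\psi(0)+\lambda v$, which appears in the complementarity structure used later.

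\textbf{Converse direction.} Suppose $h^*=H_\pi$ for some $\pi$ and the inequality holds for every $(h,v)\in T_{\mathcal C}(q,u)$. Then the same identity, read backwards, turns the inequality into $\langle h^*,h\rangle+\langle v^*,v\rangle\le0$ for all tangent directions. Hence $(h^*,v^*)\in T_{\mathcal C}(q,u)^\circ=N_{\mathcal C}(q,u)$.

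\textbf{Main obstacle.} The only delicate point is the rigorous justification of the integration by parts. The products $h\,y\,\eta$ and $h\,\varphi\,\xi$ lie in $L^1(Q)$ because $h\in L^2(0,T)$ and $y,\eta,\varphi,\xi\in L^\infty(0,T;L^2(\Omega))$. The zero-order terms $q\,z\,\eta$ must cancel in each pairing, and they do: they appear with the same coefficient in both equations of the pair. One also needs $\psi$ to be exactly the component attached to $(h,v)$ through \eqref{Btclin}. This is unambiguous, because \cref{lem:unique-v} determines $v$ from $h$, and then $(z,\psi)$ is the unique solution of the linear system with that data.
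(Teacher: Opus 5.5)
Your proof is correct and follows the paper's skeleton. In the forward direction \cref{hstar} gives $h^*=H_\pi$. A single identity, $\langle H_\pi,h\rangle_{L^2(0,T)}=\langle\eta(0),v\rangle_{L^2(\Omega)}-\langle\psi(0),\pi\rangle_{L^2(\Omega)}$, then converts the polar inequality in both directions, and the vanishing term $\lambda\langle\pi,v\rangle-\lambda\langle v,\pi\rangle$ is inserted.

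The difference is in how you obtain that identity. You derive it directly by integration by parts, as the paper itself does later in Step~1 of the proof of \cref{Boucon2}, and your signs check out. The paper instead assembles it from the decomposition $\psi(0)=A_qv+Bh$, the self-adjointness of $A_q$ together with $\eta(0)=A_q\pi$, and $h^*=-B^*\pi$ via \cref{lemadjoint}.

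One small point: \cref{lem:unique-v} is not needed to pin down $\psi$. Once $(h,v)$ is given, $(z,\psi)$ is fixed by the well-posedness of the two linear equations alone.
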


\begin{proof}
    Let $A_q:L^2(\Omega)\to L^2(\Omega)$ be the operator from \cref{al1}, i.e.,
    $A_q v:=\psi^{(v)}(0)$ where $(z^{(v)},\psi^{(v)})$ solves \eqref{Btclinn}, and let
    $B:L^2(0,T)\to L^2(\Omega)$ be the operator from \cref{aliv}, i.e.,
    $Bh:=\psi^{(h)}(0)$ where $(z^{(h)},\psi^{(h)})$ solves \eqref{Btclinnali}.

    Fix $(h,v)\in T_{\mathcal C}(q,u)$ and let $(z,\psi)\in W(0,T)\times W(0,T)$ be the associated
    pair from \cref{p1}. By linearity of \eqref{Btclin} we may write
    \[
        (z,\psi)=(z^{(v)},\psi^{(v)})+(z^{(h)},\psi^{(h)}),
    \]
    where $(z^{(v)},\psi^{(v)})$ solves \eqref{Btclinn} (i.e.\ $h=0$, $z^{(v)}(0)=v$) and
    $(z^{(h)},\psi^{(h)})$ solves \eqref{Btclinnali} (i.e.\ $z^{(h)}(0)=0$). Taking traces at $t=0$
    yields
    \begin{equation}\label{eq:psi0-decomp}
        \psi(0)=A_q v + Bh.
    \end{equation}
    Given $\pi\in L^2(\Omega)$, let $(\xi,\eta)\in W(0,T)\times W(0,T)$ be the unique solution of
    \eqref{Btclinnalinor}. Since \eqref{Btclinnalinor} coincides with \eqref{Btclinn} with initial datum
    $v=\pi$, we have $ \eta(0)=A_q\pi$. By \cref{al1} \ref{al1:i}, $A_q$ is self-adjoint, hence
    \begin{equation}\label{eq:selfadj}
        \langle A_q v,\pi\rangle _{L^2(\Omega)}=\langle v,A_q\pi\rangle_{L^2(\Omega)}=\langle v,\eta(0)\rangle_{L^2(\Omega)}\quad v,\pi\in L^2(\Omega).
    \end{equation}
    We now divide the rest of the proof into two steps.
    \paragraph{Step 1 (necessity).}
    Assume $(h^*,v^*)\in N_{\mathcal C}(q,u)$. By \cref{hstar} there exists $\pi\in L^2(\Omega)$
    such that $h^*=-B^*\pi$, i.e.
    \[
        \langle h^*,h\rangle_{L^2(0,T)}=-\langle Bh,\pi\rangle_{L^2(\Omega)} \quad \forall h\in L^2(0,T).
    \]
    Fix $(h,v)\in T_{\mathcal C}(q,u)$ and let $\psi$ be its associated adjoint component. Polarity gives
    \[
        0\ge \langle h^*,h\rangle_{L^2(0,T)}+\langle v^*,v\rangle_{L^2(\Omega)}
        =-\langle Bh,\pi\rangle_{L^2(\Omega)}+\langle v^*,v\rangle_{L^2(\Omega)}.
    \]
    Using \eqref{eq:psi0-decomp} we write $Bh=\psi(0)-A_q v$, so
    \[
        0\ge -\langle \psi(0),\pi\rangle_{L^2(\Omega)}+\langle A_q v,\pi\rangle_{L^2(\Omega)}+\langle v^*,v\rangle_{L^2(\Omega)}.
    \]
    Invoking \eqref{eq:selfadj} yields $0\ge \langle v^*+\eta(0),v\rangle_{L^2(\Omega)}-\langle \psi(0),\pi\rangle_{L^2(\Omega)}$.
    Finally, since $\langle \pi,v\rangle_{L^2(\Omega)}=\langle v,\pi\rangle_{L^2(\Omega)}$, we may add the identically vanishing term
    $\lambda\langle \pi,v\rangle_{L^2(\Omega)}-\lambda\langle v,\pi\rangle_{L^2(\Omega)}=0$ to obtain
    \[
        \int_{\Omega}\bigl[v^*(x)+\eta(0,x)+\lambda\,\pi(x)\bigr]\,v(x)\,dx
        -\int_{\Omega}\bigl[\psi(0,x)+\lambda\,v(x)\bigr]\,\pi(x)\,dx \le 0.
    \]

    \paragraph{Step 2 (sufficiency).}
    Conversely, assume there exists \(\pi\in L^2(\Omega)\) satisfying the stated
    representation of \(h^*\) and the stated inequality. By \cref{lemadjoint}, the
    representation is equivalent to \(h^*=-B^*\pi\). Fix $(h,v)\in T_{\mathcal C}(q,u)$,
    and let $\psi$ be the associated adjoint component in \eqref{Btclin}. Using
    \(h^*=-B^*\pi\), \eqref{eq:psi0-decomp}, and \eqref{eq:selfadj}, we see that
    \[
        \begin{aligned}
            \langle h^*,h\rangle_{L^2(0,T)} + \langle v^*,v\rangle_{L^2(\Omega)}
            &= -\langle Bh,\pi\rangle_{L^2(\Omega)} + \langle v^*,v\rangle_{L^2(\Omega)} \\
            &= -\langle \psi(0)-A_q v,\pi\rangle_{L^2(\Omega)} + \langle v^*,v\rangle_{L^2(\Omega)} \\
            &= \langle v^*+\eta(0),v\rangle_{L^2(\Omega)} - \langle \psi(0),\pi\rangle_{L^2(\Omega)} .
        \end{aligned}
    \]
    On the other hand, the assumed inequality differs from the right-hand side only by
    $\lambda\langle \pi,v\rangle_{L^2(\Omega)}-\lambda\langle v,\pi\rangle_{L^2(\Omega)}=0$, hence it implies
    \[
        \langle h^*,h\rangle_{L^2(0,T)}+\langle v^*,v\rangle_{L^2(\Omega)}\le 0 \quad \forall (h,v)\in T_{\mathcal C}(q,u).
    \]
    Therefore $(h^*,v^*)\in N_{\mathcal C}(q,u)$. This completes the proof.
\end{proof}

\subsubsection{First inclusion}
Let $(h^*,v^*)\in N_{\mathcal C}(q,u)$ be given. By \cref{lem:polar-characterization}, there exists \(\pi\in L^2(\Omega)\) such that, if
\((\xi,\eta)\in W(0,T)\times W(0,T)\) is the corresponding solution of
\eqref{Btclinnalinor}, then
\[
    h^*(t)=\int_\Omega\bigl(y(t,x)\eta(t,x)+\varphi(t,x)\xi(t,x)\bigr)\,dx
    \quad\text{for a.e. }t\in(0,T),
\]
and, for every \((h,v)\in T_{\mathcal C}(q,u)\), with associated adjoint
component \(\psi\),
\[
    \langle v^*+\eta(0)+\lambda\pi,v\rangle_{L^2(\Omega)}
    -
    \langle \psi(0)+\lambda v,\pi\rangle_{L^2(\Omega)}
    \le 0.
\]
We set $\rho:=v^*+\eta(0)+\lambda\pi $. Then \(v^*=\rho-\lambda\pi-\eta(0)\), and it remains to prove the stated
pointwise conditions on \(\pi\) and \(\rho\).

Let $A_q:L^2(\Omega)\to L^2(\Omega)$ be the operator from \cref{al1}, i.e.,
$A_q v:=\psi^{(v)}(0)$ where $(z^{(v)},\psi^{(v)})$ solves \eqref{Btclinn}, and let
$B:L^2(0,T)\to L^2(\Omega)$ be the operator from \cref{aliv}, i.e.,
$Bh:=\psi^{(h)}(0)$ where $(z^{(h)},\psi^{(h)})$ solves \eqref{Btclinnali}. Consider also the closed convex cone
\[
    K:=\Bigl\{\,v\in L^2(\Omega):\ v=0 \ \text{a.e.\ on }\{|\varphi(0)|<\mu\},\ \
        v\ge 0 \ \text{a.e.\ on }\{\varphi(0)=-\mu\},\ \
    v\le 0 \ \text{a.e.\ on }\{\varphi(0)=\mu\}\Bigr\}.
\]
We will divide the proof in four steps.
\paragraph{Step 1 (some useful operators).}
Consider the operator $\mathbf{v}:L^2(\Omega)\to L^2(\Omega)$ given by $\mathbf{v}(g):=v_g$, where $v_g\in K$ is the unique solution of the variational inequality
\begin{equation}\label{eq:VI-vg}
    \big\langle g+(A_q+\lambda I)v_g,\, r-v_g\big\rangle_{L^2(\Omega)}\ge 0
    \quad \forall r\in K.
\end{equation}
The Lions--Stampacchia theorem and its proof \cite[Theorem 2.1]{LS_1967} ensures that the operator $\mathbf{v}:L^2(\Omega)\to L^2(\Omega)$ is well-defined and Lipschitz continuous.
Define now the operator $\mathbf{s}:L^2(\Omega)\to L^2(\Omega)$ given by $\mathbf s(g):=g + (A_q +\lambda I)\mathbf v(g)$. Simple estimates show that  $\mathbf{s}:L^2(\Omega)\to L^2(\Omega)$ is also Lipschitz continuous.

\paragraph{Step 2 (controllability and observability issues).}
By \cref{obsass} and \cref{lemadjoint}, we obtain that the mapping $B^*:L^2(\Omega)\to L^2(0,T)$ is injective, in particular $\ker B^*=\{0\}$. This implies
\[
    \overline{\operatorname{ran} B} = \big(\operatorname{ker} B^*\big)^\perp = \{0\}^\perp = L^2(\Omega).
\]
We now use this density to extend the polar inequality to arbitrary data
\(g\in L^2(\Omega)\). Let first \(g\in \operatorname{ran} B\), and choose
\(h\in L^2(0,T)\) such that \(g=Bh\). By definition of \(\mathbf v(g)\),
the element \(v:=\mathbf v(g)\) solves
\[
    \big\langle g+(A_q+\lambda I)v,\,r-v\big\rangle_{L^2(\Omega)}\ge 0
    \quad \forall r\in K .
\]
Since \(g=Bh\), the linearized state--adjoint system corresponding to the pair
\((h,v)\) has initial adjoint trace
\[
    \psi(0)=A_qv+Bh=A_qv+g .
\]
Consequently, $\psi(0)+\lambda v
=
g+(A_q+\lambda I)v
=
\mathbf s(g)$. Thus \((h,\mathbf v(g))\) is an admissible tangent direction, and
\(\cref{lem:polar-characterization}\) yields the variational inequality
\[
    \langle \rho,\mathbf v(g)\rangle_{L^2(\Omega)}
    -
    \langle \mathbf s(g),\pi\rangle_{L^2(\Omega)}
    \le 0
    \quad \forall g\in \operatorname{ran}B .
\]
Finally, \(\overline{\operatorname{ran}B}=L^2(\Omega)\), while the mappings
\(\mathbf v\) and \(\mathbf s\) are Lipschitz continuous. Hence the map
\[
    g\longmapsto
    \langle \rho,\mathbf v(g)\rangle_{L^2(\Omega)}
    -
    \langle \mathbf s(g),\pi\rangle_{L^2(\Omega)}
\]
is continuous on \(L^2(\Omega)\). Passing to the limit along a sequence
\(g_n\in\operatorname{ran}B\) with \(g_n\to g\) gives
\begin{equation}\label{eq:pi-rho-ineq-allg}
    \langle \rho,\mathbf v(g)\rangle_{L^2(\Omega)}
    -
    \langle \mathbf s(g),\pi\rangle_{L^2(\Omega)}
    \le 0
    \quad \forall g\in L^2(\Omega).
\end{equation}
\paragraph{Step 3 (relations for $\pi$).} Let $w\in L^2(\Omega)$ satisfy
$w=0$ a.e.\ on $\{|\varphi(0)|\ge\mu\}$. Further, for every $r\in K$, we have
$r=0$ a.e.\ on $\{|\varphi(0)|<\mu\}$, and therefore
\[
    \langle w+(A_q+\lambda I)\cdot 0,\ r-0\rangle_{L^2(\Omega)}=\langle w,r\rangle_{L^2(\Omega)}=0.
\]
Hence $v=0$ satisfies \eqref{eq:VI-vg} with datum $g=w$; by uniqueness of the
variational inequality solution, $\mathbf v(w)=0$, and therefore $\mathbf s(w)=w$. Inserting
$g=w$ into \eqref{eq:pi-rho-ineq-allg} gives $-\langle w,\pi\rangle_{L^2(\Omega)}\le 0$.
Repeating the same argument with $-w$ yields $\langle w,\pi\rangle_{L^2(\Omega)}\le 0$, hence
$\langle w,\pi\rangle_{L^2(\Omega)}=0$ for all such $w$, which implies $\pi=0$ a.e.\ on
$\{|\varphi(0)|<\mu\}$. Next, let $w\in L^2(\Omega)$ satisfy $w\ge 0$ a.e.\ and $w=0$ a.e.\ on
$\{\varphi(0)\neq -\mu\}$. Further, for every $r\in K$ we have $r\ge 0$ a.e.\ on
$\{\varphi(0)=-\mu\}$, and therefore $\langle w,r\rangle_{L^2(\Omega)}\ge 0$. As above, this
implies $\mathbf{v}(w)=0$ and $\mathbf{s}(w)=w$, so \eqref{eq:pi-rho-ineq-allg} yields
$\langle w,\pi\rangle_{L^2(\Omega)}\ge 0$ for all such $w$, which is equivalent to
$\pi\ge 0$ a.e.\ on $\{\varphi(0)=-\mu\}$. Similarly, if $w\le 0$ a.e.\ and
$w=0$ a.e.\ on $\{\varphi(0)\neq \mu\}$, then $r\le 0$ a.e.\ on $\{\varphi(0)=\mu\}$
for all $r\in K$, hence $\langle w,r\rangle_{L^2(\Omega)}\ge 0$, and the same argument shows
$\langle w,\pi\rangle_{L^2(\Omega)}\ge 0$ for all such $w$, which is equivalent to
$\pi\le 0$ a.e.\ on $\{\varphi(0)=\mu\}$.

\paragraph{Step 4 (relations for $\rho$).}
Let $\bar v\in L^2(\Omega)$ satisfy $\bar v=0$ a.e.\ on
$\{|\varphi(0)|\le \mu\}$. Then $\bar v\in K$. Define $g:=-(A_q+\lambda I)\bar v$.
With this choice, $g+(A_q+\lambda I)\bar v=0$, and hence $\bar v$ satisfies
\eqref{eq:VI-vg} with datum $g$ (with equality). By uniqueness, $\mathbf{v}(g)=\bar v$ and
$\mathbf{s}(g)=0$. Inserting this $g$ into \eqref{eq:pi-rho-ineq-allg} yields
$\langle \rho,\bar v\rangle_{L^2(\Omega)}\le 0$. Repeating the argument with $-\bar v$ gives
$\langle \rho,\bar v\rangle_{L^2(\Omega)}\ge 0$, hence $\langle \rho,\bar v\rangle_{L^2(\Omega)}=0$ for all
$\bar v$ supported in $\{|\varphi(0)|>\mu\}$, which implies $\rho=0$ a.e.\ on
$\{|\varphi(0)|>\mu\}$.
Let $\bar v\in L^2(\Omega)$ satisfy $\bar v\ge 0$ a.e.\ and
$\bar v=0$ a.e.\ on $\{\varphi(0)\neq -\mu\}$. Then $\bar v\in K$, and the same
choice $g:=-(A_q+\lambda I)\bar v$ yields $\mathbf{v}(g)=\bar v$ and $\mathbf{s}(g)=0$, so
\eqref{eq:pi-rho-ineq-allg} gives $\langle \rho,\bar v\rangle_{L^2(\Omega)}\le 0$ for all such
$\bar v$, which is equivalent to $\rho\le 0$ a.e.\ on $\{\varphi(0)=-\mu\}$.
Likewise, if $\bar v\le 0$ a.e.\ and $\bar v=0$ a.e.\ on $\{\varphi(0)\neq \mu\}$,
the same argument yields $\langle \rho,\bar v\rangle_{L^2(\Omega)}\le 0$ for all such $\bar v$,
which is equivalent to $\rho\ge 0$ a.e.\ on $\{\varphi(0)=\mu\}$.

\subsubsection{Second inclusion}
Suppose there exist $\xi,\eta\in W(0,T)$ and $\pi,\rho\in L^2(\Omega)$ satisfying
\[
    \left\{
        \begin{aligned}
            \partial_t\xi-\Delta\xi+q(t)\,\xi &= 0 && \text{in }Q,\\
            \xi &= 0 && \text{on }\Sigma,\\
            \xi(0) &= \pi && \text{in }\Omega,
        \end{aligned}
    \right.
    \qquad
    \left\{
        \begin{aligned}
            -\partial_t\eta-\Delta\eta+q(t)\,\eta &= \xi && \text{in }Q,\\
            \eta &= 0 && \text{on }\Sigma,\\
            \eta(T) &= 0 && \text{in }\Omega,
        \end{aligned}
    \right.
\]
such that
\[
    v^*=\rho-\lambda\,\pi-\eta(0)
    \quad\text{and}\quad
    h^*(t)=\int_\Omega \bigl(y(t,x)\,\eta(t,x)+\varphi(t,x)\,\xi(t,x)\bigr)\,dx,
\]
where $\pi$ and $\rho$ satisfy
\[
    \pi=0 \ \text{a.e.\ on } \{|\varphi(0)|<\mu\},\qquad
    \pi\ge 0 \ \text{a.e.\ on } \{\varphi(0)=-\mu\},\qquad
    \pi\le 0 \ \text{a.e.\ on } \{\varphi(0)=\mu\},
\]
\[
    \rho=0 \ \text{a.e.\ on } \{|\varphi(0)|>\mu\},\qquad
    \rho\le 0 \ \text{a.e.\ on } \{\varphi(0)=-\mu\},\qquad
    \rho\ge 0 \ \text{a.e.\ on } \{\varphi(0)=\mu\}.
\]
We prove below that $(h^*,v^*)$ belongs to ${N}_{\mathcal C}(q,u)$.

Let $(h,v)\in T_{\mathcal C}(q,u)$ be arbitrary and let $(z,\psi)\in W(0,T)\times W(0,T)$
be associated with $(h,v)$ through \eqref{Btclin}.
Recall the operators $A_q:L^2(\Omega)\to L^2(\Omega)$ and $B:L^2(0,T)\to L^2(\Omega)$
from \cref{al1,aliv}; in particular, $\psi(0)=A_q v + Bh$. Since $(\xi,\eta)$ solves \eqref{Btclinnalinor} with initial datum $\pi$, \cref{lemadjoint} yields
$h^*=-B^*\pi$, i.e., $\langle h^*,h\rangle_{L^2(0,T)}=-\langle Bh,\pi\rangle_{L^2(\Omega)}$.
Using that $\psi(0)=A_q v + Bh$, we obtain
\[
    \langle h^*,h\rangle_{L^2(0,T)}=-\langle \psi(0)-A_q v,\pi\rangle_{L^2(\Omega)}=-\langle \psi(0),\pi\rangle_{L^2(\Omega)}+\langle A_q v,\pi\rangle_{L^2(\Omega)}.
\]
Since $\eta(0)=A_q\pi$ and $A_q$ is self-adjoint (\cref{al1}),
\[
    \langle A_q v,\pi\rangle_{L^2(\Omega)}=\langle v,A_q\pi\rangle_{L^2(\Omega)}=\langle v,\eta(0)\rangle_{L^2(\Omega)},
\]
and thus
\[
    \langle h^*,h\rangle_{L^2(0,T)}+\langle v^*,v\rangle_{L^2(\Omega)}=\langle v^*+\eta(0),v\rangle_{L^2(\Omega)}-\langle \psi(0),\pi\rangle_{L^2(\Omega)}.
\]
Therefore, we have the identity
\begin{equation}\label{eq:second-inc-key}
    \langle h^*,h\rangle_{L^2(0,T)}+\langle v^*,v\rangle_{L^2(\Omega)}=\langle \rho,v\rangle_{L^2(\Omega)}-\langle \psi(0)+\lambda v,\pi\rangle_{L^2(\Omega)}.
\end{equation}
It remains to show that the right-hand side of \eqref{eq:second-inc-key} is $\le 0$.
This follows from \eqref{Btclin} and the sign conditions on $\pi,\rho$, pointwise a.e.\ in $\Omega$.
On $\{|\varphi(0)|<\mu\}$ we have $v=0$ by \eqref{Btclin} and $\pi=0$ by assumption, hence
$\rho v-(\psi(0)+\lambda v)\pi=0$ there. On $\{|\varphi(0)|>\mu\}$ we have
$v=-\psi(0)/\lambda$ by \eqref{Btclin}, so $\psi(0)+\lambda v=0$, and also $\rho=0$ by assumption; hence the integrand vanishes there as well. On $\{\varphi(0)=-\mu\}$, \eqref{Btclin} gives
\[
    v=-\frac1\lambda \min\{\psi(0),0\}\ge 0\quad\text{and}\quad
    \psi(0)+\lambda v=\max\{\psi(0),0\}\ge 0.
\]
Since $\rho\le 0$ and $\pi\ge 0$ a.e.\ on $\{\varphi(0)=-\mu\}$, it follows that
$\rho v\le 0$ and $-(\psi(0)+\lambda v)\pi\le 0$, hence
$\rho v-(\psi(0)+\lambda v)\pi\le 0$ there. On $\{\varphi(0)=\mu\}$, \eqref{Btclin} gives
\[
    v=-\frac1\lambda \max\{\psi(0),0\}\le 0,
    \quad\text{and}\quad
    \psi(0)+\lambda v=\min\{\psi(0),0\}\le 0,
\]
and since $\rho\ge 0$ and $\pi\le 0$ a.e.\ on $\{\varphi(0)=\mu\}$, we again have
$\rho v\le 0$ and $-(\psi(0)+\lambda v)\pi\le 0$, hence the integrand is $\le 0$ there. Consequently, $\rho v-(\psi(0)+\lambda v)\pi\le 0$ a.e.\ in $\Omega$, and integrating yields $\langle \rho,v\rangle_{L^2(\Omega)}-\langle \psi(0)+\lambda v,\pi\rangle_{L^2(\Omega)}\le 0$. By \eqref{eq:second-inc-key}, this shows that $(h^*,v^*)\in N_{\mathcal C}(q,u)$.

\subsection{Proof of Theorem \ref{thmfailobs}}
We prove the claim by constructing a situation in which the observability
condition fails and the normal-cone representation from \cref{Boucon} cannot
hold. The construction produces nontrivial elements in the kernel of the
observability operator and exploits the resulting symmetry of the tangent
cone to identify an element of the polar cone. We then show that this polar
element cannot be represented by any adjoint multiplier system satisfying the
conditions in \cref{Boucon}.

\subsubsection{A technical lemma}
We first prove a technical lemma that isolates the main obstruction caused by
the failure of the observability condition.

\begin{figure}[t]
    \centering
    \includegraphics[width=\linewidth]{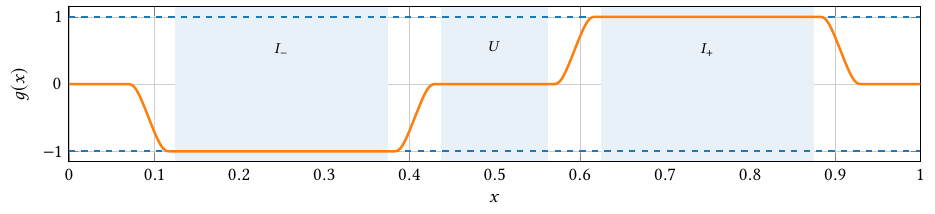}
    \caption{Numerical illustration of the construction in \cref{conlem}: A function $g$ with plateaus on $I_-$ and $I_+$ at $\pm\mu = \pm 1$ and on $U=[7/16,9/16]$ at $0$.}\label{fig:conlem:g}
\end{figure}
\begin{lemma}\label{conlem}
    Let $\mu>0$ and set
    \[
        I_-:=\Bigl[\frac18,\frac38\Bigr],\qquad
        I_+:=\Bigl[\frac58,\frac78\Bigr],\qquad
        J:=(0,1)\setminus (I_-\cup I_+).
    \]
    Consider the sequence $(e_k)_{k\in\mathbb N}$ of functions $e_k:(0,1)\to\mathbb R$ defined by $e_k(x):=\sqrt{2}\sin(k\pi x)$.
    There exists a function $g\in H^2(0,1)\cap H_0^1(0,1)$ such that
    \begin{enumerate}[label=(\roman*)]
        \item $g(1-x)=-g(x)$ for all $x\in(0,1)$;
        \item $g(x)=-\mu$ for all $x\in I_-$ and $g(x)=\mu$ for all $x\in I_+$;
        \item $|g(x)|<\mu$ for all $x\in J$;
        \item $\langle g,e_{2k-1}\rangle_{L^2(0,1)}=0$ and $\langle g,e_{2k}\rangle_{L^2(0,1)}\neq0$ for all $k\in\mathbb N$.
    \end{enumerate}
\end{lemma}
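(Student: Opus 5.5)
The plan is to build a simple base profile $g_0$ with properties (i)--(iii), and then obtain (iv) by a small generic perturbation supported inside $J$. The key observation is that (i) already forces half of (iv). Indeed, $\sin((2k-1)\pi(1-x))=\sin((2k-1)\pi x)$, so $e_{2k-1}$ is even with respect to $x=1/2$. Likewise $\sin(2k\pi(1-x))=-\sin(2k\pi x)$, so $e_{2k}$ is odd with respect to $x=1/2$. Hence any $g$ satisfying $g(1-x)=-g(x)$ is orthogonal to every $e_{2k-1}$. The only real content of (iv) is therefore the non-vanishing of the even coefficients $\langle g,e_{2k}\rangle$.

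\emph{Step 1 (base profile).} Fix a $C^\infty$ nondecreasing transition function $\theta:\mathbb R\to[0,1]$ with $\theta=0$ on $(-\infty,0]$, $\theta=1$ on $[1,\infty)$ and $0<\theta<1$ on $(0,1)$. On $[0,1/2]$ define $g_0$ piecewise:
\begin{itemize}
  \item on $[0,1/8]$, $g_0$ goes strictly monotonically from $0$ to $-\mu$; take $g_0(x)=-\mu\,\theta_1(8x)$ with $\theta_1$ a transition satisfying $\theta_1(s)>0$ for $s>0$, so that $g_0(0)=0$;
  \item on $I_-$, set $g_0\equiv-\mu$;
  \item on $[3/8,7/16]$, $g_0$ returns from $-\mu$ to $0$ through a rescaled $\theta$;
  \item on $U=[7/16,1/2]$, set $g_0\equiv0$.
\end{itemize}
Extend to $[1/2,1]$ by $g_0(x):=-g_0(1-x)$. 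All junctions are $C^\infty$ because $\theta$ is flat at its endpoints, so $g_0\in C^\infty([0,1])\subseteq H^2(0,1)$, and $g_0(0)=g_0(1)=0$. Properties (i)--(iii) hold by construction, since the transitions take values strictly between $-\mu$ and $\mu$ on the open parts of $J$.

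\emph{Step 2 (genericity).} Choose a compact interval $K\subset\operatorname{int}U\setminus\{1/2\}$, say $K\subset(7/16,1/2)$, together with its mirror image $K'=1-K$. On $K\cup K'$ we have $g_0=0$, so the margin $\mu-|g_0|$ equals $\mu$ there. Let
\[
X:=\{w\in C^2_c(\operatorname{int}(K\cup K')):\ w(1-x)=-w(x)\},
\]
equipped with the $C^2$ norm. This is a Banach space once we take the closure of the smooth functions in $C^2$ with the support constraint, i.e.\ we work with $C^2_0$-type functions vanishing with derivatives on the boundary. For each $k$, the map $L_k(w):=\langle g_0+w,e_{2k}\rangle$ is a continuous affine functional on $X$. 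Its linear part $w\mapsto\langle w,e_{2k}\rangle$ is nonzero: $e_{2k}$ is real-analytic and nontrivial, so it does not vanish identically on $K$, and we can pick $w$ bumped on $K$ and antisymmetrically reflected to $K'$. Since the integrand is even about $1/2$, this gives $\langle w,e_{2k}\rangle=2\int_K w\,e_{2k}\neq0$. Consequently $Z_k:=L_k^{-1}(0)$ is a closed affine hyperplane, hence nowhere dense in $X$.

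By the Baire category theorem, the ball $\{\|w\|_X<\mu/2\}$ is not contained in $\bigcup_k Z_k$. Pick $w$ in this ball outside every $Z_k$ and set $g:=g_0+w$. Then:
\begin{itemize}
  \item (i) holds because both $g_0$ and $w$ are odd about $1/2$;
  \item (ii) holds because $w$ vanishes on $I_\pm$;
  \item (iii) holds because on $\operatorname{supp}w$ we have $|g|\le\|w\|_\infty<\mu$, while elsewhere on $J$ we have $g=g_0$;
  \item $g\in H^2\cap H^1_0$;
  \item (iv) follows from the parity remark together with $w\notin Z_k$ for all $k$.
\end{itemize}

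\emph{Main obstacle.} The main obstacle is Step 2, namely making all countably many even Fourier coefficients nonzero simultaneously while preserving the plateau and strict-bound constraints. Localizing the perturbation to the zero plateau $U$, where there is a uniform margin $\mu$, is what makes the constraints robust under small perturbations. The Baire argument then handles the countable family of conditions at once. An explicit alternative would be a convergent series $w=\sum_k\epsilon_k w_k$ with rapidly decreasing $\epsilon_k$ chosen inductively, but this is just the Baire argument made concrete.
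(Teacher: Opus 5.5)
Your proof is correct and has the same overall structure as the paper's. Both build a smooth midpoint-odd base profile $g_0$ with the required plateaus on $I_\pm$. Both then add a small midpoint-odd perturbation supported near $x=1/2$, where $|g_0|$ has a uniform margin, and both get the vanishing of the odd coefficients from the symmetry. The genuine difference is in how all even coefficients are made nonzero simultaneously.

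The paper proves that the functionals $L_k(r)=\langle r,e_{2k}\rangle$ are \emph{linearly independent} on the space of smooth midpoint-odd functions supported in $U$. This uses real-analyticity of finite sine sums together with the reflection trick. It then picks a triangular family $r_k$ with $\langle r_k,e_{2k}\rangle=1$ and $\langle r_k,e_{2\ell}\rangle=0$ for $\ell<k$. Finally it builds $g=g_0+\sum_k\varepsilon_k r_k$, choosing each $\varepsilon_k$ recursively to be small in $H^2$ and $L^\infty$ and to avoid the single bad value $-c_k$. Triangularity ensures that later terms never disturb earlier coefficients.

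Your Baire argument needs only that each $L_k$ is individually nonzero on $X$. That is immediate, since $e_{2k}$ does not vanish identically on $K$. Baire then disposes of all countably many conditions at once, and you even get the stronger statement that a residual set of small odd perturbations works.

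The paper's route gives an explicit, constructive $g$ without any appeal to completeness. Yours is shorter, and it is exactly the device the paper itself uses later in \cref{lem:yate}.

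One point to tidy up: $C^2_c(\operatorname{int}(K\cup K'))$ is not complete, as you note. Define $X$ directly as the closed subspace of $C^2([0,1])$ (or of $H^2(0,1)$) consisting of midpoint-odd functions supported in $K\cup K'$. With that definition every step of your verification goes through unchanged.
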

\begin{proof}
    Choose a function $\theta\in C^\infty([0,\frac12])$  such that
    \[
        0\le \theta\le 1,\qquad
        \theta=1 \text{ on } \Bigl[\frac18,\frac38\Bigr],\qquad
        \theta<1 \text{ on } \Bigl[0,\frac12\Bigr]\setminus \Bigl[\frac18,\frac38\Bigr],
        \qquad
        \theta=0 \text{ in a neighborhood of } \Bigl\{0,\frac12\Bigr\}.
    \]
    Define $g_0\in C^\infty([0,1])$ by
    \[
        g_0(x):=
        \begin{cases}
            -\mu\,\theta(x)& \text{if } x\in[0,\frac12],\\
            \mu \theta(1-x)& \text{if } x\in[\frac12,1].
        \end{cases}
    \]
    Observe that $g_0(0)=g_0(1)=0$, and hence $g_0\in H_0^1(0,1)\cap H^2(0,1)$.
    By construction,
    \[
        g_0(1-x)=-g_0(x)\qquad \forall x\in(0,1),
    \]
    therefore $g_0=-\mu$ on $I_-$ and $g_0=\mu$ on $I_+$.
    Moreover, $|g_0|<\mu$ on $J$. Choose now a nonempty open interval $U:=(1/2-\eta,1/2+\eta)$ so that $U\subseteq J$.
    Since $g_0$ is continuous and $|g_0|<\mu$ on the compact set $\overline U$,
    there exists $\delta>0$ such that $|g_0(x)|\le \mu-2\delta$ for all $x\in U$.
    Set
    \[
        X:=\Bigl\{r\in C^\infty(0,1): \overline{\operatorname{supp}}\,r\subseteq U\,\,\,\, \text{and}\,\,\,\,r(1-x)=-r(x)\ \forall x\in(0,1)\Bigr\}.
    \]
    For each $k\in\mathbb N$, define the linear functional $L_k:X\to\mathbb R$ by $L_k(r):=\langle r,e_{2k}\rangle_{L^2(0,1)}$.
    We claim that the family $(L_k)_{k\in\mathbb N}$ is linearly independent on $X$; let $m\in\mathbb N$ and suppose that
    \[
        \sum_{k=1}^m a_k L_k(\varphi)=0 \quad\forall \varphi\in X
    \]
    for some scalars $a_1,\dots,a_m$. Consider $F:(0,1)\to\mathbb R$ given by $F(x):=\sum_{k=1}^m a_k e_{2k}(x)$.  Then $ \int_0^1 r(x)F(x)\,dx=0$ for all $r\in X$.
    Since each $e_{2k}$ is midpoint-odd, so is $F$. Let $\varphi\in C^\infty(0,1)$ with $\overline{\operatorname{supp}}\hspace{0.04cm}\varphi\subseteq U\cap(0,\tfrac12)$ be
    arbitrary, and define $r_{\varphi}:(0,1)\to\mathbb R$ by
    \[
        r_\varphi(x):=\varphi(x)-\varphi(1-x).
    \]
    Then $r_\varphi\in X$. Using a change of variables and the midpoint-oddness of $F$, we get
    \[
        \int_0^1 r_\varphi(x)F(x)\,dx
        =
        \int_0^1 \varphi(x)F(x)\,dx-\int_0^1 \varphi(1-x)F(x)\,dx
        =
        2\int_0^1 \varphi(x)F(x)\,dx.
    \]
    Since $r_\varphi\in X$, the left-hand side is zero. Hence
    \[
        \int_0^1 \varphi(x)F(x)\,dx=0
        \quad  \text{for all $\varphi\in C^\infty(0,1)$ with $\overline{\operatorname{supp}}\hspace{0.04cm}\varphi\subseteq U\cap(0,\tfrac12)$}.
    \]
    Therefore $F=0$ almost everywhere on the nonempty open set
    $U\cap(0,\tfrac12)$; but $F$ is real analytic on $(0,1)$, being a finite linear
    combination of sines, and so $F\equiv0$ on $(0,1)$. Since
    $\{e_n\}_{n\ge1}$ is an orthonormal basis of  $L^2(0,1)$, it follows that $a_k=0$ for
    all $k=1,\dots,m$. This proves the claim.

    Due to the linear independence of $(L_k)_{k\in\mathbb N}$, for each $k\in\mathbb N$ we can choose $r_k\in X$ such that
    \[
        \langle r_k,e_{2k}\rangle_{L^2(0,1)}=1\quad\text{and}
        \quad
        \langle r_k,e_{2\ell}\rangle_{L^2(0,1)}=0 \quad \forall \ell\in\{1,\dots,k-1\}.
    \]
    We now choose coefficients $\varepsilon_k\in\mathbb R$ recursively. Suppose
    that $\varepsilon_1,\dots,\varepsilon_{k-1}$ have already been chosen. Set $c_k:=\Big\langle g_0+\sum_{j=1}^{k-1}\varepsilon_j r_j,\ e_{2k}\Big\rangle_{L^2(0,1)}$.  Since $\langle r_k,e_{2k}\rangle_{L^2(0,1)}=1$, we have
    \[
        \Bigl\langle g_0+\sum_{j=1}^{k-1}\varepsilon_j r_j+\varepsilon_k r_k,\ e_{2k}\Bigr\rangle_{L^2(0,1)}
        =c_k+\varepsilon_k.
    \]
    We choose $\varepsilon_k$ such that $\varepsilon_k\neq -c_k$
    and, simultaneously,
    \[
        |\varepsilon_k|\,\|r_k\|_{H^2(0,1)}\le 2^{-k}\quad\text{and}\quad
        |\varepsilon_k|\,\|r_k\|_{L^\infty(0,1)}\le \delta\,2^{-k}.
    \]
    Define $g:=g_0+\sum_{k=1}^\infty \varepsilon_k r_k$; see \cref{fig:conlem:g} for an illustration of this function.
    By construction, the series converges absolutely in $H^2(0,1)$. Since each partial sum
    belongs to $H_0^1(0,1)$, and $H_0^1(0,1)$ is closed in $H^1(0,1)$, we obtain $g\in H^2(0,1)\cap H_0^1(0,1)$.
    Moreover, as every $r_k$ is midpoint-odd, so is $g$. Since each $r_k$ is supported in $U$, we have $r_k=0$ on
    $I_-\cup I_+$. Therefore
    \[
        g=g_0=-\mu \,\text{ on }I_-\quad\text{and}\quad
        g=g_0=\mu \,\text{ on }I_+.
    \]
    Also, on $J\setminus U$ one has $g=g_0$, and so $|g|<\mu$ there. On the other
    hand, for $x\in U$,
    \[
        |g(x)-g_0(x)|
        \le \sum_{k=1}^\infty |\varepsilon_k|\,\|r_k\|_{L^\infty(0,1)}
        \le \delta,
    \]
    hence $|g(x)|
    \le |g_0(x)|+\delta
    \le (\mu-2\delta)+\delta
    =\mu-\delta<\mu$.
    Thus $|g|<\mu$ on $J$.  Let now $m\in\mathbb N$ be fixed. By
    construction, $\langle r_k,e_{2m}\rangle_{L^2(0,1)}=0$ for $k>m$. The defining property of $r_k$ gives $\langle g,e_{2m}\rangle_{L^2(0,1)}
    =
    \Bigl\langle g_0+\sum_{k=1}^m \varepsilon_k r_k,\ e_{2m}\Bigr\rangle_{L^2(0,1)}$. But when $\varepsilon_m$ was chosen, it was ensured that $\Big\langle g_0+\sum_{k=1}^{m-1} \varepsilon_k r_k+\varepsilon_m r_m,\ e_{2m}\Big\rangle_{L^2(0,1)}
    \neq 0$. Hence $\langle g,e_{2m}\rangle_{L^2(0,1)}\neq0$. Since each $e_{2m-1}$ is midpoint-even, using the change of variables formula yields
    \[
        \langle g,e_{2m-1}\rangle_{L^2(0,1)}
        =\int_0^1 g(x)e_{2m-1}(x)\,dx
        =\int_0^1 g(1-x)e_{2m-1}(1-x)\,dx
        =-\int_0^1 g(x)e_{2m-1}(x)\,dx.
    \]
    Hence \(\langle g,e_{2m-1}\rangle_{L^2(0,1)}=0\). This completes the proof.
\end{proof}

\subsubsection{The construction}\label{sec:conlem:construct}
\begin{figure}[t]
    \centering
    \includegraphics[width=\linewidth]{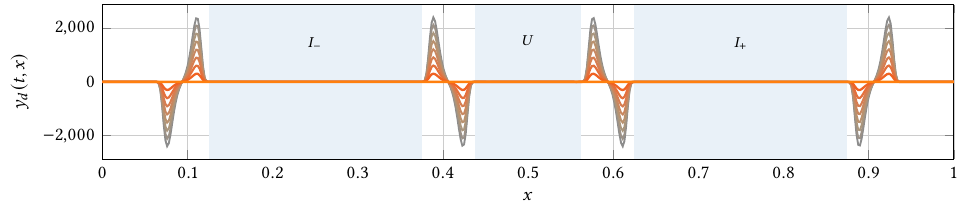}
    \caption{Numerical illustration of the construction in \cref{sec:conlem:construct}: Time snapshots of $y_d(t,x)$ from early (dark gray) to late (orange).}\label{fig:conlem:yd}
\end{figure}
Let $g\in H^2(0,1)\cap H_0^1(0,1)$ be a function satisfying the properties of \cref{conlem}. Set $\mu>0$ and
\[
    \Omega=(0,1),\qquad T=1,\qquad \lambda=1,\qquad y_d(t)=(1-t)g''-g,
\]
see \cref{fig:conlem:yd}.
Consider the point $(q,u)\equiv(0,0)$ in $L^2(0,T)\times L^2(\Omega)$. It is straightforward to verify that $(q,u)\in\mathcal C$,  and that the associated state-adjoint pair satisfies
\[
    y(t,x) = 0\quad\text{and}\quad \varphi(t,x) = (1-t) g(x)\quad \forall (t,x)\in[0,T]\times\Omega.
\]
In particular, we see that
\[
    \{\varphi(0)=-\mu\}=\Bigl[\frac18,\frac38\Bigr]
    \quad\text{and}\quad
    \{\varphi(0)=\mu\}=\Bigl[\frac58,\frac78\Bigr].
\]
Let $(h^*,v^*) \in L^2(0,T) \times L^2(\Omega)$ be given by $h^* \equiv 0$ and $v^* \equiv 1$. We will prove below that $(h^*,v^*) \in N_{\mathcal C}(q,u)$, that \cref{obsass} fails, and that there do not exist $\pi,\rho \in L^2(\Omega)$ satisfying the representation in \cref{Boucon}. The rest of the proof is divided into four steps.

\paragraph{Step 1 (the kernel of the observability operator).} We prove that
\[
    \operatorname{ker}\mathcal O
    =
    \Bigl\{\pi\in L^2(0,1): \pi(1-x)=\pi(x)\ \text{for a.e. }x\in(0,1)\Bigr\}.
\]
From this it follows that \cref{obsass} fails. Let $\pi\in L^2(0,1)$ be arbitrary, and consider its expansion in the Dirichlet eigenbasis
\[
    \pi=\sum_{n=1}^\infty a_n e_n
    \quad\text{in }L^2(0,1),\quad\text{where} \,\, e_n(x):=\sqrt2\sin(n\pi x)\quad \forall n\in\mathbb N.
\]
The solution of the heat equation with initial datum $\pi$ is given by
\[
    \xi(t)=\sum_{n=1}^\infty a_n e^{-n^2\pi^2 t}e_n
    \quad\text{in }L^2(0,1).
\]
Since $\varphi(t,x)=(1-t)g(x)$, we obtain
\[
    \mathcal O(\pi)(t)
    =
    (1-t)\int_0^1 g(x)\,\xi(t,x)\,dx
    =
    (1-t)\sum_{n=1}^\infty a_n \langle g,e_n\rangle_{L^2(0,1)}\,e^{-n^2\pi^2 t}.
\]
Since $g$ is midpoint-odd and $e_{2k-1}$ is midpoint-even, the product $g\,e_{2k-1}$ is
midpoint-odd, and therefore $\langle g,e_{2k-1}\rangle_{L^2(0,1)}=0$ for all $k\in\mathbb N$. Thus we may write
\begin{align}\label{postak}
    \mathcal O(\pi)(t)
    =
    (1-t)\sum_{k=1}^\infty a_{2k}\langle g,e_{2k}\rangle_{L^2(0,1)}\,e^{-4k^2\pi^2 t}.
\end{align}
We first prove that every midpoint-even function belongs to $\ker\mathcal O$.
Assume that $\pi(1-x)=\pi(x)$ for a.e. $x\in(0,1)$. Then $a_{2k}=0$ for all $k\in\mathbb N$. Hence the right-hand side of \eqref{postak} is
identically zero, and so $\mathcal O(\pi)=0$.
Therefore
\[
    \Bigl\{\pi\in L^2(0,1): \pi(1-x)=\pi(x)\ \text{a.e.}\Bigr\}
    \subseteq \operatorname{ker}\mathcal O.
\]
Suppose now that  $\pi\in\ker\mathcal O$. Then the
right-hand side of \eqref{postak} vanishes, i.e.,
\[
    \sum_{k=1}^\infty a_{2k}\langle g,e_{2k}\rangle_{L^2(0,1)}\,e^{-4k^2\pi^2 t}=0
    \qquad \forall t\in(0,1).
\]
By \cref{conlem}, $\langle g,e_{2k}\rangle_{L^2(0,1)}\neq 0$ for all $k\in\mathbb N$, and hence $a_{2k}=0$ for all $k\in\mathbb N$. Therefore the expansion of $\pi$ contains only the odd Dirichlet modes $e_{2k-1}$, which are midpoint-even, hence $\pi$ is itself midpoint-even.
This proves that
\[
    \operatorname{ker}\mathcal O\subseteq\Bigl\{\pi\in L^2(0,1): \pi(1-x)=\pi(x)\ \text{for a.e. }x\in(0,1)\Bigr\}.
\]
Therefore we conclude the desired result. In particular, we see that $\ker\mathcal O\neq\{0\}$; for example, $e_1$ belongs to
$\ker\mathcal O$. Thus $\mathcal O: L^2(\Omega)\to L^2(0,T)$ is not injective.

\paragraph{Step 2 (structure of the tangent cone).} Let $(h,v)\in T_{\mathcal C}(q,u)$ be given. We prove that $v$ is midpoint-odd, i.e., $v(x)=-v(1-x)$ for a.e. $x\in\Omega$.

Let $z,\psi\in W(0,T)$ be the solutions of \eqref{Btclin}. By the representation of the tangent cone,
\[
    v=
    \begin{cases}
        -\min\{\psi(0),0\} & \text{on }\left[\frac18,\frac38\right],\\
        -\max\{\psi(0),0\} & \text{on }\left[\frac58,\frac78\right],\\
        0 & \text{on } [0,1]\setminus\left(\left[\frac18,\frac38\right]\cup\left[\frac58,\frac78\right]\right).
    \end{cases}
\]
Observe that $z(t)=e^{t\Delta}v$ and
\[
    \psi(0)
    =
    \int_0^1 e^{t\Delta}\bigl(z(t)-h(t)\varphi(t)\bigr)\,dt
    =
    \int_0^1 e^{2t\Delta}v\,dt
    -
    \int_0^1 h(t)(1-t)e^{t\Delta}g\,dt.
\]
Define the linear operators $A:L^2(0,1)\to L^2(0,1)$ and
$K:L^2(0,1)\to L^2(0,1)$ by
\[
    Av:=\int_0^1 e^{2t\Delta}v\,dt,
    \qquad
    Kh:=\int_0^1 h(t)(1-t)e^{t\Delta}g\,dt.
\]
Then $\psi(0)=Av-Kh$. Next define $M:L^2(0,1)\to L^2(0,1)$ by
\[
    (Mw)(x)=
    \begin{cases}
        -\min\{w(x),0\} & \text{on } \left[\frac18,\frac38\right],\\
        -\max\{w(x),0\} & \text{on } \left[\frac58,\frac78\right],\\
        0 & \text{on } [0,1]\setminus\left(\left[\frac18,\frac38\right]\cup\left[\frac58,\frac78\right]\right).
    \end{cases}
\]
Then $v=M(Av-Kh)$. Thus $v$ is a fixed point of the map
\[
    F_h:L^2(0,1)\to L^2(0,1),\qquad
    F_h(w):=M(Aw-Kh).
\]
We claim that $F_h$ is a strict contraction. Indeed, $M$ is $1$-Lipschitz,
because it is induced pointwise by scalar maps with Lipschitz constant $1$.
Moreover,
\[
    Ae_n=\alpha_n e_n\quad\text{and}
    \quad
    \alpha_n=\int_0^1 e^{-2n^2\pi^2 t}\,dt
    =\frac{1-e^{-2n^2\pi^2}}{2n^2\pi^2}\quad\forall n\in\mathbb N.
\]
Hence $\|A\|=\sup_{n\in\mathbb N}\alpha_n=(2\pi^2)^{-1}(1-e^{-2\pi^2})<1$. Therefore, for all $w_1,w_2\in L^2(0,1)$,
\[
    \|F_h(w_1)-F_h(w_2)\|_{L^2(0,1)}
    \le
    \|A(w_1-w_2)\|_{L^2(0,1)}
    \le
    \|A\|\,\|w_1-w_2\|_{L^2(0,1)},
\]
and so $F_h$ is a strict contraction. In particular, $F_h$ has a unique fixed
point.

Consider the symmetry operator $S:L^2(0,1)\to L^2(0,1)$ given by $(Sw)(x):=-w(1-x)$.
The condition $Sw=w$ is equivalent to saying that $w$ is midpoint-odd.
Let us now verify that $S$ leaves the fixed-point equation invariant. First, the
Dirichlet heat semigroup commutes with reflection, hence $A$ commutes with $S$, i.e., $S(Aw)=A(Sw)$ for all $w\in L^2(0,1)$.
Second, since $g$ is midpoint-odd and $e^{t\Delta}$ preserves this
symmetry, we have $S(Kh)=Kh$ for all $h\in L^2(0,1)$. Third, one checks directly that $S(Mw)=M(Sw)$ for all $w\in L^2(0,1)$.  Since $v$ is a fixed point of $F_h$, we obtain that $v=M(Av-Kh)$. Hence
\[
    Sv
    =
    S\bigl(M(Av-Kh)\bigr)
    =
    M\bigl(S(Av-Kh)\bigr)
    =
    M\bigl(A(Sv)-Kh\bigr)
    =
    F_h(Sv).
\]
Thus $Sv$ is also a fixed point of $F_h$. By uniqueness of the fixed point, we conclude $Sv=v$. Therefore $v$ is midpoint-odd, which proves the claim.

\paragraph{Step 3 (the point $(h^*,v^*)$ belongs to ${N}_{\mathcal C}(q,u)$).}  By
the previous step, we know that $v$ is midpoint-odd.
Therefore
\[
    \int_0^1 v(x)\,dx
    =
    \int_0^1 v(1-x)\,dx
    =
    -\int_0^1 v(x)\,dx,
\]
and hence $\int_0^1 v(x)\,dx=0$. Since $h^*\equiv0$ and $v^*\equiv 1$, it follows that
\[
    \int_0^1 h^*(t)\,h(t)\,dt+\int_0^1 v^*(x)\,v(x)\,dx
    =
    0+\int_0^1 v(x)\,dx
    =
    0\quad\forall (h,v)\in T_{\mathcal C}(q,u).
\]
Whence the claim follows.

\paragraph{Step 4 (failure of the representation).}
Assume, for contradiction, that there exist
$\pi,\rho\in L^2(0,1)$ and $\xi,\eta\in W(0,1)$ satisfying the representation in \cref{Boucon}. In particular,
\[
    v^*=\rho-\pi-\eta(0)\quad\text{in $L^2(\Omega)$}\quad\text{and}\quad
    h^*(t)=\int_0^1 \varphi(t,x)\,\xi(t,x)\,dx
    \quad\text{for a.e. $t\in(0,T)$}.
\]
Since $h^*\equiv0$, the representation formula yields
\[
    \int_0^1 \varphi(t,x)\,\xi(t,x)\,dx=0
    \quad\text{for a.e. }t\in(0,T).
\]
Combining this with the fact that $y\equiv 0$, we obtain that $\mathcal O(\pi)=0$, and hence, by Step 1, $\pi$ is midpoint-even. On the other hand, by the sign conditions,
\[
    \pi\ge 0 \quad\text{a.e. on }\Bigl[\frac18,\frac38\Bigr],
    \qquad
    \pi\le 0 \quad\text{a.e. on }\Bigl[\frac58,\frac78\Bigr],
    \qquad
    \pi=0 \quad\text{a.e. on }(0,1)\setminus\Bigl(\Bigl[\frac18,\frac38\Bigr]\cup\Bigl[\frac58,\frac78\Bigr]\Bigr).
\]
Since $1-x\in\bigl[\frac58,\frac78\bigr]\) for a.e. \(x\in\bigl[\frac18,\frac38\bigr]\),
\[
    \pi(x)\ge 0
    \qquad\text{and}\qquad
    \pi(x)=\pi(1-x)\le 0\quad\text{for a.e. \(x\in\bigl[\frac18,\frac38\bigr]\).}
\]
Hence $\pi=0$ a.e. on $\Bigl[\frac18,\frac38\Bigr]$. By symmetry, also $\pi=0$ almost everywhere on $\Bigl[\frac58,\frac78\Bigr]$. We conclude that $\pi=0$ a.e. in $\Omega$. It follows that $\xi\equiv0$ and $\eta\equiv0$. Consequently, $v^*=\rho-\pi-\eta(0)=\rho$. Since $v^*\equiv 1$, we obtain $\rho=1$ a.e. in $\Omega$. This contradicts the sign condition $\rho\le0$ on $\Bigl[\frac18,\frac38\Bigr]$. The contradiction
shows that no such quadruple $(\pi,\rho,\xi,\eta)$ exists.

Therefore $(h^*,v^*)\in N_{\mathcal C}(q,u)$ but it does not admit the
representation asserted in \cref{Boucon}. This completes the proof.

\subsection{Proof of Proposition \ref{Boucon2}}
We argue in the spirit of the proof of \cref{Boucon}, but use the
Lebesgue nullity of the contact set in place of the observability condition. Under
this assumption the pointwise tangent relation induced by the soft-thresholding
map becomes linear almost everywhere. This allows us to convert the polar
inequality directly into the stated adjoint representation, together with the
corresponding simplified pointwise conditions.

\subsubsection{A technical lemma}
We first isolate the pointwise consequence of the zero-measure contact set that
will be used to pass from the polar inequality to the multiplier
representation.

\begin{lemma}\label{L1-threshold}
    Let $(q,u)\in\mathcal C$ with associated state--adjoint pair
    $(y,\varphi)\in W(0,T)\times W(0,T)$. For any $v^*\in L^2(\Omega)$ there exist
    $\xi,\eta\in W(0,T)$ and $\pi,\rho\in L^2(\Omega)$ such that
    \[
        \left\{
            \begin{aligned}
                \partial_t\xi-\Delta\xi+q(t)\,\xi &= 0
                && \text{in }Q,\\
                \xi &= 0
                && \text{on }\Sigma,\\
                \xi(0) &= \pi
                && \text{in }\Omega,
            \end{aligned}
        \right.
        \qquad
        \left\{
            \begin{aligned}
                -\partial_t\eta-\Delta\eta+q(t)\,\eta &= \xi
                && \text{in }Q,\\
                \eta &= 0
                && \text{on }\Sigma,\\
                \eta(T) &= 0
                && \text{in }\Omega,
            \end{aligned}
        \right.
    \]
    and $\rho=v^*+\lambda\,\pi+\eta(0)$,  where
    $\pi$ and $\rho$ satisfy
    \[
        \pi=0 \ \text{a.e.\ on } \{|\varphi(0)|<\mu\}\quad \text{and}\quad
        \rho=0 \ \text{a.e.\ on } \{|\varphi(0)|>\mu\}.
    \]
\end{lemma}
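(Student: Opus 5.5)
The plan is to construct $\pi$ by solving a coercive linear problem posed on the set $E:=\{|\varphi(0)|>\mu\}$, and then to define $\rho$ by the formula in the statement. The starting point is the observation already used in the proof of \cref{lem:polar-characterization}. For a given $\pi\in L^2(\Omega)$, let $(\xi,\eta)$ solve the two displayed equations. This system coincides with \eqref{Btclinn} with initial datum $\pi$, so $\eta(0)=A_q\pi$, where $A_q$ is the operator of \cref{al1}. Consequently
\[
\rho:=v^*+\lambda\pi+\eta(0)=v^*+(\lambda I+A_q)\pi ,
\]
and the statement reduces to finding $\pi\in L^2(\Omega)$ such that $\pi=0$ a.e.\ on $\{|\varphi(0)|<\mu\}$ and $(\lambda I+A_q)\pi=-v^*$ a.e.\ on $E$.

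To find such a $\pi$, we restrict the search to functions supported in $E$. We therefore work in the closed subspace
\[
L^2_E:=\{w\in L^2(\Omega):\ w=0 \text{ a.e.\ on }\Omega\setminus E\}.
\]
On this subspace we consider the bilinear form $a(w,r):=\langle(\lambda I+A_q)w,r\rangle_{L^2(\Omega)}$. It is bounded because $A_q$ is bounded, by \cref{al1}~\ref{al1:i}. It is also coercive, since $a(w,w)\ge\lambda\|w\|^2_{L^2(\Omega)}$ by the positivity of $A_q$ in the same lemma. We take the right-hand side to be the functional $r\mapsto-\langle v^*,r\rangle_{L^2(\Omega)}$. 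The Lax--Milgram theorem (the case $K=L^2_E$ of the Lions--Stampacchia argument already used in \cref{lem:Theta-surj}) then yields a unique $\pi\in L^2_E$ with
\[
\langle v^*+(\lambda I+A_q)\pi,\,r\rangle_{L^2(\Omega)}=0\qquad\forall r\in L^2_E .
\]
Testing this identity with $r=\mathbbm 1_E\,w$ for arbitrary $w\in L^2(\Omega)$ shows that $\rho=v^*+(\lambda I+A_q)\pi$ vanishes a.e.\ on $E$.

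Since $\pi\in L^2_E$, it vanishes outside $E$, and in particular on $\{|\varphi(0)|<\mu\}$; it also vanishes on the contact set, which is harmless. With $\xi,\eta\in W(0,T)$ the unique weak solutions of the displayed equations for this $\pi$, all the claimed relations hold.

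There is no real obstacle in this argument. The only points requiring care are the identification $\eta(0)=A_q\pi$ and the localization step, that is, passing from the variational identity on $L^2_E$ to the pointwise statement $\rho=0$ a.e.\ on $E$. Notably, the Lebesgue nullity of the contact set is not needed for the lemma itself; it only enters later, when this lemma is combined with the polar inequality.
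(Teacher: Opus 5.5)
Your proof is correct and is essentially the paper's argument: you apply Lax--Milgram to the coercive form $\lambda I+A_q$ (using $\eta(0)=A_q\pi$) on a closed subspace defined by a support restriction, then localize to get $\rho=0$ on $\{|\varphi(0)|>\mu\}$. The only difference is the subspace. The paper uses the larger space of functions vanishing on $\{|\varphi(0)|<\mu\}$, so its $\pi$ may be nonzero on the contact set while $\rho$ also vanishes there; your $L^2_E$ instead forces $\pi=0$ on the contact set, and either choice gives exactly the stated conclusions.
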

\begin{proof}
    Define the linear operator $A:L^2(\Omega)\to L^2(\Omega)$ by $A(r):=\eta_r(0)$, where $(\xi_r,\eta_r)\in W(0,T)\times W(0,T)$ is the unique solution of
    \[
        \left\{
            \begin{aligned}
                \partial_t\xi_r-\Delta\xi_r+q(t)\,\xi_r &= 0
                && \text{in }Q,\\
                \xi_r &= 0
                && \text{on }\Sigma,\\
                \xi_r(0) &= r
                && \text{in }\Omega,
            \end{aligned}
        \right.
        \qquad
        \left\{
            \begin{aligned}
                -\partial_t\eta_r-\Delta\eta_r+q(t)\,\eta_r &= \xi_r
                && \text{in }Q,\\
                \eta_r &= 0
                && \text{on }\Sigma,\\
                \eta_r(T) &= 0
                && \text{in }\Omega.
            \end{aligned}
        \right.
    \]
    Integration by parts yields $\langle Ar,r\rangle_{L^2(\Omega)}=\int_0^T \|\xi_r(t)\|_{L^2(\Omega)}^2\,dt\ge 0$ for all $r\in L^2(\Omega)$. Define
    \[
        V:=\Bigl\{\,r\in L^2(\Omega):\ r=0 \text{ a.e.\ on }\{|\varphi(0)|<\mu\}\Bigr\}.
    \]
    Notice that $V$ is a closed linear subspace of $L^2(\Omega)$. Consider the bilinear form $a: V\times V\to\mathbb R$ given by
    \[
        a(r,s):=\lambda\,\langle r,s\rangle_{L^2(\Omega)}+\langle Ar,s\rangle_{L^2(\Omega)}.
    \]
    It is bounded, and coercive since
    \[
        a(r,r)=\lambda\|r\|_{L^2(\Omega)}^2+\langle Ar,r\rangle_{L^2(\Omega)}
        \ge \lambda\|r\|_{L^2(\Omega)}^2
        \quad \forall r\in V.
    \]
    Define $\ell\in (L^2(\Omega))^*\subseteq V^*$ by $\ell(s):=-\langle v^*,s\rangle_{L^2(\Omega)}$. By the Lax--Milgram theorem, there exists a unique $r^*\in V$ such that $a(r^*,s) = \ell(s) $ for all $s\in V$,
    i.e.,
    \begin{align}\label{viqyps}
        \langle \lambda r^*+Ar^*+v^*, s\rangle_{L^2(\Omega)}=0 \quad \forall s\in V.
    \end{align}
    Set $(\xi,\eta):=(\xi_{r^*},\eta_{r^*})$ and define
    \[
        \pi:=\xi(0)=r^*\quad\text{and}\quad \rho:=v^*+\eta(0)+\lambda\,\xi(0)=v^*+Ar^*+\lambda r^*.
    \]
    Then $\pi\in V$, and hence $\pi=0 \ \text{a.e.\ on } \{|\varphi(0)|<\mu\}$. Moreover, from \eqref{viqyps}, one can conclude that $\rho\equiv 0$ a.e. on $\{|\varphi(0)|>\mu\}$. Indeed, fix $\zeta\in L^2(\Omega)$ with $\zeta=0$ a.e.\ on $\Omega\setminus\{|\varphi(0)|>\mu\}$.
    For sufficiently small $\varepsilon>0$ we have $\pi\pm \varepsilon \zeta\in V$
    (since $V$ imposes no sign restriction on $\{|\varphi(0)|>\mu\}$). Testing with
    $s=\pi\pm \varepsilon \zeta$ yields $\langle \rho,\zeta\rangle_{L^2(\Omega)}=0$ for all such $\zeta$,
    hence $\rho=0$ a.e.\ on $\{|\varphi(0)|>\mu\}$.
\end{proof}

\subsubsection{Polarity and the normal cone}
Fix $(q,u)\in\mathcal C$ with associated state--adjoint pair
$(y,\varphi)\in W(0,T)\times W(0,T)$.

\paragraph{Step 1 (a mixed identity).}
Let $\xi,\eta\in W(0,T)$ and $\pi\in L^2(\Omega)$ satisfy
\begin{align}\label{twopde}
    \left\{
        \begin{aligned}
            \partial_t\xi-\Delta\xi+q(t)\,\xi &= 0 && \text{in }Q,\\
            \xi &= 0 && \text{on }\Sigma,\\
            \xi(0) &= \pi && \text{in }\Omega,
        \end{aligned}
    \right.
    \qquad
    \left\{
        \begin{aligned}
            -\partial_t\eta-\Delta\eta+q(t)\,\eta &= \xi && \text{in }Q,\\
            \eta &= 0 && \text{on }\Sigma,\\
            \eta(T) &= 0 && \text{in }\Omega.
        \end{aligned}
    \right.
\end{align}
For $(h,v)\in T_{\mathcal C}(q,u)$, let $(z,\psi)$ satisfy \eqref{Btclin}, and define
for a.e.\ $t\in(0,T)$
\[
    H(t):=\int_\Omega\bigl(y(t)\,\eta(t)+\varphi(t)\,\xi(t)\bigr)\,dx .
\]
Testing the $z$--equation with $\eta$ and the $\eta$--equation with $z$,
subtracting and using \eqref{eq:ibp-W} with $\eta(T)=0$ and $z(0)=v$, yields
\[
    \int_0^T h(t)\int_\Omega y(t)\,\eta(t)\,dx\,dt
    =
    \int_\Omega v\,\eta(0)\,dx-\int_0^T\!\!\int_\Omega z\,\xi\,dx\,dt.
\]
Similarly, testing the $\psi$--equation with $\xi$ and the $\xi$--equation with $\psi$,
subtracting and using \eqref{eq:ibp-W} with $\psi(T)=0$ and $\xi(0)=\pi$, gives
\[
    \int_0^T h(t)\int_\Omega \varphi(t)\,\xi(t)\,dx\,dt
    =
    \int_0^T\!\!\int_\Omega z\,\xi\,dx\,dt-\int_\Omega \psi(0)\,\pi\,dx.
\]
Adding the last two identities yields
\begin{equation}\label{eq:mixed-id-Boucon-proof}
    \int_0^T H(t)\,h(t)\,dt
    =
    \int_\Omega v\,\eta(0)\,dx-\int_\Omega \psi(0)\,\pi\,dx.
\end{equation}

\paragraph{Step 2 (the first inclusion).}
Assume $(h^*,v^*)\in N_{\mathcal C}(q,u)$. Applying \cref{L1-threshold},
we obtain the existence of $\xi,\eta\in W(0,T)$ and $\pi,\rho\in L^2(\Omega)$ such
that \eqref{twopde} holds,
\begin{equation}\label{eq:vstar-trace-Boucon-proof}
    v^*=\rho-\lambda\,\pi-\eta(0),
\end{equation}
and $\rho$ and $\pi$ satisfy
\[
    \rho=0 \ \text{a.e.\ on } \{|\varphi(0)|>\mu\}\qquad\text{and}\qquad \pi=0 \ \text{a.e.\ on } \{|\varphi(0)|<\mu\}.
\]
Fix $(h,v)\in T_{\mathcal C}(q,u)$ and let $(z,\psi)$ be its associated pair.
By \eqref{eq:mixed-id-Boucon-proof} we have
\[
    \int_0^T H(t)\,h(t)\,dt=\int_\Omega v\,\eta(0)\,dx-\int_\Omega \psi(0)\,\pi\,dx.
\]
Hence, using \eqref{eq:vstar-trace-Boucon-proof},
\begin{align*}
    \int_0^T (h^*(t)-H(t))\,h(t)\,dt
    &=\int_0^T h^*(t)\,h(t)\,dt-\int_\Omega v\,\eta(0)\,dx+\int_\Omega \psi(0)\,\pi\,dx\\
    &=\Bigl(\int_0^T h^*(t)\,h(t)\,dt+\int_\Omega v^*\,v\,dx\Bigr)
    -\int_\Omega \rho\,v\,dx+\int_\Omega (\psi(0)+\lambda v)\,\pi\,dx.
\end{align*}
Since $(h^*,v^*)\in N_{\mathcal C}(q,u)$, the bracketed term is $\le 0$.
Moreover, the trace relation in \eqref{Btclin} yields
\[
    v=0 \ \text{on } \{|\varphi(0)|<\mu\}\qquad\text{and}\qquad \psi(0)+\lambda v=0 \ \text{on } \{|\varphi(0)|>\mu\}.
\]
Hence, using that $\{|\varphi(0)|=\mu\}$ has null measure, we conclude that
\[-
    \int_\Omega \rho\,v\,dx+\int_\Omega (\psi(0)+\lambda v)\,\pi\,dx = 0.
\]
Therefore,
\begin{equation}\label{eq:hstar-K-ineq-Boucon-proof}
    \int_0^T (h^*(t)-H(t))\,h(t)\,dt\le 0 \quad \forall (h,v)\in  T_{\mathcal C}(q,u).
\end{equation}
By \cref{lem:Theta-surj}, for every $h\in L^2(0,T)$ there exists $v\in L^2(\Omega)$
such that $(h,v)\in T_{\mathcal C}(q,u)$. By \eqref{eq:hstar-K-ineq-Boucon-proof},
\[
    \int_0^T (h^*(t)-H(t))\,h(t)\,dt\le 0 \quad \forall h\in L^2(0,T).
\]
Applying the same inequality to $-h$ gives the reverse inequality, thus
$\int_0^T (h^*(t)-H(t))\,h(t)\,dt=0$ for all $h$, and therefore $h^*=H$ in $L^2(0,T)$, i.e.
\[
    h^*(t)=\int_\Omega \bigl(y(t)\,\eta(t)+\varphi(t)\,\xi(t)\bigr)\,dx
    \quad\text{for a.e.\ }t\in(0,T).
\]
Together with \eqref{eq:vstar-trace-Boucon-proof} this yields the desired representation.

\paragraph{Step 3 (the second inclusion).}
Assume there exist $\xi,\eta\in W(0,T)$ and $\pi,\rho\in L^2(\Omega)$ satisfying
\[
    \left\{
        \begin{aligned}
            \partial_t\xi-\Delta\xi+q(t)\,\xi &= 0 && \text{in }Q,\\
            \xi &= 0 && \text{on }\Sigma,\\
            \xi(0) &= \pi && \text{in }\Omega,
        \end{aligned}
    \right.
    \qquad
    \left\{
        \begin{aligned}
            -\partial_t\eta-\Delta\eta+q(t)\,\eta &= \xi && \text{in }Q,\\
            \eta &= 0 && \text{on }\Sigma,\\
            \eta(T) &= 0 && \text{in }\Omega,
        \end{aligned}
    \right.
\]
such that
\[
    v^*=\rho-\lambda\,\pi-\eta(0)
    \quad\text{and}\quad
    h^*(t)=\int_\Omega \bigl(y(t,x)\,\eta(t,x)+\varphi(t,x)\,\xi(t,x)\bigr)\,dx,
\]
where
\[
    \pi=0 \ \text{a.e.\ on }\{|\varphi(0)|<\mu\}\qquad\text{and}\qquad
    \rho=0 \ \text{a.e.\ on }\{|\varphi(0)|>\mu\}.
\]
and the contact set
has Lebesgue measure zero. We show that $(h^*,v^*)\in N_{\mathcal C}(q,u)$.

Let $(h,v)\in T_{\mathcal C}(q,u)$ be arbitrary and let $(z,\psi)\in W(0,T)\times W(0,T)$
be associated with $(h,v)$ through \eqref{Btclin}.
Using $v^*=\rho-\lambda\pi-\eta(0)$ and the mixed identity from Step 1,  we obtain
\begin{equation}\label{eq:step3-key}
    \int_0^T h^*(t)\,h(t)\,dt+\int_\Omega v^*\,v\,dx
    =\int_\Omega\Bigl[\rho\,v-(\psi(0)+\lambda v)\,\pi\Bigr]\,dx.
\end{equation}
We now analyze the integrand pointwise using \eqref{Btclin}.
On $\{|\varphi(0)|<\mu\}$ we have $v=0$ and $\pi=0$, hence the integrand vanishes.
On $\{|\varphi(0)|>\mu\}$ we have $\rho=0$ and $v=-\psi(0)/\lambda$, hence $\psi(0)+\lambda v=0$ and again
the integrand vanishes. On the contact set $\{|\varphi(0)|=\mu\}$ no pointwise cancellation is needed;
since it has null measure and the integrand belongs to $L^1(\Omega)$ (because
$\rho,v,\psi(0),\pi\in L^2(\Omega)$ and $\Omega$ is bounded), we have
\[
    \int_{\{|\varphi(0)|=\mu\}}\Bigl[\rho\,v-(\psi(0)+\lambda v)\,\pi\Bigr]\,dx=0.
\]
Consequently the right-hand side of \eqref{eq:step3-key} equals $0$, and thus
\[
    \int_0^T h^*(t)\,h(t)\,dt+\int_\Omega v^*\,v\,dx=0
    \quad \forall (h,v)\in T_{\mathcal C}(q,u).
\]
Therefore $(h^*,v^*)\in N_{\mathcal C}(q,u)$, which proves the second inclusion.

\subsection{Proof of Corollary \ref{prop:ClarkeNormal}}
Under the assumption $\{|\varphi(0)|=\mu\}$ has null measure, \cref{Ctc} yields
$\hat T_{\mathcal C}(q,u)=T_{\mathcal C}(q,u)$. Therefore $ \hat N_{\mathcal C}(q,u)=N_{\mathcal C}(q,u)$, which proves the claim.

\subsection{Proof of Theorem \ref{thmindass}}
The proof is based on two separate constructions. In the first one, we construct an element of $\mathcal C$ for which \cref{obsass} holds, while the corresponding contact set has positive Lebesgue measure. In the second one, we construct an element of $\mathcal C$ for which the contact set has zero Lebesgue measure, but the operator in \cref{obsass} has a nontrivial kernel.

\subsubsection{A technical lemma}
We will need a preliminary construction to prove that  \cref{obsass} does not, in general, imply nullity of the contact set.

\begin{lemma}\label{lem:yate}
    Let $\mu>0$. Consider the sequence $(e_k)_{k\in\mathbb N}$ of functions $e_k:(0,1)\to\mathbb R$ defined by $e_k(x):=\sqrt{2}\sin(k\pi x)$.
    There exist $\varphi_0\in H^2(0,1)\cap H_0^1(0,1)$ such that
    \[
        0\le \varphi_0(x)\le \mu \quad\forall x\in(0,1),\qquad
        \varphi_0(x)=\mu \quad\forall x\in(\tfrac15,\tfrac25),\qquad \int_0^1 \varphi_0(x)e_k(x)\,dx\neq 0\quad \forall k\in\mathbb N.
    \]
\end{lemma}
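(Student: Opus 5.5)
I will mimic the construction of \cref{conlem}: start from a smooth bump with the required plateau and then correct it by small perturbations supported away from the plateau, so as to make every sine coefficient nonzero. First, choose $\theta\in C^\infty([0,1])$ with $0\le\theta\le1$, $\theta=1$ on $[\tfrac15,\tfrac25]$, and $\theta=0$ in a neighborhood of $\{0\}$ and of $[\tfrac12,1]$. Set $\varphi^{(0)}:=\tfrac{\mu}{2}\theta$ on $[0,1]$, except that on the plateau we want the value $\mu$. So instead take $\varphi^{(0)}:=\mu\theta$ and pick an open interval $U\subseteq(\tfrac12,1)$ on which $\varphi^{(0)}=0$. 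The perturbations must keep $0\le\varphi_0\le\mu$. Hence they should be nonnegative, or else be added on a region where $\varphi^{(0)}$ lies strictly between $0$ and $\mu$. The simplest fix is to replace $\varphi^{(0)}$ by $\varphi^{(0)}+\tfrac{\mu}{2}\chi$, with $\chi\in C_c^\infty(U)$, $0\le\chi\le1$, and $\chi=1$ on a smaller interval $U'\Subset U$. On $U'$ the function then equals $\mu/2$, strictly inside $(0,\mu)$, with a margin $\delta:=\mu/4$.

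Next, set $X:=C_c^\infty(U')$ and consider the functionals $L_k(r):=\langle r,e_k\rangle_{L^2(0,1)}$ on $X$. They are linearly independent. Indeed, if $\sum_{k\le m}a_kL_k\equiv0$ on $X$, then $F=\sum a_ke_k$ vanishes on $U'$; since $F$ is analytic, $F\equiv0$, and orthonormality gives $a_k=0$. This is simpler than in \cref{conlem}, because no symmetry constraint is needed. Consequently, for each $k$ we can pick $r_k\in X$ with $\langle r_k,e_k\rangle=1$ and $\langle r_k,e_\ell\rangle=0$ for $\ell<k$.

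Then I will choose $\varepsilon_k$ recursively, exactly as before. Put $c_k:=\langle\varphi^{(0)}+\sum_{j<k}\varepsilon_jr_j,e_k\rangle$, and require $\varepsilon_k\ne-c_k$, $|\varepsilon_k|\|r_k\|_{H^2}\le2^{-k}$ and $|\varepsilon_k|\|r_k\|_{L^\infty}\le\delta2^{-k}$. Define $\varphi_0:=\varphi^{(0)}+\sum_k\varepsilon_kr_k$. The series converges absolutely in $H^2$, and $H_0^1$ is closed, so $\varphi_0\in H^2(0,1)\cap H_0^1(0,1)$. Off $U'$ we have $\varphi_0=\varphi^{(0)}$, so the plateau $\varphi_0=\mu$ on $(\tfrac15,\tfrac25)$ and the bounds $0\le\varphi_0\le\mu$ are untouched there. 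On $U'$, $|\varphi_0-\mu/2|\le\delta=\mu/4$, so $\varphi_0\in[\mu/4,3\mu/4]$. Finally, $\langle\varphi_0,e_m\rangle=\langle\varphi^{(0)}+\sum_{k\le m}\varepsilon_kr_k,e_m\rangle=c_m+\varepsilon_m\ne0$, because $\langle r_k,e_m\rangle=0$ for $k>m$.

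The only delicate point is preserving the pointwise bounds $0\le\varphi_0\le\mu$ while perturbing. This is why the corrections are placed on an interval where the base function sits strictly inside $(0,\mu)$ with a uniform margin, and why the uniform summability $\sum|\varepsilon_k|\|r_k\|_\infty\le\delta$ is imposed. Everything else is a direct transcription of the proof of \cref{conlem}.
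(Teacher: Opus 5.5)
Your proof is correct, but it handles the key step differently from the paper. The scaffolding is the same in both arguments. There is a smooth base function equal to $\mu$ on the plateau and equal to $\mu/2$ on an interval disjoint from it. All corrections are supported in that interval and kept below $\mu/4$ in sup norm, so that $0\le\varphi_0\le\mu$ is preserved.

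The paper works in the Banach space $V$ of functions in $H^2(0,1)\cap H_0^1(0,1)$ supported in $\overline J$. It notes that each set $A_k=\{\theta\in V:\ a_k+L_k(\theta)=0\}$ is a proper closed affine hyperplane. It then uses Baire's theorem to pick a single $H^2$-small $\theta\notin\bigcup_k A_k$.

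You instead reuse the triangular recursion of \cref{conlem}. Linear independence of the $L_k$ on $C_c^\infty(U')$, via analyticity of finite sine sums, gives $r_k$ with $\langle r_k,e_k\rangle=1$ and $\langle r_k,e_\ell\rangle=0$ for $\ell<k$. The $\varepsilon_k$ are then fixed one by one so that $\langle\varphi_0,e_m\rangle=c_m+\varepsilon_m\neq0$.

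The Baire route is shorter and needs only $L_k\not\equiv0$ on $V$ for each $k$, not independence of the family. It also shows that a residual set of small perturbations works. Your route is explicit and avoids category arguments, and it lets \cref{conlem} and this lemma rest on a single mechanism.

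One presentational point: your opening sets $\varphi^{(0)}:=\tfrac{\mu}{2}\theta$ and then retracts it. In a write-up, simply define $\varphi^{(0)}:=\mu\theta+\tfrac{\mu}{2}\chi$ from the start.
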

\begin{proof}
    Set $I:=(\frac15,\frac25)$ and $J:=(\frac35,\frac45)$. Since the intervals are compactly
    contained in $(0,1)$ and have disjoint closures, we may choose
    $\psi\in C^\infty([0,1])$ such that
    \[
        0\le \psi\le \mu \quad\text{on }[0,1],\qquad
        \psi=\mu \quad\text{on }I,
    \]
    \[
        \psi=\frac{\mu}{2}
        \quad\text{on an open neighbourhood of }\overline J,
        \qquad
        \psi(0)=\psi(1)=0 .
    \]
    It is not hard to see that  $\psi\in H^2(0,1)\cap H_0^1(0,1)$. Set
    \[
        V:=
        \Bigl\{
            \theta\in H^2(0,1)\cap H_0^1(0,1):
            \operatorname{supp}\theta\subseteq\overline J
        \Bigr\}.
    \]
    Note that $V$ is a closed subspace of $H^2(0,1)\cap H_0^1(0,1)$.  For each $k\in\mathbb N$, define $L_k:V\to \mathbb R$ by
    \[
        L_k(\theta):=\int_0^1 \theta(x)e_k(x)\,dx.
    \]
    It is not hard to see that each  functional $L_k$ is linear, continuous and not
    identically zero. Consider the sequence $(a_k)_{k\in\mathbb N}$ given by $a_k:=\int_0^1 \psi(x)e_k(x)\,dx$. For each $k\in\mathbb N$, define
    \[
        A_k:=\{\theta\in V:\ a_k+L_k(\theta)=0\}.
    \]
    Since $A_k$ is a proper
    closed affine hyperplane in $V$, it therefore has empty interior. By Baire's
    theorem, $V\setminus\bigcup_{k=1}^\infty A_k$ is dense in $V$. Let $C>0$ be such that $\|\theta\|_{L^\infty(0,1)}
    \le C\|\theta\|_{H^2(0,1)}$ for all $\theta\in H^2(0,1)$. Due to density,  we can find $\theta\in V\setminus\bigcup_{k=1}^\infty A_k$ satisfying $\|\theta\|_{H^2(0,1)}<\frac{\mu}{4C}$. Then  $\|\theta\|_{L^\infty(0,1)}<\mu/4$. Define $\varphi_0:=\psi+\theta$. Since $\operatorname{supp}\theta\subseteq\overline J$ and
    $\overline I\cap\overline J=\varnothing$, we have
    \[
        \varphi_0=\psi=\mu
        \quad\text{on }I.
    \]
    Moreover, on $\operatorname{supp}\theta$ we have $\psi=\mu/2$, and therefore
    \[
        \frac{\mu}{4}<\varphi_0<\frac{3\mu}{4}
        \quad\text{on }\operatorname{supp}\theta .
    \]
    Outside $\operatorname{supp}\theta$, one has $\varphi_0=\psi$. Since
    $0\le\psi\le\mu$, this gives $0\le \varphi_0\le\mu$ on $(0,1)$. Also, $\varphi_0\in H^2(0,1)\cap H_0^1(0,1)$. Finally, for every $k\in\mathbb N$,
    \[
        \int_0^1 \varphi_0(x)e_k(x)\,dx
        =
        \int_0^1 \psi(x)e_k(x)\,dx
        +
        \int_0^1 \theta(x)e_k(x)\,dx
        =
        a_k+L_k(\theta).
    \]
    Since $\theta\notin A_k$ for every $k\in\mathbb N$, the last quantity is
    nonzero for every $k\in\mathbb N$. This proves the lemma.
\end{proof}

\subsubsection{Assumption \ref{obsass} does not imply nullity of the contact set}
Set \(\Omega=(0,1)\) and $T=1$. Consider the sequence $(e_k)_{k\in\mathbb N}$ of functions $e_k:(0,1)\to\mathbb R$ defined by $e_k(x):=\sqrt{2}\sin(k\pi x)$. This is the eigenbasis of the Dirichlet Laplacian on $\Omega$, hence  \((e_k)_{k\in\mathbb N}\) is an orthonormal basis of \(L^2(0,1)\) and
\[
    -e_k''=k^2\pi^2 e_k,\qquad e_k(0)=e_k(1)=0.
\]
Let \(\varphi_0\in H^2(0,1)\cap H_0^1(0,1)\) be given by \cref{lem:yate}. In
particular,
\[
    0\le \varphi_0\le \mu \ \text{on }[0,1],\quad
    \varphi_0=\mu \ \text{on }\Bigl(\tfrac15,\tfrac25\Bigr),\quad
    c_k:=\int_0^1 \varphi_0(x)e_k(x)\,dx\neq 0\ \forall k\in\mathbb{N}.
\]
We set $ q\equiv0$, $u\equiv0$, $y\equiv0$, and define $\varphi\in W(0,T)$ by $\varphi(t,x):=\Bigl(1-\frac{t}{T}\Bigr)\varphi_0(x)$. Define $y_d\in L^2(Q)$ by
\[
    y_d(t,x):=\partial_t\varphi(t,x)+\partial_{xx}\varphi(t,x)
    =
    -\frac1T\varphi_0(x)+\Bigl(1-\frac{t}{T}\Bigr)\varphi_0''(x).
\]
It is not hard to check that $(q,u)\in L^2(0,T)\times L^2(\Omega)$ belongs to $\mathcal C$. We now show that \cref{obsass} holds. Let
\(\pi\in L^2(0,1)\) be arbitrary satisfying $\mathcal O(\pi)=0$. Writing  $\pi_k:=\int_0^1 \pi(x)e_k(x)\,dx$ for $k\in\mathbb N$,  we can write the solution of the forward linearized equation as
\[
    \xi(t)=\sum_{k=1}^\infty \pi_k e^{-k^2\pi^2 t}e_k
    \quad\text{for a.e $t\in(0,T)$}.
\]
Because \(y=0\), we have that
\[
    0=\mathcal O(\pi)(t)
    =
    \int_0^1 \varphi(t,x)\xi(t,x)\,dx
    =
    \Bigl(1-\frac{t}{T}\Bigr)
    \sum_{k=1}^\infty c_k\pi_k e^{-k^2\pi^2 t}
\]
for a.e. \(t\in(0,T)\). Since
\(1-t/T\neq0\) for \(t\in(0,T)\), it follows that
\[
    \sum_{k=1}^\infty c_k\pi_k e^{-k^2\pi^2 t}=0
    \quad\text{for a.e. }t\in(0,T).
\]
It follows that \(c_k\pi_k=0\) for every \(k\in\mathbb N\); whence we conclude that $\pi=0$. Thus \(\mathcal O\) is injective, and \cref{obsass} holds. At the same
time, the contact set has positive Lebesgue measure. This proves that
\cref{obsass} does not imply that the contact set has Lebesgue measure zero.

\subsubsection{The nullity of the contact set does not imply Assumption \ref{obsass}.}
Set \(\Omega=(0,1)\) and $T=1$. Consider again the sequence $(e_k)_{k\in\mathbb N}$ of functions $e_k:(0,1)\to\mathbb R$ defined by $e_k(x):=\sqrt{2}\sin(k\pi x)$. Choose \(c\neq0\) such that
\[
    |c|\,\|e_1\|_{L^\infty(0,1)}<\mu .
\]
We set $ q\equiv0$, $u\equiv0$, $y\equiv0$, and define $\varphi\in W(0,T)$ by $\varphi(t,x):=c(1-t)e^{-\pi^2t}e_1(x)$. Define also $y_d\in L^2(Q)$ by $y_d:=\partial_t\varphi+\partial_{xx}\varphi$. It is not hard to check that $(q,u)\in L^2(0,T)\times L^2(\Omega)$ belongs to $\mathcal C$ and that $\{|\varphi(0)|=\mu\}=\emptyset$. We now show that \cref{obsass} fails. The solution of the forward linearized
equation with initial condition $e_2$ is given by
\[
    \xi(t)=\sum_{k\ge1}\langle e_2,e_k\rangle_{L^2(0,1)} e^{-k^2\pi^2t}e_k = e^{-4\pi^2t} e_2.
\]
Since \(y=0\), we have
\[
    \mathcal O(\pi)(t)
    =
    \int_0^1 \varphi(t,x)\xi(t,x)\,dx
    =
    c(1-t)e^{-\pi^2t}\int_0^1 e_1(x)\xi(t,x)\,dx
    =  c(1-t)e^{-\pi^2t}e^{-4\pi^2t}\langle e_1,e_2 \rangle_{L^2(0,1)} = 0.
\]
Thus every \(e_2\) belongs to \(\ker\mathcal O\), therefore the operator \(\mathcal O\) is not injective.
Since the contact set has zero Lebesgue measure (it is empty), while
\cref{obsass} fails, this completes the proof.

\subsection{Proof of Theorems \ref{thm:nec-F} and \ref{thm:nec-FN}}
Consider  the function $\Psi:L^2(0,T)\times L^2(\Omega)\to\mathbb R$ given by $\Psi(q,u):=\frac12\|u-u_d\|_{L^2(\Omega)}^2+\frac{\alpha}{2}\|q\|_{L^2(0,T)}^2$. Then $\Psi$ is $C^1$ on $L^2(0,T)\times L^2(\Omega)$ and $\nabla \Psi(q,u)=(\alpha q,\ u-u_d)$. Now, let $\bar q\in L^2(0,T)$ be a local minimizer of the bilevel problem \eqref{cost}--\eqref{Bip}, and set
$\bar u:=u_{\bar q}$. Since \(\mathcal C=\operatorname{gph}(q\mapsto u_q)\), the local minimality of
\(\bar q\) for the bilevel problem implies that \((\bar q,\bar u)\) is a local
minimizer of $\Psi$ over $\mathcal C$. The first order necessary condition then reads
\[
    \big\langle(\alpha \bar q,\ \bar u-u_d), (h,v) \big\rangle_{L^2(0,T)\times L^2(\Omega)}
    =
    \langle \alpha \bar q,h\rangle_{L^2(0,T)}
    +\langle \bar u-u_d,v\rangle_{L^2(\Omega)}
    \ge0
    \quad \forall(h,v)\in T_{\mathcal C}(\bar q,\bar u).
\]
By definition of the normal cone, this can be written as $(-\alpha \bar q,\ u_d-\bar u) \in N_{\mathcal C}(\bar q,\bar u)$. The result follows then from \cref{Boucon}.

The proof of Theorem \ref{thm:nec-FN} follows by the same argument, replacing
\cref{Boucon} with \cref{Boucon2}.

\section{Numerical example}\label{sec:numerics}

In this section, we illustrate the derived optimality conditions as well as the applicability of the bilevel approach to experiment design for inverse problems with a numerical example, where we compute stationary controls using a projected gradient method.

\subsection{Projected reduced gradient descent with backtracking}\label{sec:gradient}

The algorithmic approach considered in this work is directly motivated by the optimality systems in \cref{thm:nec-F,thm:nec-FN}: given a candidate control $q$, one evaluates the associated state, adjoint, and linearized variables, and then updates
$q$ using the gradient relation in \eqref{opt:upper}. \Cref{alg:pgd} describes this process.

\begin{algorithm}[ht]
    \caption{Projected reduced gradient method with Armijo line search for problem \eqref{Bip}.}\label{alg:pgd}
    \label{alg:bilevel:pgd}
    \begin{algorithmic}[1]
        \Require Upper level objective  functional $\mathcal F:L^2(0,T)\to \mathbb R$, feasible set $Q \subseteq L^2(0,T)$, initial iterate $q^0 \in q$, Armijo parameters $c,\beta \in (0,1)$, initial trial step size $\tau_0>0$
        \For{$k\in\N$}
        \State Solve the lower-level system \eqref{opt:lower} at $q^k$ and obtain $(y^k,\varphi^k,u^k)$
        \State Define the active sets
        \(
        A_{\le}^k \defeq \{\abs{\varphi^k(0)} \le \mu\}
        \)
        and
        \(
        A_>^k \defeq \{\abs{\varphi^k(0)} > \mu\}
        \)
        \State Solve the upper-level system \eqref{opt:upper} for $(\xi^k,\eta^k)$ with $\lambda \pi^k = u^k - u_d - \eta^k(0)$ on $A_>^k$ and $\pi^k = 0$ on $A_\le^k$.
        \State Compute the gradient
        \[
            g^k(t) \defeq \alpha q^k(t)
            + \int_\Omega \bigl(y^k\eta^k + \varphi^k\xi^k\bigr)(t,x)\,dx
        \]
        \State Choose $\tau_k = \tau_0\beta^{m_k}$ with the smallest $m_k\in\N$ such that
        \[
            \mathcal F\bigl(\proj_Q(q^k-\tau_k g^k)\bigr)
            \le
            \mathcal F(q^k) - c\tau_k \norm{g^k}_{L^2(0,T)}^2.
        \]
        \State Set
        \[
            q^{k+1} \defeq \proj_Q(q^k-\tau_k g^k)
        \]
        \EndFor
    \end{algorithmic}
\end{algorithm}

Although the problem is nonsmooth due to the presence of the proximal mapping and the associated complementarity conditions, the structure of the optimality system indicates that, locally, the problem behaves like a smooth one whenever the active and inactive remain unchanged and the contact sets is empty. In such regime, the standard gradient-based methods are therefore expected to yield descent and convergence behavior comparable to the smooth setting.

\subsection{Setting}

We test the proposed framework in a setting where the goal is to learn the control $q$ that improves the reconstruction of parameter $u$ across multiple instances of the lower level problem. The control is treated as a model parameter and calibrated from a collection of datasets. This setup allows us to assess how effectively the bilevel formulation learns a control that consistently enhances reconstruction quality.

We consider $N$ datasets $\{u_d^k\}_{k=1}^N$, each associated with noisy observations of the corresponding state. For each $k$, the lower-level problem reads
\begin{equation}\label{cost:mod}
    \min_{u^k \in L^2(\Omega)}
    \left\{
        \frac12 \int_{T_0}^{T}\int_{\Omega} |y^k - e^k|^{2}\,dx\,dt
        +\frac{\lambda}{2}\int_{\Omega}|u^k|^{2}\,dx
        +\mu\int_{\Omega}|u^k|\,dx
    \right\},
\end{equation}
subject to
\begin{equation}\label{system:mod}
    \left\{
        \begin{aligned}
            \partial_t y - \Delta y + q(t)\,y &= 0 && \text{in } Q,\\
            y &= 0 && \text{on } \Sigma,\\
            y(0) &= u^k - u_d^k && \text{in } \Omega.
        \end{aligned}
    \right.
\end{equation}
Since $y(0) = u^k - u_d^k$, due to the linearity of the state equation the corresponding state can be written as $y^k = y_u^k - y_{u_d}^k$ where $y_u^k$ and $y_{u_d}^k$ denote the solutions of \eqref{system:mod} with initial data $u_k$ and $u^k_d$, respectively. Consequently, the data misfit term can be interpreted as $y^k - e^k= y_u^k - ( y_{u_d}^k +  e^k)$, so that the lower-level problem seeks a regularized reconstruction $u^k$ whose forward response matches noisy observations generated under the model parameter $q$.

The upper-level problem determines a control $q$ that performs well across all datasets:
\begin{equation}
    \mathcal F(q)
    \defeq
    \sum_{k=1}^N
    \frac{1}{2c_k}
    \int_\Omega |u_q^k(x) - u_d^k(x)|^2 \, dx
    + \frac{\alpha}{2} \int_0^T |q(t)|^2 \, dt
    + \iota_{[q_m, q_M]}(q),
\end{equation}
where $c_k \defeq \abs{\Omega}^{-1} \int_\Omega |u_d^k(x)|^2 \, dx$, so that each term corresponds to an $\abs{\Omega}^{-1}$ weighted relative reconstruction error and the additional control constraints. The final term is a pointwise almost everywhere constraint on the control that serves to exclude singular ``unphysical'' controls in this specific example.

In the experiments, we take $T_0 = 2 \cdot 10^{-2}$, $T = 10^{-1}$, $\Omega = [0,1]^2$, $\lambda = 10^{-5}$, $\mu = \tfrac{1}{4}\cdot 10^{-4}$, $N = 8$, $\alpha = 10^{-7}$, and $[q_m,q_M] = [-350,350]$.
We use $8$ datasets for training and reserve $4$ additional datasets for validation (see the first columns of \cref{fig:training-1-4,fig:training-5-8,fig:valid-1-4}).

The optimality systems are discretized using a Galerkin finite element method in spatial domain. The lower-level problem is fully mass-lumped to simplify computations, while the upper-level problem retains the mass matrix. Time discretization is performed using forward differences, and $q$ is represented as a piecewise constant function.

Let $J\in\mathbb N$ denote the number of time steps and $J_0\in\{0,\dots,J-1\}$ the index corresponding to $T_0$. We define
$
\tau \defeq \frac{T}{J},
$
$
t_j \defeq j\tau,
$
$
j=0,\dots,J,
$
so that
$
T_0 = J_0\tau,
$
$
T = J\tau.
$
The observation interval \([T_0,T]\) is thus represented by the index range \(j=J_0,\dots,J-1\) and the control \(q\) is constant on each interval \([t_j,t_{j+1})\).

Writing $R$ for the stiffness matrix, $M$ for the mass matrix, $W$ for the diagonal mass-lumped matrix, and defining
$
A_j
\defeq
M + \tau \bigl(R + q_j M \bigr),
$
the discrete lower level optimality system reads
\begin{align*}
    &
    \left\{\begin{aligned}
            y^k_0
            &
            = u^k- u^k_d,
            \\
            A_jy^k_{j+1}
            &
            = M y^k_j,
            \quad
            j=0,\ldots,J-1,
    \end{aligned}\right.
    \\
    &
    \left\{\begin{aligned}
            \varphi^k_{J}
            &
            = 0,
            \\
            A_j^\top \varphi^k_j
            &
            = M \varphi^k_{j+1}
            +
            \tau W\bigl(y^k_{j+1} - e^k_{j+1}\bigr),
            \quad j=J_0,\ldots,J-1,\\
            A_j^\top \varphi^k_j
            &
            = M \varphi^k_{j+1},
            \quad j=0,\ldots,J_0-1,
    \end{aligned}\right.
\end{align*}
for $u^k$ satisfying $0 \in \varphi^k_0 + \lambda u^k + \mu \partial \abs{u^k}$.
The discrete upper level system reads
\begin{align*}
    &
    \left\{\begin{aligned}
            \xi^k_0
            &
            = \pi^k,\\
            A_j\xi^k_{j+1}
            &
            = M\xi^k_j,
            \quad j=0,\ldots,J-1,
    \end{aligned}\right.
    \\
    &
    \left\{\begin{aligned}
            \eta^k_J &= 0,
            \\
            A_j^\top \eta^k_j
            &
            =
            M\eta^k_{j+1} + \tau\,M\xi^k_{j+1},
            \quad j=0,\ldots,J-1,
    \end{aligned}\right.
\end{align*}
where $\pi^k$ satisfying $\lambda \pi^k + \eta^k_0 = u^k-u^k_d$ pointwise at all nodes $i$ with $\abs{[u^k]_i} > \epsilon$ for some $\epsilon > 0$,
and $\pi^k = 0$ for all remaining nodes. Here, the condition for the active set is relaxed to account for numerical inaccuracies; for $\epsilon = 0$, the condition $\abs{[u^k]_i} > \epsilon$ is equivalent to $\abs{[\varphi^k_0]_i} > \mu$ at optimality.
The discretized gradient condition reads
\[
    g_j
    =
    \alpha\tau\,q_j
    +
    \tau
    \sum_{k=1}^N
    \bigl(
        (\eta^k_j)^\top M y^k_{j+1}
        +
        (\varphi^k_j)^\top M\xi^k_{j+1}
    \bigr) \in - N_{[q_m, q_M]}(q_j),
\]
and $q_j \in [q_m, q_M]$, for $j=0,\ldots,J-1$. Here $N_{[q_m, q_M]}(q_j)$ is the normal cone to the interval $[q_m, q_M]$ at $q_j$.

We employ a mesh with $2500$ nodes and $50$ time steps. The noise $e^k$ is sampled independently at each time step from a Gaussian distribution with zero mean and with standard deviation $0.01$.

The lower-level problem is solved using Fast Iterative Soft-Thresholding (FISTA) \cite{BeckTeboulle} with step size $0.98/L_y$, where $L_y$ is an estimate of the Lipschitz constant of lower-level data term obtained via power iteration. The FISTA acceleration is reset every $\sqrt{L_y/\mu}$ iterations. The stopping criterion is $
\norm{u^k - \prox_{(\mu/\lambda)\abs{\cdot}}\left(-\frac1\lambda \varphi^k_0\right)}_2 \le 10^{-7}$, with a maximum of $25{,}000$ iterations.

We use tolerance $\epsilon = 10^{-10}$ for the active set identification. The upper-level adjoint problem is solved using the conjugate gradient method to accuracy $10^{-8}$, with at most $25{,}000$ iterations. The outer optimization employs Armijo backtracking with parameters $c = 10^{-2}$, $\beta = 10^{-1}$, and $\tau_0 = 25/h$, where $h$ is the scale of the discretization.

All experiments were implemented in Python using \texttt{NumPy} and \texttt{SciPy}, with meshes generated via \texttt{FEniCSx}. The source code is available on Zenodo \cite{dominguez_corella_2026_20364570}.

\subsection{Results}

\newcommand{\ResultPath}{results}
\newcommand{\CachePath}{results/figures}
\newcommand{\NumTargets}{12}

\newcommand{\GlobalPlotWidth}{0.82\textwidth}

\newcommand{\ReconImageWidth}{0.30\textwidth}
\newcommand{\ReconColorbarWidth}{0.045\textwidth}

\newcommand{\TargetDir}[1]{\ResultPath/target_i#1}

\newcommand{\IncludeGlobalQ}{%
    \includegraphics[width=\GlobalPlotWidth]{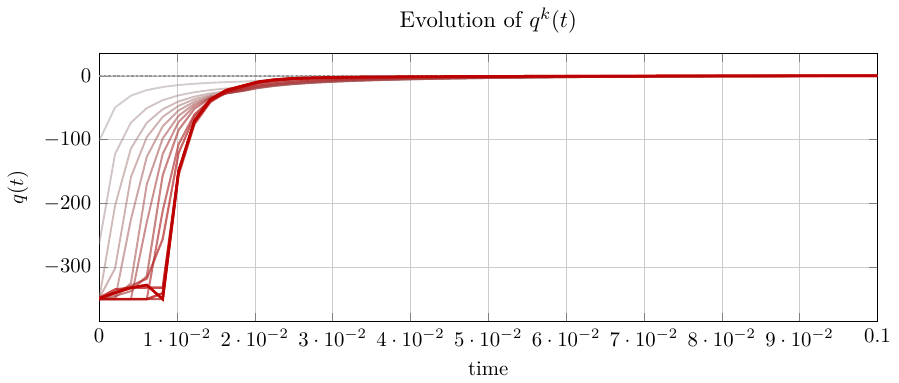}%
}

\newcommand{\IncludeGlobalObjective}{%
    \includegraphics[width=\GlobalPlotWidth]{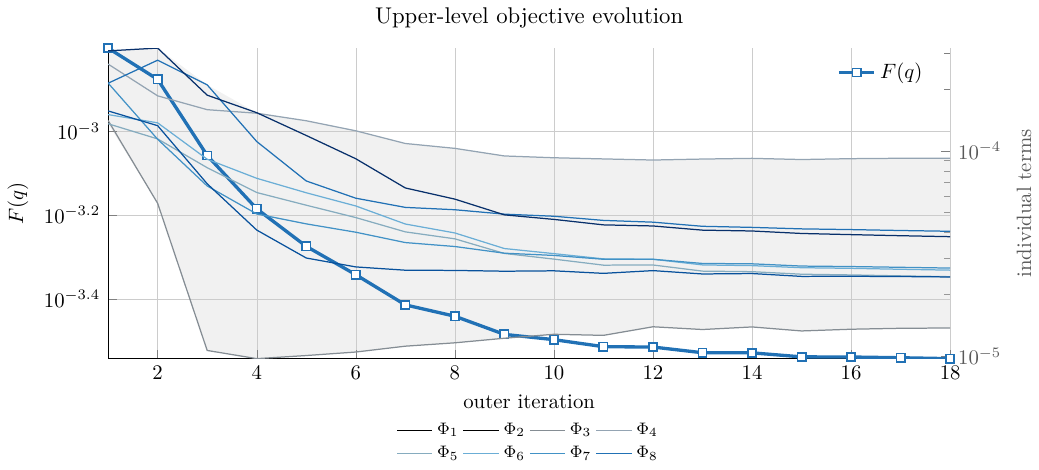}%
}

\pgfplotstableread[col sep=space]{\detokenize{results/train_relative_errors.txt}}\TrainTable
\pgfplotstableread[col sep=space]{\detokenize{results/validation_relative_errors.txt}}\ValTable

\newcommand{\TrainStr}[2]{%
    \pgfplotstablegetelem{#1}{#2}\of{\TrainTable}%
    \pgfplotsretval%
}
\newcommand{\ValStr}[2]{%
    \pgfplotstablegetelem{#1}{#2}\of{\ValTable}%
    \pgfplotsretval%
}
\newcommand{\TrainNum}[2]{%
    \pgfplotstablegetelem{#1}{#2}\of{\TrainTable}%
    \pgfmathprintnumber[fixed,precision=2,fixed zerofill]{\pgfplotsretval}%
}
\newcommand{\ValNum}[2]{%
    \pgfplotstablegetelem{#1}{#2}\of{\ValTable}%
    \pgfmathprintnumber[fixed,precision=2,fixed zerofill]{\pgfplotsretval}%
}

\newcommand{\ErrorTable}[1]{%
    \pgfplotstabletypeset[
    col sep=space,
    header=false,
    display columns/0/.style={
        string type,
        column name={Dataset},
        column type={r},
    },
    display columns/1/.style={
        column name={Initial q},
        fixed,
        precision=2,
        column type={r},
    },
    display columns/2/.style={
        column name={Final q},
        fixed,
        precision=2,
        column type={r},
    },
    display columns/3/.style={
        column name={Relative},
        fixed,
        precision=2,
        column type={r},
    },
    every head row/.style={
        before row=\toprule,
        after row=\midrule,
    },
    every last row/.style={
        after row=\bottomrule,
    }
    ]{#1}%
}

\Cref{fig:obj-q-sidebyside} shows the evolution of the objective value and the recovered control. The method reaches approximate stationarity after $18$ outer iterations. At the last iterate, we obtained $\norm{g}_{L^2(0,T)} = 2.318 \cdot 10^{-7}$.

\Cref{fig:training-1-4,fig:training-5-8,fig:valid-1-4} collect the reconstruction results. Each row shows the true state, the reconstruction obtained with the initial control $q_{q=0} \equiv 0$, and the reconstruction obtained with the optimized control $q_{\text{opt}}$. Visually, the optimized control improves the reconstruction quality in both the training dataset (\cref{fig:training-1-4,fig:training-5-8}) and in the validation dataset (\cref{fig:valid-1-4}). \Cref{tab:rel-all-low} reports the relative errors for both datasets, confirming the improvement we observed visually.

\newcommand{\TikzCachePath}{results/figures}
\newcommand{\NumTargetsTrain}{8}
\newcommand{\NumTargetsVal}{4}

\newcommand{\FieldPdfWidth}{0.27\textwidth}
\newcommand{\ColorbarPdfWidth}{0.075\textwidth}
\newcommand{\GlobalPdfWidth}{0.88\textwidth}
\newcommand{\TrainTargetDir}[1]{\ResultPath/train_compare/target_i#1}
\newcommand{\ValTargetDir}[1]{\ResultPath/validation/target_i#1}

\newcommand{\IncludeCachedPdf}[2][]{%
    \IfFileExists{\TikzCachePath/#2.pdf}{%
        \includegraphics[#1]{\TikzCachePath/#2.pdf}%
    }{%
        \fbox{\parbox[c][3cm][c]{0.8\linewidth}{\centering Missing file:\\ \texttt{\TikzCachePath/#2.pdf}}}%
    }%
}

\begin{figure}[t]
    \centering
    \begin{minipage}[t]{0.46\textwidth}
        \centering
        \IncludeCachedPdf[width=\textwidth]{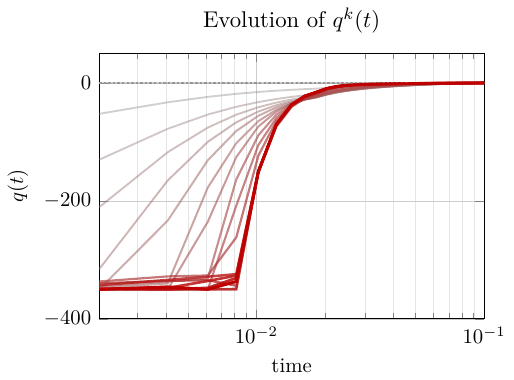}
    \end{minipage}
    \hfill
    \begin{minipage}[t]{0.534\textwidth}
        \centering
        \IncludeCachedPdf[width=\textwidth]{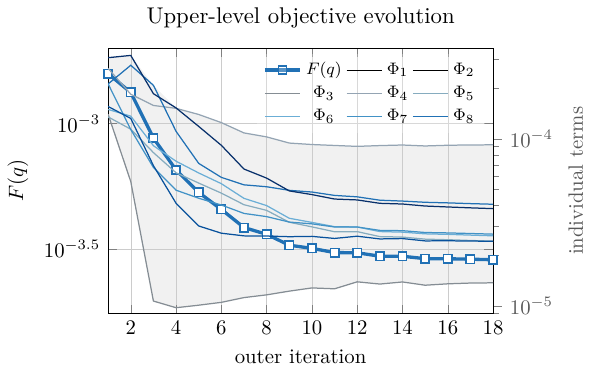}
    \end{minipage}
    \caption{Left: evolution of the upper-level iterates $q^k$. The color transitions from gray at the initial iterate to red at the final iterate. Right: convergence history of the full upper-level objective and of the individual per-dataset contributions. The shaded band indicates the minimum and maximum component values at each outer iteration.}
    \label{fig:obj-q-sidebyside}
\end{figure}

\begin{table}[b]
    \centering
    \caption{Baseline vs. learned relative reconstruction error for training and validation datasets.}
    \label{tab:rel-all-low}
    \begin{tabular}{cccccccc}
        \toprule
        \multicolumn{4}{c}{Training} & \multicolumn{4}{c}{Validation} \\
        \cmidrule(lr){1-4}\cmidrule(lr){5-8}
        Dataset & Initial $q$ & Final $q$ & Initial $q$/Final $q$ & Dataset & Initial $q$ & Final $q$ & Initial $q$/Final $q$ \\
        \midrule
        \liningnums{1} & \TrainNum{0}{1} & \TrainNum{0}{2} & \TrainNum{0}{3}
        & \liningnums{1} & \ValNum{0}{1} & \ValNum{0}{2} & \ValNum{0}{3} \\

        \liningnums{2} & \TrainNum{1}{1} & \TrainNum{1}{2} & \TrainNum{1}{3}
        & \liningnums{2} & \ValNum{1}{1} & \ValNum{1}{2} & \ValNum{1}{3} \\

        \liningnums{3} & \TrainNum{2}{1} & \TrainNum{2}{2} & \TrainNum{2}{3}
        & \liningnums{3} & \ValNum{2}{1} & \ValNum{2}{2} & \ValNum{2}{3} \\

        \liningnums{4} & \TrainNum{3}{1} & \TrainNum{3}{2} & \TrainNum{3}{3}
        & \liningnums{4} & \ValNum{3}{1} & \ValNum{3}{2} & \ValNum{3}{3} \\

        \liningnums{5} & \TrainNum{4}{1} & \TrainNum{4}{2} & \TrainNum{4}{3}
        &   &               &               &               \\

        \liningnums{6} & \TrainNum{5}{1} & \TrainNum{5}{2} & \TrainNum{5}{3}
        &   &               &               &               \\

        \liningnums{7} & \TrainNum{6}{1} & \TrainNum{6}{2} & \TrainNum{6}{3}
        &   &               &               &               \\

        \liningnums{8} & \TrainNum{7}{1} & \TrainNum{7}{2} & \TrainNum{7}{3}
        &   &               &               &               \\
        \bottomrule
    \end{tabular}
\end{table}

\begin{figure}[htp]
    \foreach \ii in {0,...,3} {

        \ifnum\ii<10
            \edef\TargetIndex{0\ii}
        \else
            \edef\TargetIndex{\ii}
        \fi

        \centering
        \makebox[\textwidth][c]{%
            \IncludeCachedPdf[width=\FieldPdfWidth]{train-target-\TargetIndex-utrue}%
            \hspace{0.01\textwidth}%
            \IncludeCachedPdf[width=\FieldPdfWidth]{train-target-\TargetIndex-uinit}%
            \hspace{0.01\textwidth}%
            \IncludeCachedPdf[width=\FieldPdfWidth]{train-target-\TargetIndex-ufinal}%
            \hspace{0.005\textwidth}%
            \IncludeCachedPdf[width=\ColorbarPdfWidth]{train-target-\TargetIndex-colorbar}%
        }
    }
    \caption{Training dataset from set 1 to 4: ground truth, baseline reconstruction, and reconstruction with optimized control.}\label{fig:training-1-4}
\end{figure}

\begin{figure}[htp]
    \foreach \ii in {4,...,\numexpr\NumTargetsTrain-1\relax} {

        \ifnum\ii<10
            \edef\TargetIndex{0\ii}
        \else
            \edef\TargetIndex{\ii}
        \fi

        \centering
        \makebox[\textwidth][c]{%
            \IncludeCachedPdf[width=\FieldPdfWidth]{train-target-\TargetIndex-utrue}%
            \hspace{0.01\textwidth}%
            \IncludeCachedPdf[width=\FieldPdfWidth]{train-target-\TargetIndex-uinit}%
            \hspace{0.01\textwidth}%
            \IncludeCachedPdf[width=\FieldPdfWidth]{train-target-\TargetIndex-ufinal}%
            \hspace{0.005\textwidth}%
            \IncludeCachedPdf[width=\ColorbarPdfWidth]{train-target-\TargetIndex-colorbar}%
        }
    }
    \caption{Training dataset from set 5 to 8: ground truth, baseline reconstruction, and reconstruction with optimized control.}\label{fig:training-5-8}
\end{figure}

\begin{figure}[htp]
    \foreach \ii in {0,...,\numexpr\NumTargetsVal-1\relax} {

        \ifnum\ii<10
            \edef\TargetIndex{0\ii}
        \else
            \edef\TargetIndex{\ii}
        \fi

        \centering
        \makebox[\textwidth][c]{%
            \IncludeCachedPdf[width=\FieldPdfWidth]{val-target-\TargetIndex-utrue}%
            \hspace{0.01\textwidth}%
            \IncludeCachedPdf[width=\FieldPdfWidth]{val-target-\TargetIndex-uinit}%
            \hspace{0.01\textwidth}%
            \IncludeCachedPdf[width=\FieldPdfWidth]{val-target-\TargetIndex-ufinal}%
            \hspace{0.005\textwidth}%
            \IncludeCachedPdf[width=\ColorbarPdfWidth]{val-target-\TargetIndex-colorbar}%
        }
    }
    \caption{Validation datasets from 1 to 4: ground truth, baseline reconstruction, and reconstruction with optimized control.}\label{fig:valid-1-4}
\end{figure}

\section{Conclusion}

We have formulated optimal experiment design for sparse initial-value recovery in a bilinear parabolic partial differential equation as a nonsmooth bilevel optimal control problem. By lifting the lower-level problem to an upper-level constraint set, we have derived necessary optimality conditions in qualified form under a novel constraint qualification that amounts to an observability condition for a coupled adjoint state system. Crucially, this condition neither requires nor implies that the nonsmoothness in the lower-level problem is only active on a set of Lebesgue measure zero. A numerical example using a reduced gradient method derived from this approach illustrates the benefits of the approach for a simple model problem.

This work can be extended in a number of directions. First, it is relatively straightforward to apply our approach to other state equations including the Bloch--Torrey equations, which would have applications to optimal pulse sequence design in MRI \cite{Aigner:2015,Aigner:2018}. Of interest would also be to consider other (space-dependent) parameters in the state equation (right-hand sides, boundary conditions, coefficients), which however require significant adaptations to the observability condition. While our results generalize in a direct way to other nonsmooth lower-level regularization terms that act pointwise (e.g., to promote discrete-valued reconstructions \cite{Clason:2021}), treating the popular total variation regularization \cite{RudinOsherFatemi1992} is significantly more challenging since the soft-thresholding relation that enters the multiplier system has to be replaced by a relation involving (distributional) differential operators, and some of the multipliers will only be proper measures.
Finally, follow-up work will treat in more detail the numerical solution of the bilevel optimal control problem and study the solvability of the optimality conditions via semismooth Newton methods \cite{Ulbrich:2011}.

\section*{AI Declaration}

ChatGPT 5.2--5.4 was used for editing, checking for grammar and typos, and formatting of the text.
It was also used for debugging and improving the plotting and output of the code, which was further cleaned up with ChatGPT 5.5 for upload to Zenodo. The codes have been audited manually.

\bibliography{citations}

\end{document}